\newtheorem{theorem}{Theorem}[section]
\newtheorem{lemma}[theorem]{Lemma}
\newtheorem{proposition}[theorem]{Proposition}
\newtheorem{corollary}[theorem]{Corollary}
\theoremstyle{definition}
\newtheorem{definition}[theorem]{Definition}
\newtheorem{ex}[theorem]{Example}
\newtheorem{remark}[theorem]{Remark}
\newtheorem{algo}[theorem]{Algorithm}
\theoremstyle{remark}
\numberwithin{equation}{section}
\newskip\aline \newskip\halfaline
\def\skipaline{\vskip\aline}
\def\qedbox{$\rlap{$\sqcap$}\sqcup$}
\def\qed{\nobreak\hfill\penalty250 \hbox{}\nobreak\hfill\qedbox\skipaline}
\newcommand\bR{{\mathbb R}}
\newcommand\bZ{{\mathbb Z}}
\newcommand{\bse}{{\boldsymbol{e}}}
\newcommand{\bsS}{\boldsymbol{S}}
\newcommand{\be}{\boldsymbol{e}}
\newcommand{\bh}{\boldsymbol{h}}
\newcommand{\bn}{{\boldsymbol{n}}}
\newcommand{\bp}{\boldsymbol{p}}
\newcommand{\bq}{\boldsymbol{q}}
\newcommand{\bv}{{\boldsymbol{v}}}
\newcommand{\bsC}{\boldsymbol{C}}
\newcommand{\bsI}{\boldsymbol{I}}
\newcommand{\bsJ}{\boldsymbol{J}}
\newcommand{\bsK}{\boldsymbol{K}}
\newcommand{\bsN}{\boldsymbol{N}}
\newcommand{\bsi}{\boldsymbol{\sigma}}
\newcommand{\si}{{\sigma}}
\newcommand{\ve}{{\varepsilon}}
\newcommand{\ep}{{\varepsilon}}
\newcommand{\vfi}{{\varphi}}
\newcommand{\eC}{\EuScript{C}}
\newcommand{\eE}{\EuScript{E}}
\newcommand{\eF}{\EuScript{F}}
\newcommand{\eG}{\EuScript{G}}
\newcommand{\eI}{\EuScript{I}}
\newcommand{\eJ}{\EuScript{J}}
\newcommand{\eL}{\EuScript{L}}
\newcommand{\eN}{\EuScript{N}}
\newcommand{\eP}{{\EuScript{P}}}
\newcommand{\eQ}{\EuScript{Q}}
\newcommand{\eR}{\EuScript{R}}
\newcommand{\eS}{\EuScript{S}}
\newcommand{\eV}{\EuScript{V}}
\newcommand{\ra}{\rightarrow}
\newcommand{\Llra}{\Longleftrightarrow}
\newcommand{\lan}{\langle}
\newcommand{\ran}{\rangle}
\def\inpr{\mathbin{\hbox to 6pt{\vrule height0.4pt width5pt depth0pt \kern-.4pt \vrule height6pt width0.4pt depth0pt\hss}}}
\newcommand{\pa}{\partial}
\newcommand{\pato}{\partial_{\mathrm{top}}}
\DeclareMathOperator{\sign}{\mathsf{sign}}
\DeclareMathOperator{\dist}{dist}
 \DeclareMathOperator{\Hom}{Hom}
\DeclareMathOperator{\cl}{\boldsymbol{cl}}
\DeclareMathOperator{\stack}{\textsf{stack}}
\DeclareMathOperator{\jump}{\textsf{jump}}
\DeclareMathOperator{\polygon}{\textsf{polygon}}
\begin{document}

\title{Pixelations of planar semialgebraic sets and shape recognition}

\author{Liviu I. Nicolaescu}

\address{Department of Mathematics, University of Notre Dame, Notre Dame, IN 46556-4618.}
\email{nicolaescu.1@nd.edu}
\urladdr{http://www.nd.edu/~lnicolae/}

\author{Brandon Rowekamp}

\address{Department of Mathematics \& Statistics, Minnesota State University, Mankato, 273 Wissink Hall, Mankato, MN 56001.}

\email{brandon.rowekamp@mnsu.edu}

\thanks{The first author  was partially supported by the NSF grant DMS-1005745}
\keywords{semialgebraic sets, pixelations,  normal cycle,   total curvature, Morse theory}
\subjclass[2000]{53A04, 53C65, 58A25}

\date{\today}
\begin{abstract} We   describe an algorithm that associates  to each  positive real number $\ve$  and each finite collection $C_\ve$ of planar pixels of size $\ve$   a planar  piecewise linear  set $S_\ve$  with the following additional property: if $C_\ve$ is the collection of pixels of size $\ve$ that touch a given compact semialgebraic   set $S$, then  the normal cycle of  $S_\ve$ converges  in the sense of currents  to the  normal cycle of   $S$.  In particular, in the limit we can recover   the homotopy type  of $S$ and its geometric invariants such as area, perimeter and curvature measures. At its core,  this algorithm is a    discretization  of stratified Morse theory.

\end{abstract}

\maketitle

\tableofcontents

\section*{Introduction}
\label{s: 0}
\setcounter{equation}{0}

This  paper is a natural  sequel of the  investigation begun by the second author in his dissertation \cite{Row}.   To formulate the  main  problem discussed in \cite{Row} and in this paper we need to introduce a bit of terminology.   

For $\ep>0$ we define an $\ep$-\emph{pixel} to be a  square of the form 
\[
[(m-1)\ep,m\ep]\times [(n-1)\ep,n\ep]\subset \bR^2, \;\;m,n\in\bZ.
\]
The  number $\ep$ is called the \emph{resolution}.  A \emph{pixelation} is a union   of finitely many    pixels.  A \emph{column}  of the pixelation  is   the intersection of  the pixelation with a vertical strip of the form $\{(m-1)\ep< x < m\ep\}$. The   $\ep$-pixelation of a set $S\subset \bR^2$ is the union of all the pixels that touch $S$. We denote it  by $P_\ep(S)$.     The pixelation  $P_\ve(S)$  can be viewed as a discretization of   the tube 
\[
T_\ve(S)=\Bigl\{ \bp\in \bR^2;\;\; \min_{\bq\in S}\|\bp-\bq\|_\infty\leq \frac{\ve}{2}\,\Bigr\},\;\;\|(x,y)\|_\infty:=\max\{|x|,|y|\,\}.
\]
More precisely, if $\Lambda_\ve$ denotes the (affine) lattice consisting of the centers of all the $\ve$-pixels, then the  set of centers of the pixels in $P_\ve(S)$ is $T_\ve(S)\cap \Lambda_\ve$.

\medskip

\noindent {\bf  The Main Problem.} Produce an algorithm that associates to $P_\ep(S)$ a $PL$-set $S_\ep$ which  approximates $S$ very well as $\ep\searrow 0$. More precisely, for $\ep>0$ sufficiently small, the approximation   $S_\ep$  must   have the same homotopy type  as $S$ and  the curvature features  of $S_\ep$  must  closely resemble those of $S$.   

\medskip

We will be more more accurate about what we mean  by curvature features.  For now it helps to think that  $S$ is a $C^2$-curve in the plane describing the contour  of a planar shape.   Then  the sharp angles of the  $PL$-set    $S_\ve$ should be located near the points of high curvature of the  contour $S$.  Similarly, the concavities  and convexities of $S_\ve$ should closely track those of $S$.  Thus,  if $S_\ve$  is known to approximate   a contour from a finite list $\eL$   of contours, then for $\ve>0$ sufficiently small we should be able to recognize which   contour    in $\eL$  corresponds to $S_\ve$. 

The only input we have for the $PL$-approximation  consists of  a rather blurry information about $S$, namely the pixelation $P_\ve(S)$. This pixelation is also a $PL$-set, and one could reasonably ask,  why not use $P_\ep(S)$ as the sought for $PL$-approximation. One geometric obstruction is immediately visible: the pixelation $P_\ve(S)$  very jagged and there is no hope that its curvature    properties  are similar to those of $S$. In fact there is a more insidious reason why  the pixelation is a poor approximation  for $S$.

\begin{figure}[ht]
\centering{\includegraphics[height=1.5in,width=1.5in]{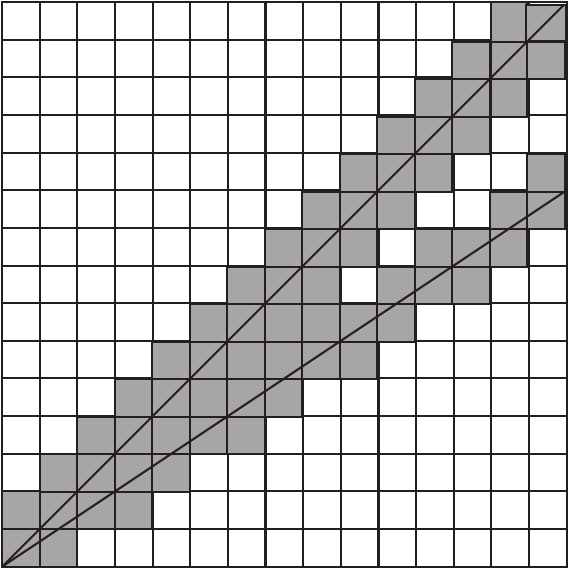}}
\caption{\sl The pixelation of the angle $A\left(1,\frac{2}{3}\right)$ contains two holes.}
\label{fig: 2hole0}
\end{figure}

Consider the pixelation of an angle $A\bigl(1,\frac{2}{3}\,\bigr)$ with  vertex at the origin whose edges  have slopes $\frac{2}{3}$ and $1$.   Figure \ref{fig: 2hole0} shows that this pixelation is not contractible and in fact its first Betti number is $2$.  These  two ``holes''   won't disappear at any resolution because all the pixelations of this angle are rescalings of each other.   

Things can get a lot worse. For example, if $n$ is a positive integer and  $S_n$ is the union of the two line segments connecting the origin to the points $(n,2n+1)$  and $(1,2)$, then for any  $\ve>0$  sufficiently small we have $b_1(\,P_\ve(S_n)\,)=2n$, while obviously $b_1(S_n)=0$.

In \cite{Row} the second  author  solved the  Main Problem in the special case when $S$ itself is  a $PL$-set. The resulting  algorithm is based on     two key principles  inspired by   Morse theory.

\medskip 

\noindent  {\bf  Principle 1.} Suppose that   $S$ is  the graph of a continuous piecewise $C^2$-function  $f:[a,b]\to\bR$,  and the second order derivatives of $f$ are bounded.  We fix a  function $\si:(0,\infty)\ra \bZ_{>0}$, called the \emph{spread}, such that 
\begin{equation*}
\lim_{\ep\searrow 0}\ep\si(\ep)=0\;\;\mbox{and}\;\; \lim_{\ep\searrow 0}\ep\si(\ep)^2=\infty
\tag{$\bsi$}
\label{tag: bsi}
\end{equation*}
For any $\ve>0$ every column of the pixelation  $P_\ve(S)$ is connected. For each $\ep>0$ we obtain by linear interpolation a $PL$ function  $\tau_\ep$ (resp. $\beta_\ep$)  whose  graph is  produced by connecting with straight line segments  the centers of the top (resp. bottom)  pixels of every   $\si(\ep)$-th column of the pixelation of the graph of $f$; see Figure \ref{fig: interpolate} where $\si=3$.

\begin{figure}[ht]
\centering{\includegraphics[height=2.2in,width=2.2in]{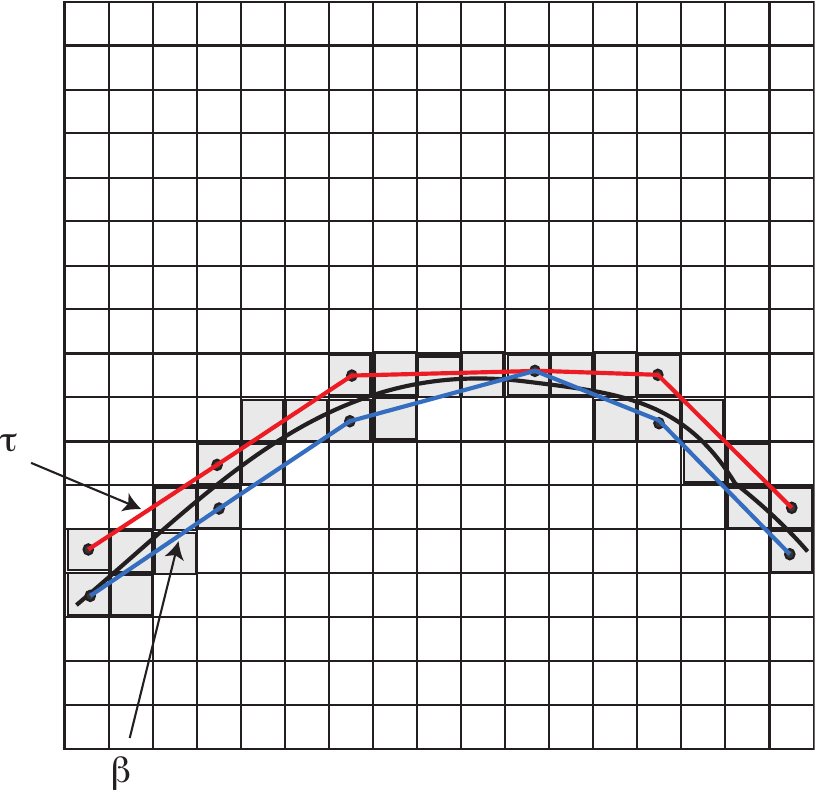}}
\caption{\sl Linear interpolations with spread $\si=3$.}
\label{fig: interpolate}
\end{figure}

The result of the algorithm   is the $PL$-region $S_\ep$ between the graphs of $\beta_\ep$ and $\tau_\ep$. This is a very narrow two dimensional $PL$ set very close to the graph of $f$.  Moreover, the condition (\ref{tag: bsi}) gurantees that the curvature of $S_\ep$ resembles that of  $S$.

An identical strategy works   when $S$ is a set of the form
\[
S=\bigl\{ (x,y)\in\bR^2;\;\;x\in [a,b],\;; \beta(x)\leq y\leq \tau(x)\,\bigr\},
\]
where $\beta,\tau: [a,b]\ra \bR$ are H\"{o}lder  continuous, piecewise $C^2$-functions   such that $\beta(x) \leq \tau(x)$, $\forall x\in [a,b]$.

We will refer to  these  two types of sets as \emph{elementary}.   Thus, the Main Problem has a solution for elementary sets.

\medskip

\noindent {\bf Principle 2.} Suppose that $S\subset \bR^2$ is a \emph{generic} $PL$-set, i.e., its $1$-dimensional skeleton does not contain vertical segments. Consider the linear map $h(x,y)=x$.        The Morse theoretic properties  of the restriction of $h$ to $P_\ep(S)$  closely mimic the Morse theoretic properties  of the restriction of $h$ to $S$ if $\ep$ is sufficiently small.   Here are the details.

 For $x_0\in \bR$ denote by $\bn_{S}(x_0)$ the number of connected components of the intersection of $S$ with the  vertical line $\{x=x_0\}$ and denote by $\eJ_S$ the set of  discontinuities of the function $x\mapsto \bn_S(x)$.  Then $\eJ_S$ is a finite subset of $\bR$ and  there exists $\gamma>0$ such that for any  $r\in (0,\gamma)$  the    set $S'_r$ obtained from $S$ by removing the vertical strips $\{|x-j|<r\}$, $j\in \eJ_S$, is a disjoint union  of  elementary regions.

The set $\eJ_S$ is difficult to determine from a  pixelation, but one can algorithmically  produce  a very small region  containing it.   Here  is   roughly the strategy.

  For $\ep>0$ and $x_0\in \bR$ we denote by $\bn_{S,\ep}(x_0)$ the number of connected components of the intersection of the vertical line    $\{x=x_0\}$ with the pixelation $P_\ep(S)$.   We denote by $\eJ_{S,\ep}$ the set of discontinuities of the   function $x\mapsto \bn_{S,\ep}(x)$.   The set $\eJ_{S,\ep}$ is finite and one can prove  the following remarkable \emph{robustness result}. 

\smallskip

\noindent ($\boldsymbol{R}_0$)   There exist $\hbar,\nu_0>0$  depending only on $S$, such that for $\ep<\hbar$ we have  
\[
\dist(\eJ_S,\eJ_{S,\ep})< \nu_0 \ep.
\]
Above,  $\dist$ refers to the Hausdorff  distance.  We define the \emph{noise region} to be   the set
\[
\eN_\ep:=\bigl\{ x\in \bR;\;\;\dist(x,\eJ_{S,\ep})\leq 2\nu\ep\,\bigr\}.
\]
For $\ep$ sufficiently small, the noise region is a finite union of disjoint compact  intervals 
\[
I_j(\ep), \;\;j=1,\dotsc, N:=\# \eJ_S,
\]
 called  \emph{noise} intervals.   

We denote  by $P'_\ep(S)$ the closure of the set obtained  from $P_\ep(S)$ by removing the vertical  strips $\{x\in I_j(\ep)\}$, $j=1,\dotsc, N$. Each of the connected components of $P'_\ep(S)$ is the pixelation of an elementary set and as such it can  be  $PL$-approximated  using  {\bf Principle 1}.    

The approximation above the noise intervals, i.e., the  intersection of $P_\ep(S)$ with the above vertical strips is  rather coarse.  Every component   of such a region is approximated by the smallest rectangle     that contains it. Here by rectangle we mean   a region of the  form $[a,b]\times [c,d]$, $a\leq b$, $c\leq d$.

\medskip

It turns out that  the approximation $S_\ep$ of $S$ obtained in this fashion from $P_\ep(S)$ is very good in the  following sense:  \emph{the normal cycle  of $S_\ep$ converges in the sense of currents   to the normal cycle of $S$.}     For a  nice introduction to the subject of normal cycles we refer to \cite{Mor}.  A brief description of this concept can also be  found on page \pageref{p: normal} of this paper.

The goal of this paper  is to extend the above program to the more general case of compact, semialgebraic subsets  of $\bR^2$.  While  {\bf  Principle 1} extends with only  little extra effort to the semi-algebraic case,   {\bf  Principle  2} requires  a   more delicate analysis.   This requires that $S$ be a generic  semialgebraic  set in the sense that the restriction  to $S$ of  the linear function $h(x,y)=x$ be a \emph{stratified Morse function} in the sense of Goresky-MacPherson; see  \cite{GM, Pig} or Section \ref{s: 1}.      In this case  the set $\eJ_S$ can be alternatively described as the set of critical values of $h|_S$ corresponding to critical points whose Morse  data in the sense of Goresky-MacPherson \cite{GM} are homotopically nontrivial.

  We know that the stratified Morse function $h|_S$  is stable, \cite{Pig}.    Remarkably the function $h$ is also robust: some   of the topological features of $h|_S$  are preserved if we slightly alter $S$ in a rather irregular way, by replacing it with one of its pixelations.  More precisely  we have the following    counterpart  of  ($\boldsymbol{R}_0$).

\smallskip

\noindent ($\boldsymbol{R}$)   Suppose that $S\subset \bR^2$ is a generic, compact semialgebraic set. Then there exist $\hbar>0$, $\nu_0>0$ and $\kappa_0\in (0,1]$, depending only on $S$ such that for $\ep<\hbar$ we have  
\[
\dist(\eJ_S,\eJ_{S,\ep})< \nu_0 \ep^{\kappa_0}.
\]

\smallskip

The main difference between ($\boldsymbol{R}$) and ($\boldsymbol{R}_0$) is the presence of the exponent $\kappa_0\in (0,1]$.    This   exponent   takes into account the possibility that  the $1$-dimensional skeleton of $S$ may have  cusps such as $|y|^p=x^q$, $x\geq 0$, $\alpha=\frac{q}{p}\geq 1$.  The  higher the orders of contact $\alpha$ of such cusps, the lower the exponent $\kappa_0$. In fact $\kappa_0 \leq \frac{1}{\alpha}$ for any order of contact $\alpha$.  However, the choice $\kappa_0=\frac{1}{2}$ will work for many  compact semialgebraic sets $S$.

The $PL$  approximation $S_\ep$ of $S$ is obtained as before, using the two principles.   To prove  that the normal  cycle of $S_\ep$ converges in the sense of currents to the normal cycle of $S$ we rely on an approximation theorem  of J. Fu, \cite{Fu1}.    That  theorem     states that the convergence of the normal cycles is guaranteed once we prove two things.

\begin{itemize}
\item Uniform bounds for the perimeter   and total curvature of   $S_\ep$.

\item  For  almost any closed half-plane $H$ we have
\[
\lim_{\ep\searrow 0} \chi\bigl(\,   H\cap S_\ep\,\bigr)=\chi\bigl(\, H\cap S\,\bigr),
\]
where $\chi$ denotes the Euler characteristic.

\end{itemize}

Of the above two facts, the second is by far the most delicate, and its proof takes up the bulk of this paper.

Let us say a few words about the organization of the paper.  In Section \ref{s: 1} we introduce the terminology used throughout the paper.   {\bf Principle 1} is proved in Section \ref{s: 2}, while  the robustness principle ($\boldsymbol{R}$)   is proved in Section \ref{s: 3}.   

In Alorithm \ref{alg: process} of Section \ref{s: 4}    we give an explicit  and detailed  description  of the process that builds the approximation    $S_\ve$ starting from the pixelation $P_\ve(S)$. This section   contains    the proof of  the main result of the paper, Theorem \ref{th: main}, which states that the normal cycle of $S_\ve$ converges to the normal cycle of $S$ as $\ve\to 0$. 

The paper  concludes with  two appendices. In Appendix \ref{s: a} we  collect  a few  basic facts  of real algebraic geometry used throughout paper together with a few  other technical results.    In Appendix \ref{s: b} we give a more formal description of the approximation  algorithm  in a way that makes  it  easily implementable on a computer. 

\smallskip

\noindent {\bf Remark.} After this work was completed we became aware of a recent work \cite{CCLT} where the authors investigate a similar problem in arbitrary dimensions.  They  used a completely different approach to produce an algorithm for approximating the curvature measures of a  compact region $R$ in $\bR^n$ .   However the techniques used in \cite{CCLT}  apply only to regions  satisfying a so called  \emph{positive $\mu$-reach condition}. This condition prohibits the existence of cusp-like  singularities in $R$.   For example,  the techniques  in \cite{CCLT} are not apllicable to the region consisting of two tangent disks.  \qed

\medskip

\noindent {\bf Acknowledgment.}  We are very grateful   to the anonymous   referee for the  many  very useful and detailed comments, questions, suggestions and corrections which have  helped us improve the quality of the paper.

\section{Basic facts}
\label{s: 1}
\setcounter{equation}{0}

We begin by recalling  some basic  notions introduced in \cite{Row}.

\begin{definition} (a) Let $\ve>0$.  Then we define an \emph{$\ve$-pixel} to be the square in $\bR^2$ of the form
\[
S_{i,j}(\ve)=[(i-1)\ve, i\ve)]\times [(j-1)\ve, j\ve)]\subset \bR^2,\;\; i,j\in \bZ.
\]
Its center is
\[
c_{i,j}(\ve):=(i\ve,j\ve)-\left(\frac{\ve}{2},\frac{\ve}{2}\right).
\]  
\noindent (b) A union of finitely many $\ve$-pixels is called an $\ve$-\emph{pixelation}.  The variable $\ve$ is called the $\emph{resolution}$ of the pixelation.

\noindent (c) For any compact subset $S\subset \bR^2$ we define the $\ve$-\emph{pixelation} of $S$ to be the union of all the $\ve$-pixels that intersect $S$. We denote the $\ve$-pixelation of $S$ by $P_\ve(S)$. The pixelation of a function $f$ is defined to be the pixelation of  its graph $\Gamma(f)$. We will denote this pixelation by $P_\ve(f)$.\qed
\label{def: pix}
\end{definition}

\begin{definition}  Fix $\ve>0$ and a compact set $S\subset \bR^2$.

\begin{enumerate}

\item A point $a\in\bR$ will be called $\ep$-generic if $a\in\bR\setminus \ep\bZ$.   For such a point $a$ we denote by  $I_\ve(a)$ the interval  of the form $(\,n\ve, (n+1)\ve)$, $n\in\bZ$ that contains $a$. 

\item  For  $a<b$  we define the vertical strip
\[
\eS_{a,b} := (a,b) \times \bR
\]
For every $k\in \bZ$ we  denote by $\eS_{\ep,k}$ the vertical strip $ \eS_{k\ep, (k+1)\ep}$. For any $\ep$-generic point $a\in\bR$ we denote by $\eS_\ep(a)$ the strip   $S_{\ve,k}$, $k:=\lfloor a/\ep\rfloor$.

\item A \emph{column} of $P_\ve(S)$ is   the intersection  of $P_\ve(S)$ with a vertical strip $\eS_{\ep,k}$, $k\in\bZ$.  The connected components of  a column are called \emph{stacks}.

\item For every $\ep$-generic $a \in \bR$, we define the \emph{column} of a pixelation $P_\ep(S)$ over $a$ to be the set  
\[
C_\ve(S,a):=\eS_\ve(a)\cap P_\ve(S).
\]
In other words,   $C_\ep(S,a)$  is the union of the pixels in $P_\ep(S)$ which intersect the vertical  line $\{x=a\}$.   When $S$ is the graph of a function $f$, we will use the notation $C_\ep(f, a)$ to denote   the column over $a$ of the pixelation $P_\ve(f)$.

\end{enumerate}

 \qed
 \label{def: bu}
\end{definition}
We have the following result, \cite[Thm. 2.2]{Row}.

\begin{theorem} If $f:[a,b]\ra \bR$ is a continuous  function, then  for any $\ep>0$ the columns of the $\ep$-pixelation of  the graph of $f$ consist of single stacks.\qed
\label{thm: ivt}
\end{theorem}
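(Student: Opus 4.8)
The plan is to reduce the assertion to a one-line consequence of the intermediate value theorem, so I do not anticipate a serious obstacle. First I would fix $\ve>0$ and an index $k\in\bZ$ and examine a single column $P_\ve(f)\cap\eS_{\ve,k}$. Since $\eS_{\ve,k}=(k\ve,(k+1)\ve)\times\bR$ is an open vertical strip of width exactly $\ve$, the only $\ve$-pixels that can lie in this column are those of the form $S_{k+1,j}(\ve)$, $j\in\bZ$; concretely, the column is the union of the horizontal bands $(k\ve,(k+1)\ve)\times[(j-1)\ve,j\ve]$ taken over $j$ in the set
\[
J:=\bigl\{\,j\in\bZ\;;\;\; S_{k+1,j}(\ve)\cap\Gamma(f)\neq\emptyset\,\bigr\}.
\]
Two such bands whose indices differ by $1$ share a horizontal edge, whereas two bands separated by a missing index have disjoint closures; hence the column is connected---that is, a single stack---precisely when $J$ is a block of consecutive integers (the case $J=\emptyset$ being trivial).

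The heart of the argument is then to verify that $J$ really is an interval of integers, and this is exactly where continuity of $f$ is used. I would set $I:=[k\ve,(k+1)\ve]\cap[a,b]$; if $I=\emptyset$ the column is empty, so assume $I$ is a nonempty compact interval. By continuity of $f$ together with the intermediate value theorem, $f(I)$ is a compact interval $[m,M]$. A pixel $S_{k+1,j}(\ve)$ meets the graph of $f$ exactly when $[(j-1)\ve,j\ve]$ meets $[m,M]$, and the $j$'s for which two such intervals overlap are precisely the integers lying in one real interval, namely those with $\frac{m}{\ve}\le j\le\frac{M}{\ve}+1$. Since this real interval has length $\frac{M-m}{\ve}+1\ge1$, the set $J$ is a nonempty block of consecutive integers, and the column is a single stack.

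The only point I would want to flag is that continuity of $f$ enters in exactly one place---to ensure that $f(I)$ is an interval---and that this is \emph{essential}, not cosmetic: for a function with an interior jump, $f(I)$ need not be an interval and a single column genuinely can split into several stacks, so any proof must invoke the intermediate value theorem (or an equivalent connectedness statement) at this step. Everything else is bookkeeping about which pixels meet a given vertical strip and about when consecutive pixels in a column are glued along an edge.
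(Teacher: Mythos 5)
Your proof is correct. The paper does not actually reproduce an argument for this statement (it is quoted from \cite[Thm.~2.2]{Row}), but your reduction --- a column is connected iff the set of row indices $j$ with $[(j-1)\ve,j\ve]\cap f\bigl([k\ve,(k+1)\ve]\cap[a,b]\bigr)\neq\emptyset$ is a block of consecutive integers, which holds because the continuous image of that compact interval is an interval $[m,M]$ --- is exactly the intended intermediate-value-theorem argument, with the edge cases (empty or degenerate $I$, pixels met only on the strip's boundary) handled correctly.
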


In this  paper we will be concerned with pixelations of  generic  planar  semialgebraic sets, where the genericity has a very precise meaning.     To  describe it we need to introduce some terminology from stratified Morse theory, \cite{GM, Pig}.

For any subset $X\subset \bR^2$ we denote by $\cl(X)$ its \emph{closure} and by $\pato X$ its \emph{topological boundary},
\[
\pato(X):=\cl(X)\setminus X.
\]
We define  a \emph{good stratification} of  a compact semialgebraic set $S\subset \bR^2$ to be an increasing filtration 
\[
\eF:\;\;F^{(0)}\subset F^{(1)}\subset F^{(2)}=S
\]
satisfying the  following properties.

\begin{itemize}

\item Each of the sets $F^{(i)}$, $i=0,1,2$ is closed. 

\item  $\dim F^{(i)}\leq i$, $i=0,1,2$. In particular $F^{(0)}$ is a finite collection of points called the \emph{vertices} of the good stratification.

\item The connected components  of $F^{(1)}\setminus F^{(0)}$ are \emph{open real analytic arcs}, i.e.,  images of  injective real analytic maps $(0,1)\ra \bR^2$. We will refer to these components as the \emph{arcs} or the \emph{edges} of the stratification.

\item The connected components  of  $F^{(2)}\setminus F^{(1)}$ are open subsets of $\bR^2$. They are called the \emph{faces} of the stratification.

\item
\[
\pato \bigl( \,F^{(2)}\setminus F^{(1)}\,\bigr)\subset F^{(1)},\;\;\pato\bigl(F^{(1)}\setminus F^{(0)}\,\bigr)\subset F^{(0)}.
\]
\end{itemize}

\begin{definition}Suppose  that  $v$ is a vertex of a good stratification of a compact semialgebraic set $S\subset \bR^2$.  The \emph{tangent cone}  $C_\infty(v,S)$ to $S$ at $v$  consists of   finitely  many one-dimensional subspaces of $\bR^2$. More precisely, a line $L_\infty\subset \bR^2$  belongs to the tangent cone $C(v,S)$ iff  there exists an arc $A$ of the stratification of $S$  with the following properties.

\begin{itemize}

\item $v\in \cl(A)$.

\item There exists a sequence of points  $v_n\in A$ such that  as $n\ra \infty$  we have $v_n\ra v$  and the tangent spaces $T_{v_n}A$ converge to   $L_\infty$. 

\end{itemize}\qed
\label{def: tcone}
\end{definition}

Suppose that $S\subset \bR^2$ is a    compact semialgebraic set equipped with a good stratification $\eF$, and $f:\bR^2\ra \bR$.  A point $p\in S$ is said to be  a \emph{critical point} of the restriction $f|_S$  if either $p$ is a vertex, or $p$ is the critical point of the restriction  of $f$ to an arc or to a face.  The critical point  $p$ is  said to be \emph{nondegerate} if it satisfies one of the following conditions.

\begin{itemize}

\item[($\bsC_0$)] The point $p$ is a vertex and for any $L_\infty\in C_\infty(p,S)$,  the differential of $f$ at $p$ does not vanish along $L_\infty$.

\item[($\bsC_1$)]   The point $p$ belongs to an arc $A$ of the stratification and as such it is a nondegenerate point of $f|_A$.

\item[($\bsC_2$)] The point $p$ belongs   to a face $F$ of the stratification and as such it is a nondegenerate point  of $f|_F$.

\end{itemize}

A function $f: \bR^2\ra \bR$ is said to be a   a \emph{stratified Morse function} with respect to the semialgebaric set $S$ equipped with the good stratification $\eF$ if all  its critical    points are nondegenerate, and no two critical points  lie on the same level set of $f$.

A compact  semialgebraic set $S\subset \bR^2$ is  called \emph{generic} if  it admits a good stratification  $\eF$  such that projection onto the $x$-axis $(x,y)\mapsto x$ is a  stratified Morse function with respect to $(S,\eF)$.  Denote this projection by $\bh$.

Observe that if  $\eF$ is a good stratification of a compact semialgebraic  set $S\subset \bR^2$, then $p$ is a critical  point of $\bh$ relative to $(S,\eF)$ if either $p$ is a vertex of the stratification, or  $p$ is a  point on an arc of $\eF$ where the the tangent space  is vertical. At such a point the arc is locally on one side of that vertical tangent.

\begin{figure}[ht]
\centering{\includegraphics[height=0.8in,width=3in]{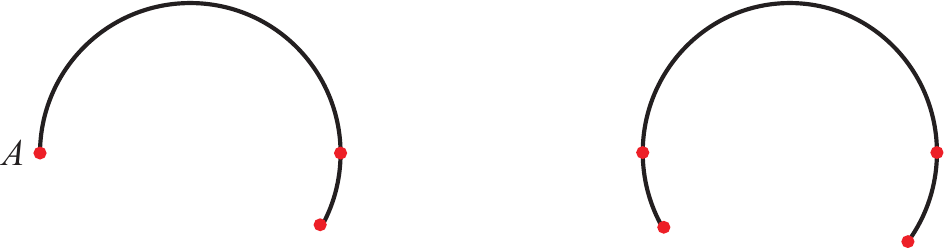}}
\caption{\sl  The curve on the right is generic, while the curve on the left is not.}
\label{fig: 1}
\end{figure}

In Figure \ref{fig: 1} we have depicted two  one-dimensional  planar semi algebraic curves (arcs of  circles). The marked points are    critical points of $\bh$. The point  $A$ on the left-hand side curve is a degenerate critical point because the condition ($\bsC_0$) is violated: the vertical line is tangent to the curve at that point.   

In the left-hand side of Figure \ref{fig: gen-nongen}   we have depicted    further  examples  of pathologies prohibited by the genericity   condition. (The pathologies  involve the points with vertical tangencies.) The  right-hand side   depicts  generic   sets  obtained by small perturbations from the nongeneric sets  in the left-hand side.

\begin{figure}[ht]
\centering{\includegraphics[height=1.4in,width=1.4in]{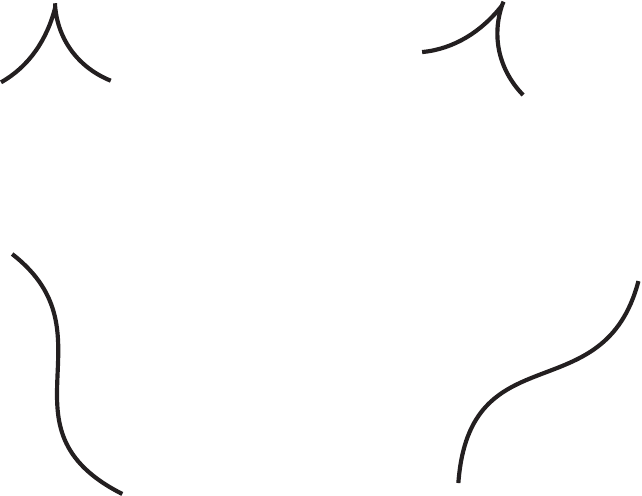}}
\caption{\sl The curves on the left-hand side  are nongeneric. They become generic after a small  perturbation.}
\label{fig: gen-nongen}
\end{figure}

\section{Approximations of elementary sets}
\label{s: 2}
\setcounter{equation}{0}

In this section we study the pixelations of simple two dimensional sets.  

\begin{definition} Fix $\ve>0$ and a compact set $S$. 

\begin{enumerate}
\item An $\ve$-\emph{profile} of $S$ is   a  set  $\Pi_\ve$   of points in the plane  with the following properties.

\begin{enumerate} 

\item Each point in  $\Pi_\ve$ is the center of  an $\ve$-pixel that intersects $S$. 

\item   Every  column of $P_\ve(S)$ contains precisely one point of $\Pi_\ve$.
\end{enumerate}

\item The  \emph{top/bottom $\ve$-profile} is   the profile  consisting of the centers  of  the highest/lowest pixels in  each column of $P_\ve(S)$.

\item An $\ve$-sample of $S$ is a subset of an $\ve$-profile. An \emph{upper/lower}  $\ep$-sample of $S$ is an  $\ep$-sample of the upper/lower $\ep$-profile of $S$.

\item Two $\ve$-samples are  called \emph{compatible} if they have the same projections on the $x$-axis.

\end{enumerate}\qed
\label{def: profile}
\end{definition}

\begin{definition} Suppose $p_1,\dotsc, p_N$ is a finite sequence of points in $\bR^2$.  (The points need not be distinct). We denote by
\[
\langle p_1,p_2,\dotsc, p_n\rangle
\]
the $PL$ curve defined  as the union of the straight line segments $[p_1,p_2],\dotsc, [p_{n-1},p_n]$.\qed
\end{definition}

Observe that   each $\ve$-profile $\Pi_\ve$ of a set is equipped with a linear order $\preceq$. More precisely, if $p_1,p_2$ are points in $\Pi_\ve$, then 
\[
p_1\preceq p_2\Longleftrightarrow x(p_1)\leq x(p_2),
\]
where $x:\bR^2\ra \bR$   denotes the projection $(x,y)\mapsto x$.  In particular, this shows that  any $\ve$-sample of $S$ carries a natural total order.

\begin{definition}If  $\Xi$  is an $\ve$-sample of $S$,  then the $PL$-interpolation  determined by the sample $\Xi$ is the continuous, piecewise   linear function  $L= L_\Xi$ obtained as follows.

\begin{itemize}

\item Arrange the points in $\Xi$ in increasing order, with respect to the above total order,
\[
V=\{ \xi_0\prec \xi_1\prec\xi_2\prec\cdots \prec \xi_n\},\;\;n+1=|\Xi|.
\]
\item The graph of $L_\Xi$ is the $PL$-curve $\langle \xi_0,\xi_1, \dotsc, \xi_n\rangle$. 

\end{itemize}\qed
\label{def: interpolate}
\end{definition} 

In applications, the sample sets $\Xi$ will be chosen to satisfy   certain regularity.    

\begin{definition} \begin{enumerate}
\item A \emph{spread function} is a nonincreasing function $\si: (0,\infty)\to \bZ_{>0}$  with the following properties.
\begin{subequations}
\begin{equation}
\lim_{\ep \searrow 0} \sigma(\ep) = \infty,
\label{eq: spread1}
\end{equation}
\begin{equation}
\lim_{\ep \searrow 0} \ep \sigma(\ep) = 0.
\label{eq: spread2}
\end{equation}
\end{subequations}

\item If $\sigma $ is a positive  integer and $\Pi_\ep$ is an $\ep$-profile, then an \emph{$\ve$-sample with spread $\sigma$} is a subset  
\[
\Xi=\{\xi_0\prec\cdots \prec\xi_n\}\subset \Pi_\ve(S)
\]
such that  the following hold.

\begin{itemize}

\item The points $\xi_0$ and $\xi_n$ are the left and rightmost points in the profile, i.e., for each $p \in \Pi_\ep$, $x(\xi_0) \le x(p) \le x(\xi_n)$.

\item 
\[
\frac{1}{2}\ve \sigma(\ve  \leq |x(\xi_k)-x(\xi_{k-1})|\leq \ve \sigma(\ve),\;\;\forall k=1,\dotsc, n.
\]
\end{itemize}
\end{enumerate}\qed
\end{definition}

\begin{definition} A subset  $S\subset \bR^2$ is  said to be  \emph{elementary} over the interval $[a,b]$  if it can be   defined as
\[
S=S(\beta,\tau):= \bigl\{\, (x,y):\;\; x \in [a,b], \beta(x) \le y \le \tau(x)\,\bigr\} ,
\]
where $\beta,\tau:[a,b]\ra \bR$ are  continuous semialgebraic functions  such that  $\beta(x)\leq \tau(x)$, $\forall x\in [a,b]$. 

The function $\beta$ is called the \emph{bottom} of $S$ while $\tau$ is called the \emph{top} of $S$.  If 
\begin{equation}
\beta(x) < \tau(x),\;\;\forall x\in (a,b),
\label{eq: bt1}
\end{equation}
then the elementary set is  said to be \emph{nondegenerate}. If 
\begin{equation}
\beta(x)=\tau(x),\;\;\forall x\in [a,b],
\label{eq: bt2}
\end{equation}
    then the set $S$ is called \emph{degenerate}.  The elementary set is called \emph{mixed}  if   both sets 
    \[
    \bigl\{ x\in (a,b);\;\;\tau(x)-\beta(x)>0\,\bigr\}\;\;\mbox{and}\;\; \bigl\{ x\in (a,b);\;\;\tau(x)-\beta(x)=0\,\bigr\}
    \]
    are nonempty.\qed
\label{def: s-type}
\end{definition}

Observe that an elementary set $S(\beta,\tau)$ over the compact interval $[a,b]$ admits \emph{good partitions}, i.e., partitions
\[
a=c_0<c_1<c_1<\cdots <c_n=b,\;\;n\geq 2
\]
such that each of the elementary sets $[c_{i-1},c_i]\times \bR\cap S(\beta,\tau)$ is either degenerate or  nondegenerate.   The good partition  with minimal $n$ is called the \emph{minimal good partition}; see Figure \ref{fig: good-part}.

\begin{figure}[ht]
\centering{\includegraphics[height=1.3in,width=3.5in]{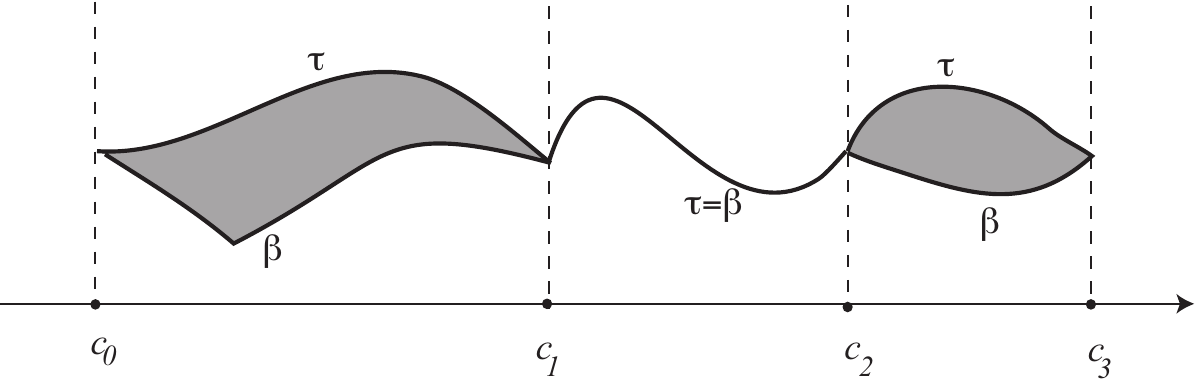}}
\caption{\sl The minimal good partition of a mixed elementary set.}
\label{fig: good-part}
\end{figure}

In the remainder of this section $S$ will indicate an elementary set.  We first note that like the pixelation of a function, each column of $P_\ep(S)$ consists of a single stack, i.e., a single connected component.

\begin{proposition}
If $S=S(\beta,\tau)$ is an elementary set over a compact interval $[a,b]$,  then for every  $x \in [a,b]\setminus \ep\bZ$, the column $C_\ep(S,x)$ consists of exactly one stack.
\label{pro: ivt2}
\end{proposition}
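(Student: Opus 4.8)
The plan is to reduce the claim to Theorem~\ref{thm: ivt} (the intermediate value result for pixelations of graphs), which guarantees that the pixelation of a \emph{single} continuous function has columns consisting of single stacks. The key observation is that, for an elementary set $S=S(\beta,\tau)$ and a fixed $\ep$-generic $x\in[a,b]$, the column $C_\ep(S,x)$ is exactly the union of $\ep$-pixels meeting the vertical line $\{x=x\}$ that also meet $S$; since over the strip $\eS_\ep(x)$ the set $S$ is the region ``between the graphs'' of $\beta$ and $\tau$, this column is a contiguous vertical run of pixels, and I want to show there are no gaps.

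First I would set $k:=\lfloor x/\ep\rfloor$ and consider the strip $\eS_{\ep,k}$. A pixel $S_{k,j}(\ep)$ belongs to $C_\ep(S,x)$ iff it meets both $\{x=x\}$ and $S$; the former just says the pixel is in column $k$, so the question is which pixels $S_{k,j}(\ep)$ meet $S$. Because $S(\beta,\tau)$ is the set of $(u,y)$ with $u\in[a,b]$ and $\beta(u)\le y\le\tau(u)$, a pixel $S_{k,j}(\ep)$ meets $S$ iff there is some $u$ in the (closed) base interval $[(k-1)\ep,k\ep]\cap[a,b]$ with $[\beta(u),\tau(u)]\cap[(j-1)\ep,j\ep]\ne\emptyset$. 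Thus $S_{k,j}(\ep)$ meets $S$ iff the pixel meets the graph $\Gamma(\beta)$ or $\Gamma(\tau)$, or lies strictly between them, i.e. iff $j$ lies in an interval of integers bounded below by the lowest pixel index hit by $\Gamma(\beta)$ over that base interval and above by the highest pixel index hit by $\Gamma(\tau)$. Concretely, I would argue: if $S_{k,j_1}(\ep)$ and $S_{k,j_2}(\ep)$ both meet $S$ with $j_1<j<j_2$, then $S_{k,j}(\ep)$ meets $S$ as well. Indeed pick witnesses $u_1,u_2$ in the base interval with $\beta(u_i)\le y_i\le\tau(u_i)$ and $y_i\in[(j_i-1)\ep,j_i\ep]$; by continuity of $\beta$ and $\tau$ and the intermediate value theorem applied along the base interval from $u_1$ to $u_2$, the continuous function $u\mapsto\tau(u)$ (resp.\ $u\mapsto\beta(u)$) takes, somewhere in between, a value that is $\ge (j-1)\ep$ (resp.\ $\le j\ep$), giving a point of $S$ inside $S_{k,j}(\ep)$. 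Hence the set of $j$ with $S_{k,j}(\ep)\cap S\ne\emptyset$ is an interval of consecutive integers, so its union is connected: the column is a single stack.

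An alternative, slightly slicker route I would present is to invoke Theorem~\ref{thm: ivt} twice and glue: the pixelation $P_\ep(\tau)$ of the top graph has a single-stack column over $x$, likewise $P_\ep(\beta)$; and every pixel $S_{k,j}(\ep)$ strictly between the bottom stack and the top stack also meets $S$ (its base interval maps, under any value of $\tau$ above or $\beta$ below by the IVT argument, into the pixel), so $C_\ep(S,x)$ is the union of the $\tau$-column, the $\beta$-column, and the solid block of pixels between them — a vertically contiguous, hence connected, set. Either way the conclusion follows once one checks there are no interior gaps.

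The main obstacle — really the only non-formal point — is justifying the ``no interior gap'' step cleanly: one must be careful that the base interval over which we evaluate $\beta,\tau$ is $[(k-1)\ep,k\ep]\cap[a,b]$ (it can be a proper subinterval near the endpoints $a,b$), and that the intermediate value argument is applied to $\beta$ and $\tau$ restricted to that base interval, using the hypothesis $\beta\le\tau$ only to know the region is genuinely ``filled in'' between the graphs. Semialgebraicity plays no role here; mere continuity of $\beta,\tau$ suffices, exactly as in Theorem~\ref{thm: ivt}. I expect the whole argument to be two short paragraphs in the final write-up.
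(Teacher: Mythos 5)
Your proposal is correct, and your second (``slicker'') route is essentially the paper's own proof: apply Theorem \ref{thm: ivt} to $\beta$ and $\tau$ separately, then note that every pixel of the column lying between the two resulting stacks also meets $S$, so the column is a contiguous, hence connected, run of pixels. One small point in your first, direct variant: at the no-gap step you need a single $u$ at which $\beta(u)\le j\ep$ and $\tau(u)\ge (j-1)\ep$ hold \emph{simultaneously}; two separate applications of the intermediate value theorem do not quite give this, but it does follow because $\{\,\beta\le j\ep\,\}$ and $\{\,\tau\ge (j-1)\ep\,\}$ are closed subsets covering the connected base interval (their complements are disjoint since $\beta\le\tau$), hence they must intersect.
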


\begin{proof}  Fix an $\ep$-generic $x\in [a,b]$. By Theorem \ref{thm: ivt} the columns  $C_\ve(\beta,x)$  and    $C_\ep(\tau,x)$    consist of single stacks. If these two columns intersect, then  the conclusion is obvious. If they do not intersect, then  any  pixel in the strip $S_\ve(x)$ situated below the stack $C_\ve(\tau,x)$ and above    the stack $C_\ve(\beta,x)$ is a pixel of $P_\ve(S)$.  This  proves that the  column $C_\ve(S,x)$ consists of a single stack. \end{proof}

\begin{definition}Fix $\ep>0$ and an elementary  set $S=S(\beta,\tau)$.   Suppose that $\Xi^\pm_\ep$ are compatible upper/lower samples of $S$
\[
\Xi^\pm_\ve=\{\xi_0^\pm\prec\xi_1^\pm\prec\cdots \prec\xi_n^\pm\}.
\]
The  $PL$-\emph{approximation}  of $S$  determined by these two  samples is the compact $PL$-set   bounded by the  closed $PL$-curve
\[
\langle \xi_0^-, \xi_1^-,\dotsc, \xi_n^-,\xi_n^+,\xi_{n-1}^+,\dotsc,\xi_0^+, \xi_0^-\rangle.
\]
\qed

\end{definition}

The total curvature of a $C^1$-immersion $\gamma:[a,b]\ra \bR^2$ which is $C^2$  on $(a,b)$  is defined as follows.    We set 
\[
\bv(t):=\frac{1}{|\dot{\gamma}(t)|}\dot{\gamma(t)},\;\;\kappa(t):=|\dot{\bv}(t)|.
\]
The scalar $\kappa(t)$ is called the \emph{curvature} of $\gamma$ at the point $\gamma(t)$.  We define the \emph{total curvature}  of $\gamma$ to be
\[
K(\gamma):=\int_a^b \kappa(t)\,dt.
\]
Suppose now that  $\gamma:[a,b]\ra \bR^2$ is a continuous  and  piecewise $C^2$-immersion, i.e., there exists a finite subset $\{t_0,\dotsc, t_\nu\}\subset [a,b]$,  such that
\[
a=t_0<t_1<\cdots <t_{\nu-1}<t_\nu=b,
\]
 the restriction $\gamma_i:=\gamma|_{[t_{i-1},t_i]}$ is  a $C^1$ immersion,  and the restriction of  $\gamma$ to the open interval to $(t_{i-1},t_i)$ is $C^2$, for any $i=1,\dotsc,\nu$.  The curvature of  $\gamma$ at a jump point $\gamma(t_i)$ is the quantity
\[
\kappa(t_i):=\lim_{\ve\searrow 0} \dist_{S^1}\bigl(\, \bv(t_i+\ve)\,, \,\bv(t_i-\ve)\,\bigr),
\]
where $\dist_{S^1}(p,q)\in[0,\pi]$ denotes the geodesic distance between two points $p,q$ on the unit circle. We  define the total curvature of  $\gamma$ to be
\[
\begin{split}
K(\gamma)&:=\sum_{i=1}^\nu K(\gamma_i)+\sum_{i=1}^{\nu-1}\kappa(t_i)\\
&+\begin{cases}0, & \gamma(b)\neq\gamma(a),\\
& \\
\dist_{S_1}\bigl(\,\bv(b^-),\bv(a^+\,\bigr), &\gamma(b)=\gamma(a).
\end{cases}
\end{split}
\]
For more details we refer to \cite{Mil} and \cite[\S 2.2]{Mor}.

We define a \emph{semialgebraic arc}  to be the image  of a continuous, injective semialgebraic map 
\[
\vfi:[a,b]\ra \bR^2. 
\]
Suppose that $\vfi:[a,b]\ra \bR^2$ is a continuous, injective, semialgebraic map  whose image  is the semialgebraic arc $\bsC$. Set $A:=\vfi(a)$ and $B:=\vfi(b)$ so that $\bsC$ connects $A$ to $B$.  The  semialgebraic map $\vfi$ is piecewise $C^2$ and has a total curvature which a priori could be infinite. 

\begin{lemma} The total  curvature of a semialgebraic arc $C\subset \bR^2$ is finite.
\end{lemma}

\begin{proof}  The arc $C$ has finitely many singularities  and their complement is  a finite union of oriented,  bounded, semialgebraic,  $C^1$-arcs.  If $C_i$ is such an arc,  then its total curvature is   the length of the   oriented Gauss  path $\Gamma_i:C_i\to S^1$, where $\Gamma_i(p)$ is the  unique unit vector in $T_p C_i$    determined by the orientation of $C_i$.  Since the Gauss map is semialgebraic and $C_i$ has finite length  we deduce that the length of $\Gamma_i$ is finite.    The contributions to the total curvatures of the finitely many singular points  of $C$  are all finite.
\end{proof}

Suppose that $\bsC$ is a semialgebraic arc  defined  by the  the continuous, semialgebraic injection $\vfi:[a,b]\to\bR^2$. An \emph{ordered sampling} of $\bsC$ is an ordered  collection of points 
\[
\eP:=\bigl\{ P_1,\dotsc, P_n\,\bigr\}\subset  \bsC,
\]
such that  the collection
\[
\vfi^{-1}(\eP):=\bigl\{ t_1=\vfi^{-1}(P_1),\dotsc, t_n:=\vfi^{-1}(P_n)\bigr\}\subset [a,b]
\]
satisfies
\[
t_1<t_2<\cdots<t_n.
\]
The \emph{mesh} of the ordered sampling $\eP$ is the positive  number
\[
\|\eP\|:=\max \bigl\{ \, \dist(A,P_1),\dist(P_1,P_2),\dotsc, \dist(P_{n-1},P_n),\dist(P_n,B)\,\bigr\}.
\]
We denote by $\bsC(\eP)$ the $PL$-curve $\lan P_1,\dotsc, P_n\ran$.   

A result of J. Milnor,  \cite[Thm.2.2]{Mil},   shows that the total curvature   of a $C^2$-curve  can be  approximated by the  total curvature of inscribed   polygons. The next result, whose proof is delegated to Appendix \ref{s: a}, shows that  if the curve is semialgebraic, then  the $C^2$ requirement  is not necessary. 

\begin{proposition}  Suppose   that $\bsC$ is a semialgebraic  arc  and for every $\ep>0$ we are given an ordered sampling $\eP_\ep$ of $\bsC$.    Denote by $L$ (resp. $K$) the  length (resp. total curvature) of $\bsC$ and by $L_\ve$ (resp. $K_\ve$) the length (resp. total curvature) of $\bsC(\eP_\ve)$.  If
\[
\lim_{\ve\searrow 0}\|\eP_\ve\|=0,
\]
then
\[
\lim_{\ve\searrow 0} L_\ve =L\;\;\mbox{and}\;\; \lim_{\ve\searrow 0} K_\ve =K.
\]\qed
\label{prop: pl-approx}
\end{proposition}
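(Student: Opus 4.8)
The plan is to reduce the statement to the classical $C^2$ case handled by Milnor's theorem by exploiting the stratification theory of semialgebraic arcs. First I would invoke the fact that a semialgebraic arc $\bsC$ admits a finite semialgebraic stratification: there is a finite subset $a = s_0 < s_1 < \cdots < s_m = b$ such that on each open subinterval $(s_{i-1}, s_i)$ the parametrization $\vfi$ is real analytic (in particular $C^2$) and an immersion, and moreover the one-sided tangent directions $\lim_{t \searrow s_i} \bv(t)$ and $\lim_{t \nearrow s_i} \bv(t)$ exist at each breakpoint (this is standard for semialgebraic curves — the Puiseux expansion at each parameter value gives a well-defined tangent half-line). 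Thus $\bsC$ is, after subdivision, a finite concatenation of semialgebraic $C^2$-immersed subarcs $\bsC_1, \dots, \bsC_m$ meeting at corner points, and both $L$ and $K$ decompose as sums over these pieces plus the finitely many exterior-angle contributions $\dist_{S^1}(\bv(s_i^-), \bv(s_i^+))$ at the breakpoints.

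The second step is to arrange that the samplings $\eP_\ve$ respect this decomposition. Since $\|\eP_\ve\| \to 0$, for $\ve$ small every breakpoint $\vfi(s_i)$ lies within distance $\|\eP_\ve\|$ of some sample point; I would add each $\vfi(s_i)$ to the sampling, obtaining an enlarged sampling $\eP_\ve'$ with $\|\eP_\ve'\| \le \|\eP_\ve\|$ that contains all the corner points. Adding points to an inscribed polygon only increases its total curvature and length, and both increments are controlled: near a $C^2$ point the extra turning is $O(\|\eP_\ve\|)$, so $|L_\ve' - L_\ve| \to 0$ and $|K_\ve' - K_\ve| \to 0$. Hence it suffices to prove the convergence for the refined samplings. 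But $\bsC(\eP_\ve')$ now splits as a concatenation of inscribed polygons $\bsC_j(\eP_\ve' \cap \bsC_j)$, one per $C^2$-piece, joined at the true corners. On each piece $\bsC_j$, which is $C^2$ and immersed, Milnor's theorem \cite[Thm.\ 2.2]{Mil} gives $L_{\ve,j}' \to L(\bsC_j)$ and $K_{\ve,j}' \to K(\bsC_j)$; and the exterior angle of $\bsC(\eP_\ve')$ at the corner $\vfi(s_i)$ converges to $\dist_{S^1}(\bv(s_i^-), \bv(s_i^+))$ because the adjacent sample points approach $\vfi(s_i)$ along the respective one-sided tangent directions. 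Summing over $j$ and over the corners reproduces exactly the defining formula for $L$ and $K$, so $L_\ve' \to L$, $K_\ve' \to K$, and therefore $L_\ve \to L$, $K_\ve \to K$.

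The one genuine subtlety — and the step I expect to be the main obstacle — is the behavior near a breakpoint $s_i$ that is \emph{not} a corner of $\bsC$ in the naive sense but where $\vfi$ fails to be an immersion, i.e.\ $\dot\vfi(s_i) = 0$ (a cusp of the parametrization, such as $|y|^p = x^q$). There $\bv(t)$ is still defined by continuity from each side via reparametrization by arc length, but one must check that the chord directions $P_{k-1}P_k$ straddling $s_i$ actually converge to these one-sided limits and that no spurious curvature is lost or gained in the limit. I would handle this by passing to the arc-length parametrization on a neighborhood of $s_i$ — under which $\bsC$ becomes a genuine $C^1$ (in fact $C^2$ away from $s_i$, with one-sided $C^1$ limits at $s_i$) immersed arc — and applying Milnor's estimate on each side separately, using that the total curvature of a semialgebraic arc is finite (which follows, e.g., from the Cauchy–Crofton formula for the integral-geometric total curvature of a semialgebraic set, or directly from finiteness of the number of inflection points of a semialgebraic curve). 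Finiteness of $K$ is what guarantees that the approximation error localizes and can be made uniformly small, so I would state and use that finiteness explicitly at the start of the argument.
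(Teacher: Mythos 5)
Your argument is correct in outline but takes a genuinely different route from the paper's. The paper also decomposes $\bsC$ into finitely many subarcs, but it chooses the cut points $Q_1,\dotsc,Q_{N-1}$ to be \emph{smooth} points of the arc and arranges for each piece between consecutive cut points to be \emph{convenient}, i.e.\ a graph $y=f(x)$ or $x=f(y)$ with $f$ piecewise $C^2$; the convergence for convenient arcs is then quoted from \cite[Prop.~3.6]{Row}, and because the junctions are smooth the discrepancy $K_\ve-\sum_j K_\ve^j$ (which comes only from the turning angles at the sample points adjacent to the junctions) tends to zero with no further analysis. You instead cut at the singular parameter values, apply Milnor's theorem on the analytic immersed pieces, and must therefore prove convergence of the inscribed polygon's turning at each corner and cusp --- which is precisely the difficulty the paper's choice of cut points is designed to sidestep. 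Your route is more self-contained (it does not lean on the earlier result for graphs) and makes the origin of the corner contributions explicit; the paper's is shorter because the delicate local work is outsourced.

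Two steps of yours need tightening. First, the claim that inserting the breakpoints $\vfi(s_i)$ changes $K_\ve$ by a quantity that is small ``near a $C^2$ point'' does not apply as stated, since the inserted points are exactly the non-$C^2$ points; a corner of exterior angle $\theta>0$ is being added as a vertex. What saves the step is that the chord of $\bsC(\eP_\ve)$ straddling $\vfi(s_i)$ has direction equal, up to $o(1)$, to a positive combination of the one-sided tangents $\bv(s_i^-)$ and $\bv(s_i^+)$, so the two turning angles it produces at the adjacent sample points sum to $\dist_{S^1}\bigl(\bv(s_i^-),\bv(s_i^+)\bigr)+o(1)$, matching the single turning angle of the refined polygon at $\vfi(s_i)$; combined with Milnor's monotonicity $K_\ve\le K_\ve'$ this yields $|K_\ve'-K_\ve|\to 0$. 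Second, on each side of a cusp the curvature is unbounded (though integrable), so the compact-$C^2$ form of Milnor's theorem does not apply directly there; your proposed fallback --- the supremum characterization of total curvature, or equivalently the monotonicity of the tangent direction near a semialgebraic singular point, which makes the one-sided branches convex arcs --- is the correct fix and should be stated as the lemma actually being used on those pieces.
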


\begin{theorem}  Suppose that $h: [a,b]\ra \bR$ is a continuous semialgebraic function and $\ve\mapsto \si(\ep)$ is a spread function satisfying the additional condition
\begin{equation}
\lim_{\ve\searrow 0} \ve\si(\ve)^2=\infty.
\label{eq: spread3}
\end{equation}
 For every $\ve>0$ we choose  an $\ep$-sample $\Xi_\ep$ with spread $\si(\ep)$ of the graph $\Gamma$ of $h$.   Denote by $\bsC_\ep$ the graph of the  $PL$-function $L_{\Xi_\ep}$ described in Definition \ref{def: interpolate}. Then, as $\ve\searrow 0$ we have
\[
{\rm length}\,(\bsC_\ep)\ra {\rm length}\,(\Gamma)\;\;\mbox{and}\;\; K(\bsC_\ep)\ra K(\Gamma).
\]
\label{th: pl-approx1}
\end{theorem}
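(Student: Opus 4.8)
The plan is to reduce the statement to Proposition \ref{prop: pl-approx} by comparing $\bsC_\ve$ with a genuine inscribed polygon of $\Gamma$ whose vertices lie in the same pixels as those of $\bsC_\ve$. We may assume $a<b$ (otherwise everything is trivial).

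Write $\Xi_\ve=\{\xi_0\prec\cdots\prec\xi_n\}$. By definition of an $\ve$-profile each $\xi_j$ is the center of an $\ve$-pixel meeting $\Gamma$; I choose a point $P_j\in\Gamma$ inside that pixel, so that $\|\xi_j-P_j\|\leq \ve/\sqrt{2}$. Viewing $\Gamma=\{(x,h(x)):x\in[a,b]\}$ as a semialgebraic arc parametrized by $x$, I claim $\eP_\ve:=\{P_0,\dots,P_n\}$ is an ordered sampling of $\Gamma$ with $\|\eP_\ve\|\to 0$, so that $\lan P_0,\dots,P_n\ran=\bsC(\eP_\ve)$. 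Ordering: the spread lower bound $|x(\xi_j)-x(\xi_{j-1})|\geq\frac{1}{2}\ve\si(\ve)$ forces, for $\ve$ small, the columns of $\xi_{j-1}$ and $\xi_j$ to be non-adjacent, whence $x(P_{j-1})<x(P_j)$. Mesh: $\|P_j-P_{j-1}\|\leq \sqrt{2}\,\ve+\|\xi_j-\xi_{j-1}\|$; the horizontal displacement $|x(\xi_j)-x(\xi_{j-1})|$ is at most $\ve\si(\ve)$ by the spread upper bound, while, since $\xi_j$ and $P_j$ lie in one pixel and $h$ is uniformly continuous on $[a,b]$, one has $|y(\xi_j)-h(x(\xi_j))|\leq \ve+\omega_h(\ve)$, so the vertical displacement $|y(\xi_j)-y(\xi_{j-1})|$ is at most $2\ve+2\omega_h(\ve)+\omega_h(\ve\si(\ve))$, where $\omega_h$ is the modulus of continuity of $h$; both bounds tend to $0$ because $\ve\si(\ve)\to 0$, hence $\|P_j-P_{j-1}\|\to 0$ uniformly in $j$. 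The distances from $P_0,P_n$ to the endpoints of $\Gamma$ are controlled the same way, since the extreme columns of $P_\ve(\Gamma)$ are exactly the columns containing $a$ and $b$. Proposition \ref{prop: pl-approx} then yields
\[
{\rm length}\,\lan P_0,\dots,P_n\ran\to {\rm length}(\Gamma),\qquad K\lan P_0,\dots,P_n\ran\to K(\Gamma).
\]

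It remains to show ${\rm length}(\bsC_\ve)-{\rm length}\lan P_\bullet\ran\to 0$ and $K(\bsC_\ve)-K\lan P_\bullet\ran\to 0$. Put $u_j:=\xi_j-\xi_{j-1}$ and $v_j:=P_j-P_{j-1}$, so that $\|u_j-v_j\|\leq \|\xi_j-P_j\|+\|\xi_{j-1}-P_{j-1}\|\leq \sqrt{2}\,\ve$ while $\|v_j\|\geq x(P_j)-x(P_{j-1})\geq \frac{1}{2}\ve\si(\ve)-\ve$. For the length,
\[
\bigl|{\rm length}(\bsC_\ve)-{\rm length}\lan P_\bullet\ran\bigr|\leq \sum_{j=1}^{n}\|u_j-v_j\|\leq 2\sum_{j=0}^{n}\|\xi_j-P_j\|\leq \sqrt{2}\,(n+1)\ve\longrightarrow 0,
\]
because $n\ve=O(1/\si(\ve))\to 0$. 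For the curvature, the exterior angle of $\bsC_\ve$ at the interior vertex $\xi_j$ and the one of $\lan P_\bullet\ran$ at $P_j$ are $\angle(u_j,u_{j+1})$ and $\angle(v_j,v_{j+1})$; since $\angle$ is the geodesic metric on directions, these differ by at most $\psi_j+\psi_{j+1}$ with $\psi_j:=\angle(u_j,v_j)\leq\arcsin\bigl(\|u_j-v_j\|/\|v_j\|\bigr)\leq \frac{\pi}{2}\|u_j-v_j\|/\|v_j\|=O(1/\si(\ve))$ uniformly in $j$; as there are $n=O\bigl((b-a)/(\ve\si(\ve))\bigr)$ interior vertices,
\[
\bigl|K(\bsC_\ve)-K\lan P_\bullet\ran\bigr|\leq 2\sum_{j=1}^{n}\psi_j=O\!\Bigl(\frac{1}{\ve\si(\ve)^2}\Bigr)\longrightarrow 0
\]
by the extra hypothesis $\ve\si(\ve)^2\to\infty$. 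Combining with the two limits above proves the theorem.

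The one point that genuinely needs care is the choice of the comparison points $P_j$, and this is where I expect the main obstacle to sit. The naive guess $P_j=(x(\xi_j),h(x(\xi_j)))$ does not work: near a point of $\Gamma$ with vertical tangent — say a cusp $|y|^p=x^q$ of order $\alpha=q/p>1$ — the vertical gap between a pixel center and $\Gamma$ can be of order $\ve^{1/\alpha}$, and the factor $\ve\si(\ve)^2$ is powerless against such a gap in the curvature comparison. Taking instead an arbitrary point of $\Gamma$ lying in the same pixel restores the uniform bound $\|\xi_j-P_j\|\leq\ve/\sqrt{2}$, independent of the local geometry of $\Gamma$; after that the three spread conditions $\si(\ve)\to\infty$, $\ve\si(\ve)\to 0$, $\ve\si(\ve)^2\to\infty$ enter, respectively, in the length comparison, in the mesh estimate feeding Proposition \ref{prop: pl-approx} (via uniform continuity of $h$), and in the curvature comparison — which is the only place the third condition is used.
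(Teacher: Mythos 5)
Your proof is correct and follows essentially the same route as the paper: pick a point $P_j\in\Gamma$ in the pixel of each sample point, apply Proposition \ref{prop: pl-approx} to the resulting inscribed polygon, and then compare lengths by the triangle inequality and curvatures by an $O(1/\si(\ve))$ per-vertex angle bound summed over $O(1/(\ve\si(\ve)))$ vertices. The only (harmless) cosmetic differences are that you control the mesh via the modulus of continuity rather than a H\"older exponent, and you compare turning angles via $\arcsin(\|u-v\|/\|v\|)$ rather than via arctangents of slopes.
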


\begin{proof} We use a simple strategy.  More precisely   for every $\ep>0$ we construct an ordered sampling $\eP_\ep$ of $\Gamma$ such that    
\begin{equation}
\lim_{\ve\searrow 0} \|\eP_\ep\|=0,
\label{eq: mesh}
\end{equation}
and if $\Gamma_\ep$ denotes the $PL$-curve $\Gamma(\eP_\ep)$ determined by the ordered sampling $\eP_\ep$,  then as $\ep\searrow 0$ we have
\begin{subequations}
\begin{equation}
{\rm length}\,(\bsC_\ep)= {\rm length}\,(\Gamma_\ep)+ O\left(\frac{1}{\si(\ep)}\right),
\label{eq: app-length}
\end{equation}
\begin{equation}
K(\bsC_\ep)=K(\Gamma_\ep) +O\left(\frac{1}{\ep\si(\ep)^2}\right).
\label{eq: app-curv}
\end{equation}
\end{subequations}
The desired conclusions  will then follow from Proposition \ref{prop: pl-approx}.

Suppose  that $\Xi_\ep$ consists of the points $Q_0^\ep,Q_1^\ep,\dotsc, Q_{n(\ep)}^\ep$ arranged  in  the increasing order defined by their  $x$-coordinates.    Observe that  since $\Xi_\ep$ has spread  $\si(\ep)$ then
\begin{equation}
n(\ep)<\frac{2(b-a)}{\ep\si(\ep)}.
\label{eq: subdivisions}
\end{equation}
Each of the points of $\Xi_\ep$  is the center of a pixel that touches $\Gamma$. Thus, for any  $k=0,1,\dotsc, n(\ep)$ there exists a point $P_k^\ep\in \Gamma$ that lies in the same pixel as $Q_k^\ep$. We obtain in this fashion an ordered sampling
\[
\eP_\ep=\bigl\{ P_0^\ep,P_1^\ep,\dotsc, P_{n(\ep)}^\ep\,\bigr\}
\]
of  $\Gamma$.    The  function $h$ is continuous and  semialgebraic and thus it is H\"{o}lder continuous with some H\"{o}lder exponent $\alpha\in (0,1]$.   This proves that
\[
\|\eP_\ep\|= O\bigl( \;\bigl(\,\ep\si(\ep)\;\bigr)^\alpha\,\bigr).
\]
The condition (\ref{eq: mesh}) now follows from the property (\ref{eq: spread2}) of a spread function.

From the choice of the points $P_k^\ep$ we deduce  that for any $k=1,\dotsc, n(\ep)$ we have
\[
-\ep\sqrt{2}<\dist(P_{k-1}^\ep,P_k^\ep)-\dist(Q_{k-1}^\ep,Q_k^\ep)  <\ep\sqrt{2}
\]
so that  by summing over $k$ we deduce 
\[
-\frac{2\sqrt{2}(b-a)}{\si(\ep)}\stackrel{(\ref{eq: subdivisions})}{\leq} -n(\ep)\ep\sqrt{2} <{\rm length}\,(\Gamma_\ep)- {\rm length}\,(\bsC_\ep)\leq n(\ep)\ep\sqrt{2}\stackrel{(\ref{eq: subdivisions})}{\leq} \frac{2\sqrt{2}(b-a)}{\si(\ep)}.
\]
The equality (\ref{eq: app-length}) now follows from the property (\ref{eq: spread1}) of a spread function.

Now we turn to the total curvature.  For any point $ P\in\bR^2$ we denote by $x(P)$, $y(P)$ its coordinates. For $k=1,\dotsc, n(\ep)$ we denote by $m_k^\ep$ the slope  of the segment $[P_{k-1}^\ep,P_k^\ep]$ and   by $\bar{m}_k^\ep$ the slope of the segment $[Q_{k-1}^\ep,Q_k^\ep]$,
\[
m_k^\ep=\frac{y(P_k^\ep)-y(P_{k-1}^\ep)}{x(P_k^\ep)-x(P_{k-1}^\ep)},\;\;\bar{m}_k^\ep=\frac{y(Q_k^\ep)-y(Q_{k-1}^\ep)}{x(Q_k^\ep)-x(Q_{k-1}^\ep)}.
\]
  Note that  for any $\ep>0$ and any $k=1,\dotsc, n(\ep)$ we have from the definition of a spread that
\[
\frac{1}{2}\ep\si(\ep)\leq  x\bigl(\, Q_{k}^\ep\,\bigr)-x\bigl(\,Q_{k-1}^\ep\,\bigr)\,\le \ep\si(\ep).
\]
Furthermore we have shown that
\[
\dist(Q_k^\ep, P_k^\ep) \le \ep \sqrt{2}.
\]
These two inequalities imply that
\[
|m_k^\ep- \bar{m}_k^\ep| = O\left(\frac{1}{\sigma(\ep)}\right)
\]
There exist  $\theta_k^\ep,\bar{\theta}^\ep_k\in (-\frac{\pi}{2},\frac{\pi}{2})$ such that
\[
m_k^\ep=\tan \theta^\ep_k,\;\;\bar{m}^\ep_k=\tan \bar{\theta}^\ep_k.
\]
The  formula for the tangent of a difference of angles  implies that
\[
\theta_k^\ep - \bar{\theta}_k^\ep = \arctan \left( \frac{m_k^\ep - \bar{m}_k^\ep}{1 + m_k^\ep \bar{m}_k^\ep} \right).
\]
Using the above equality and the fact that $|m_k^\ep- \bar{m}_k^\ep| = O\left(\frac{1}{\sigma(\ep)}\right)$, we see that there exists a positive constant $C$ independent of $\ep$ such that 
\[
|\theta_k^\ep - \bar{\theta}_k^\ep| \le \frac{C}{\sigma(\ep)}
\]
and therefore
\[
\bigl|\, K(\Gamma_\ep)-K(\bsC_\ep)\,\bigr|\le \frac{Cn(\ep)}{\si(\ep)}\stackrel{(\ref{eq: subdivisions})}{\leq} \frac{C(b-a)}{\ep\si(\ep)^2}.
\]
The  equality (\ref{eq: app-curv}) now follows from (\ref{eq: spread3}).
\end{proof}

\begin{corollary}
Let $S(\beta,\tau)$ be an  elementary set. Fix a spread function $\si(\ep)$  satisfying  the condition (\ref{eq: spread3}). For each $\ep$ we choose  compatible $\ep$-upper/lower profiles  $\Xi^\pm_\ep$ of $S$ with spread $\si(\ep)$. We denote by ${S}_\ep$ the $PL$ approximation of ${S}$ defined by these samples. Then
\begin{subequations}
\begin{equation}
\lim_{\ep \searrow 0}{\rm length}\,(\pa S_\ep)= {\rm length}\,(\pa S),
\label{eq: length}
\end{equation}
\begin{equation}
\lim_{\ep \searrow 0} K(\partial S_\ep) = K(\partial S).
\label{eq: curv}
\end{equation}
\end{subequations}
\label{cor: piece curv}
\end{corollary}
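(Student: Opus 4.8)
The plan is to use the obvious decomposition of the boundary of an elementary set into a top arc, a bottom arc and two vertical sides, and to reduce everything to Theorem~\ref{th: pl-approx1} applied separately to $\tau$ and to $\beta$, plus a direct analysis of the four ``corner'' contributions. Write $\Gamma_\tau,\Gamma_\beta$ for the graphs of $\tau,\beta$ over $[a,b]$ and $V_a=\{a\}\times[\beta(a),\tau(a)]$, $V_b=\{b\}\times[\beta(b),\tau(b)]$ for the (possibly degenerate) vertical sides, so that $\partial S$ is the concatenation of the four semialgebraic arcs $\Gamma_\beta,V_b,\Gamma_\tau,V_a$. Since $\Xi^\pm_\ep$ are compatible we have $x(\xi_k^+)=x(\xi_k^-)$ for all $k$, so $\partial S_\ep$ is the concatenation of $\bsC^-_\ep$ (the graph of $L_{\Xi^-_\ep}$), the vertical segment $V_{b,\ep}=[\xi_n^-,\xi_n^+]$, the curve $\bsC^+_\ep$ (graph of $L_{\Xi^+_\ep}$) and the vertical segment $V_{a,\ep}=[\xi_0^+,\xi_0^-]$. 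Theorem~\ref{th: pl-approx1}, applied once with $h=\tau$ and once with $h=\beta$ (our spread satisfies~(\ref{eq: spread3}) by hypothesis), already gives ${\rm length}(\bsC^\pm_\ep)\to{\rm length}(\Gamma_{\tau/\beta})$ and $K(\bsC^\pm_\ep)\to K(\Gamma_{\tau/\beta})$.

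The next step is to check that the four extreme sample points converge to the four corners of $\partial S$: $\xi_0^+\to(a,\tau(a))$, $\xi_0^-\to(a,\beta(a))$, $\xi_n^+\to(b,\tau(b))$, $\xi_n^-\to(b,\beta(b))$. This is short: the leftmost column of $P_\ep(S)$ has as $x$-range an interval of length $\ep$ containing $a$, it is a single stack by Proposition~\ref{pro: ivt2}, and its topmost pixel has center $y$-coordinate within $O(\ep)$ of $\sup\{\tau(x):x$ in that column$\}$, which tends to $\tau(a)$ by continuity of $\tau$; the other three are treated the same way. Given this, ${\rm length}(V_{a,\ep})=|y(\xi_0^+)-y(\xi_0^-)|\to|\tau(a)-\beta(a)|={\rm length}(V_a)$ and similarly for $V_b$, so adding up the four pieces yields the asserted limit~(\ref{eq: length}).

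For the total curvature I would use additivity of $K$ over a concatenation of arcs: $K(\partial S)=K(\Gamma_\tau)+K(\Gamma_\beta)+\sum_{c}\omega_c$, where the straight sides contribute nothing, $c$ runs over the at most four corner points, and $\omega_c$ is the exterior angle of $\partial S$ there; likewise $K(\partial S_\ep)=K(\bsC^+_\ep)+K(\bsC^-_\ep)+\sum_c\omega_{c,\ep}$. Combined with Theorem~\ref{th: pl-approx1}, the statement~(\ref{eq: curv}) reduces to $\omega_{c,\ep}\to\omega_c$ for each corner, and this is the step I expect to be the main obstacle. Consider $c=(a,\tau(a))$: here $\omega_{c,\ep}$ is the $S^1$-geodesic distance between the unit vectors of $[\xi_0^+,\xi_0^-]$ and $[\xi_0^+,\xi_1^+]$. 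The first of these is vertical (compatibility of the samples), hence matches the direction of $V_a$, so the task is to show the direction of $[\xi_0^+,\xi_1^+]$ converges to the one-sided tangent direction $\bt_\tau(a^+)$ of $\Gamma_\tau$ — which exists because $\tau$ is semialgebraic (a Puiseux expansion at $a^+$, with possibly vertical tangent). To see this I would reuse the estimate from the proof of Theorem~\ref{th: pl-approx1}: $\xi_0^+$ and $\xi_1^+$ lie within $\sqrt2\,\ep$ of genuine points of $\Gamma_\tau$ whose $x$-coordinates differ by an amount comparable to $x(\xi_1^+)-x(\xi_0^+)\in[\tfrac12\ep\si(\ep),\ep\si(\ep)]$, so the angular discrepancy between $[\xi_0^+,\xi_1^+]$ and the corresponding chord of $\Gamma_\tau$ is $O(1/\si(\ep))\to0$, while that chord's direction tends to $\bt_\tau(a^+)$ as both of its endpoints tend to $(a,\tau(a))$. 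Hence $\omega_{c,\ep}\to\omega_c$; the remaining three corners are identical, and a degenerate or mixed elementary set is handled by running the same argument over each block of the minimal good partition (with the convention that the boundary of a degenerate block is its defining arc traversed in both directions, so that the turn-around contributions $\pi$ at its endpoints also appear on both sides). With every term in the curvature bookkeeping converging, (\ref{eq: curv}) follows, and this completes the argument.
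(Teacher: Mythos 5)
Your proposal is correct and follows essentially the same route as the paper's proof: decompose $\partial S_\ep$ into the two interpolated graphs plus the two vertical sides, invoke Theorem~\ref{th: pl-approx1} for the graph pieces, and reduce the curvature statement to the convergence of the first and last chord slopes to the one-sided derivatives of $\beta$ and $\tau$ at $a$ and $b$. Your explicit treatment of the degenerate and mixed cases via the minimal good partition is a welcome extra detail that the paper's argument leaves implicit, but it does not change the method.
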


\begin{proof}  The   semialgebraic functions $\beta$ and $\tau$ are differentiable  outside a finite  subset of $(a,b)$ and the limits 
\[
\beta'(a):=\lim_{x\searrow a} \beta'(x), \;\; \tau'(a):=\lim_{x\searrow a} \tau'(x),
\]
\[
\beta'(b):=\lim_{x\nearrow b} \beta'(x), \;\; \tau'(b):=\lim_{x\nearrow b} \tau'(x)
\]
exist in $[-\infty,\infty]$.  Let
\[
\Xi^\pm_\ep=\bigl\{ \xi_0^\pm \prec\xi_1^\pm \prec\cdots \prec\xi_{n(\ep)}^\pm\,\bigr\},\;\;\xi_k^\pm=: (x_k^\pm, y_k^\pm).
\]
The compatibility condition implies that
\[
x_k^-=x_k^+=:x_k,\;\;y_k^-\leq y_k^+,\;\;\forall k=0,1,\dotsc, n.
\]
Let $\beta_\ep$ be the PL function whose graph is $L_{\Xi_\ep^-}$  and $\tau_\ep$  be the PL function whose graph is $L_{\Xi_\ep^+}$.   Let $m_i^\beta(\ep)$ indicate the slope of the $i$-th line segment of the graph of  $\beta_\ep$ and similarly let $m_i^\tau(\ep)$ indicate the slope of the $i$-th line segment of the graph of $\tau_\ep$. We have
\[
{\rm length}(\pa S_\ep) ={\rm length}\,(\Gamma_{\beta_\ep})+{\rm length}\,(\Gamma_{\tau_\ep})+\dist(\xi_0^-,\xi_0^+)+\dist (\xi_{n(\ep)}^-,\xi_{n(\ep)}^+\,).
\]
Theorem \ref{th: pl-approx1} implies that as $\ep\searrow 0$ we have
\[
{\rm length}\,(\Gamma_{\beta_\ep})\ra {\rm length}\,(\Gamma_{\beta}),\;\; {\rm length}\,(\Gamma_{\tau_\ep})\ra {\rm length}\,(\Gamma_{\tau}).
\]
Moreover, as $\ve\searrow 0$ we have
\[
\dist(\xi_0^-,\xi_0^+)\ra \dist(\beta(a),\tau(a)\,),\;\;\dist (\xi_{n(\ep)}^-,\xi_{n(\ep)}^+\,)\ra \dist(\,\beta(b),\tau(b)\,).
\]
This proves (\ref{eq: length}).

Similarly
\[
\begin{split}
K(\partial S_\ep) & = |\pi - \arctan(m_1^\beta(\ep)\,)| \\
&+ \sum_{i=2}^n |\arctan(m_i^\beta(\ep)\,) - \arctan(m_{i-1}^\beta(\ep)\,)| \\
&+ |\arctan(m_n^\beta(\ep)) - \pi| + |\pi - \arctan(m_1^\tau)| \\
&+ \sum_{i=2}^n |\arctan(m_i^\tau(\ep)\,) - \arctan(m_{i-1}^\tau(\ep)\,)| \\
&+ |\arctan(m_n^\tau(\ep)\,) - \pi|
\end{split}
\]
which can be rewritten as
\begin{equation}
\begin{split}
K(\partial S_\ep) & = |\pi - \arctan(m_1^\beta(\ep)\,)| + |\arctan(m_n^\beta(\ep)\,) - \pi| \\
&+ |\pi - \arctan(m_1^\tau(\ep)\,)| + |\arctan(m_n^\tau(\ep)\,) - \pi| \\
&+ K(\beta_\ep) + K(\tau_\ep)
\end{split}
\label{eq: epcurv}
\end{equation}
Theorem  \ref{th: pl-approx1} implies
\begin{equation}
\lim_{\ep \searrow 0} K(\beta_\ep) = K(\beta)\;\;\mbox{and}\;\; \lim_{\ep \searrow 0} K(\tau_\ep) = K(\tau).
\label{eq: piece conv}
\end{equation}
Now note that each line segment is defined by connecting two points in the pixelation of $\beta$ or $\tau$ over an interval of at most $\ep \sigma(\ep)$. As $\ep \to 0$ we have
\begin{equation}
\begin{split}
\lim_{\ep \searrow 0} m_1^\beta(\ep) = \beta'(a), \;\;\lim_{\ep \searrow 0} m_n^\beta(\ep) = \beta'(b),\\
\lim_{\ep \searrow 0} m_1^\tau(\ep) = \tau'(a), \;\;\lim_{\ep \searrow 0} m_n^\tau(\ep) = \beta'(b).
\end{split} 
\label{eq: slope convergence}
\end{equation}
Combining (\ref{eq: epcurv}), (\ref{eq: piece conv}), (\ref{eq: slope convergence}) we find that
\begin{equation}
\begin{split}
\lim_{\ep \searrow 0} K(\partial S_\ep) &= |\pi - \arctan(\beta'(a))| + |\arctan(\beta'(b)) - \pi| \\
& + |\pi - \arctan(\tau'(a))| + |\arctan(\tau'(b)) - \pi| \\
&+ K(\beta) + K(\tau).
\end{split}
\label{eq: final curv}
\end{equation}
Note that $|\pi - \arctan(\beta'(a))|$ is the value of the angle between the vertical line $x = a$ and the tangent line to the graph of  $\beta$ at $(a,\beta(a))$.  Similarly each other difference on the right hand side of (\ref{eq: final curv}) corresponds to an angle at one of the corners of $\partial S$.  Therefore the right hand side of the (\ref{eq: final curv}) is equal to the $K(\partial S)$, so the corollary holds.
\end{proof}

\section{Separation results}
\label{s: 3}
\setcounter{equation}{0}

In the previous section we have dealt only with  the elementary regions and we have investigated mainly  \emph{geometric} properties of these regions and their pixelation.  In this section we turn our attention to the relationship between the topologies of a semialgebraic set  and those of its pixelations.   

Surprisingly, this is a nontrivial matter. As shown in the introduction the homotopy type of a   planar set  may  be quite different from those of its pixelations and this  can happen even for a simple $PL$  set. The next result   provides a first ray of hope.  For any compact set $X\subset \bR^2$ we denote by $\eC(X)$ the  set of   connected components of $X$.

\begin{proposition}
Let $S \subset \bR^2$ be a compact  semialgebraic  set.  Then for sufficiently small $\ep$, the number of connected components of $P_\ep(S)$ agrees with the number of connected components of $S$.
\label{prop: separate}
\end{proposition}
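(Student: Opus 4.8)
The statement is really two inequalities: for $\ep$ small, (i) each connected component of $S$ lies in a single component of $P_\ep(S)$, and (ii) no two components of $S$ get merged. Inequality (i) is essentially trivial and holds for \emph{all} $\ep$: if $X$ is a connected component of $S$, then $X\subset P_\ep(S)$ (every point of $S$ lies in some pixel that touches $S$), and since $X$ is connected it lies in one component of $P_\ep(S)$; moreover every pixel meeting $X$ is attached to that component, so the ``piece of $P_\ep(S)$ coming from $X$'' is connected. The content is (ii): distinct components of $S$ should not be joined by a chain of pixels once $\ep$ is small enough.

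\textbf{Proof strategy for (ii).} Let $S$ have connected components $X_1,\dots,X_m$ (finitely many, since $S$ is compact semialgebraic). These are pairwise disjoint compact sets, so
\[
\delta:=\min_{i\neq j}\dist(X_i,X_j)>0,
\]
where $\dist$ is Euclidean distance between sets. The key observation is that $P_\ep(S)$ is contained in the closed $\tfrac{\ep\sqrt2}{?}$-neighborhood of $S$ — more precisely, every point of a pixel that meets $S$ is within distance $\ep\sqrt2$ of $S$ (the diameter of an $\ep$-pixel is $\ep\sqrt2$). Hence $P_\ep(S)\subset \{\bp:\dist(\bp,S)\le \ep\sqrt2\}=:N_\ep$. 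Now choose $\ep$ so small that $\ep\sqrt2<\delta/2$. Then $N_\ep$ is the disjoint union of the sets $N_\ep^{(i)}:=\{\bp:\dist(\bp,X_i)\le\ep\sqrt2\}$: these are pairwise disjoint because a point within $\delta/2$ of $X_i$ and within $\delta/2$ of $X_j$ would force $\dist(X_i,X_j)<\delta$. Since the $N_\ep^{(i)}$ are separated (positive distance apart) and $P_\ep(S)\subset\bigsqcup_i N_\ep^{(i)}$, the components of $P_\ep(S)$ each lie in a single $N_\ep^{(i)}$, so the component of $P_\ep(S)$ containing $X_i$ is disjoint from the one containing $X_j$. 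Combined with (i), this gives a bijection $\eC(S)\to\eC(P_\ep(S))$, sending $X_i$ to the component of $P_\ep(S)$ that contains it; surjectivity holds because every pixel of $P_\ep(S)$ meets $S$, hence meets some $X_i$ and is therefore attached to the image of $X_i$.

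\textbf{Main obstacle.} There isn't a serious one — this is the ``easy direction'' of the topology comparison, and the only genuine ingredient is finiteness of $\eC(S)$ (so that $\delta>0$), which is immediate from compactness and the fact that a compact semialgebraic set has finitely many connected components. The subtlety flagged in the introduction — that $b_1(P_\ep(S))$ can differ wildly from $b_1(S)$ — concerns higher homotopy/homology and is exactly what does \emph{not} obstruct $b_0$; the ``holes'' created by pixelation are always enclosed within a single component, so they never split or merge components. I would therefore write the proof exactly as above: set $\delta=\min_{i\ne j}\dist(X_i,X_j)$, observe $P_\ep(S)\subset N_\ep$ with $N_\ep$ disconnecting into neighborhoods of the $X_i$ once $\ep\sqrt2<\delta/2$, and read off the bijection.
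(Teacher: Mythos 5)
Your proof is correct and follows essentially the same route as the paper: the same natural map $\eC(S)\to\eC(P_\ep(S))$, with surjectivity read off from the fact that every pixel of $P_\ep(S)$ meets $S$. The only difference is that the paper merely asserts injectivity for small $\ep$, whereas you supply the justification (the $\delta$-separation of components and the inclusion $P_\ep(S)\subset N_\ep$), which is a welcome addition rather than a departure.
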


\begin{proof}  We have a natural map $\eC(S)\ra \eC(\,P_\ep(S)\,)$ that associates to each connected component $C$  of $S$ the  unique connected component  of $P_\ep(S)$ containing $C$. For $\ep$ sufficiently small this map is injective. Since $P_\ep(S)$ contains only pixels that intersect $S$, we deduce that each connected component of  $P_\ep(S)$ contains at least one connected component of $S$. Thus,  $P_\ep(S)$ has  at most as many components as $S$.
\end{proof}

The  above  result  guarantees that   the zeroth Betti number of a compact semialgebraic set coincides with those of its sufficiently fine pixelations.  Proposition \ref{prop: separate} also suggests that, for small $\ep$, the only way that the homotopy  type of $P_\ep(S)$ can disagree with that of $S$ is if $P_\ep(S)$ has  \emph{holes}, i.e.,  cycles in $P_\ep(S)$ that are not contained in the image of the inclusion induced morphism
\[
H_1(S,\bZ)\ra H_1\bigl(\, P_\ep(S),\bZ\,\bigr).
\]
Thus, the recovery of $S$ from $P_\ep(S)$ will depend on distinguishing the cycles of $P_\ep(S)$ that correspond to real cycles from $S$ from those that are merely artifacts of the pixelation. To discard  these holes, we  adopt a strategy inspired from Morse theory.

\begin{definition}
Let $S\subset \bR^2$ be a compact set and $\ep>0$.

\begin{enumerate}

\item For every $\ep$-generic $x$ we set  
\[
\bn_\ep(x) =\bn_{S,\ep}:= \# \text{ of connected components of }C_\ep(S,x).
\]
(When the  set $S$  is  understood from context we use the simpler notation $\bn_\ep$ instead of $\bn_{S,\ep}$.) We will refer to $\bn_\ep(x)$ as the \emph{stack counter function} of $S$.

\item For any $x_0\in \bR$  we define 
\[
\bn(x_0) =\bn_S(x):= \# \text{ of components of  the intersection of $S$ with the vertical line $x=x_0$}.
\]
We will refer to $\bn_S$ the \emph{component counter} of  $S$.

\item A \emph{jumping point}  of $\bn_S$ is a  real number $x_0$ such that  
\[
\bn_S(x_0)\neq \bn_S(x_0^-):=\lim_{x\nearrow x_0} \bn_S(x) \;\;\mbox{or}\;\;\bn_S(x)\neq \bn_S(x_0^+):=\lim_{x\searrow x_0}\bn_S(x).
\]
 We denote by $\eJ_S$ the set of jumping points of $\bn_S$. We will refer to $\eJ_S$ as the \emph{jumping set} of $S$.

\item A \emph{jumping point} of $\bn_\ep$ is a real number $x_0\in\ep\bZ+\frac{\ep}{2}$ such that  
\[
\bn_\ep(x_0-\ep)\neq \bn_\ep(x_0).
\]
 We denote by $\eJ_{S,\ep}$ the set of jumping points of $\bn_{S,\ep}$.  We will refer to it as the $\ep$-\emph{jumping set} of $S$.
\end{enumerate}\qed
\end{definition}

Let us point out that if $S$ is semialgebraic, then its jumping set $\eJ_S$ is finite and it is contained in the set of critical values of the restriction to $S$ of the function $\bh(x,y)=x$. The function $\bn_\ep$ tells us how many stacks are in a column.   The jumps  of $\bn_\ep$ are a first indicator of the presence of    cycles in $P_\ep(S)$.  To decide  whether they are    holes, as opposed to cycles coming from $S$  we will rely on the next key technical result.

\begin{theorem}[Separation Theorem]
Let $f,g:[a,b]\ra \bR$ be  two semialgebraic continuous  functions such that $f(x) < g(x)$, $\forall x\in[a,b]$.   Denote by $G$ the union of the graphs of $f$ and $g$.  Fix $L>0$, $\alpha\in (0,1]$  and $x_0\in [a,b]$ such that either
\begin{equation}
|g(x)-g(y)| \leq L|x-y|^\alpha,\;\; \text{ or }\;\; |f(x)-f(y)| \leq L|x-y|^\alpha,\;\; \forall x,y\in [a,b]
\label{eq: holder}
\end{equation}
and 
\begin{equation}
 g(x_0) - f(x_0 )\leq g(x)-f(x),\;\;\forall x\in[a,b],
\label{eq: gap0}
\end{equation}
Then  for any  $\ep>0$ such that
\begin{equation}
3\ep +L\ep^\alpha <g(x_0) - f(x_0 )
\label{eq: gap1}
\end{equation}
and any  $\ep$-generic $x \in [a,b] \setminus \ep \bZ$  the column $C_\ep(G,x_0)$ has two components.  In other words,  if
\[
\min_{x\in[a,b]}\bigl(\, g(x)-f(x)\,\bigr)\geq 3\ep +L\ep^\alpha,  
\]
then for any $\ep$-generic $x\in [a ,b]$  we have
\[
\bn_{G,\ep}(x)=\bn_G(x).
\]
\label{th: sep}
\end{theorem}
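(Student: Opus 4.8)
The plan is to reduce the statement to a count of pixel rows inside a single column of $P_\ep(G)$, and then to feed the H\"older bound (\ref{eq: holder}) together with the minimality of the gap at $x_0$, condition (\ref{eq: gap0}), into that count. The starting point is the structural identity $P_\ep(G)=P_\ep(f)\cup P_\ep(g)$, which gives, for every $\ep$-generic $x$, that $C_\ep(G,x)=C_\ep(f,x)\cup C_\ep(g,x)$. By Theorem \ref{thm: ivt} each of $C_\ep(f,x)$ and $C_\ep(g,x)$ is a single stack, so $C_\ep(G,x)$ has at most two components; and since $f<g$ on $[a,b]$ a vertical line meets $G$ in exactly two points, whence $\bn_G(x)=2$. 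Thus it suffices to show that, for the running $\ep$-generic $x\in[a,b]$, the pixels of the column over $x$ touching $\Gamma(g)$ form a stack lying strictly above, and separated by at least one empty pixel row from, the stack of pixels touching $\Gamma(f)$.

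Fix such an $x$, put $k=\lfloor x/\ep\rfloor$ and $I:=[k\ep,(k+1)\ep]\cap[a,b]$; this interval has length $\le\ep$, and over it both $f$ and $g$ attain their extrema, say $m_g:=\min_I g=g(t_1)$ and $M_f:=\max_I f=f(t_2)$ with $t_1,t_2\in I$. A direct bookkeeping with floors and ceilings shows that the lowest pixel of $C_\ep(g,x)$ lies in a row $r_g$ with $\ep r_g\ge m_g$, while the highest pixel of $C_\ep(f,x)$ lies in a row $r_f$ with $\ep(r_f-1)\le M_f$; therefore $r_g-r_f\ge \tfrac{1}{\ep}(m_g-M_f)-1$. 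Since the two stacks consist of consecutive rows, they become distinct connected components of the column as soon as $r_g-r_f\ge 2$, and it is precisely this inequality, traced through the floor/ceiling estimates, that yields the threshold $3\ep$ on the right of (\ref{eq: gap1}).

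It remains to bound $m_g-M_f$ from below, and this is where the two hypotheses enter. Suppose first the H\"older bound in (\ref{eq: holder}) holds for $g$. Using $|t_1-t_2|\le\ep$ we get
\[
m_g-M_f=g(t_1)-f(t_2)=\bigl(g(t_2)-f(t_2)\bigr)-\bigl(g(t_2)-g(t_1)\bigr)\ \ge\ \bigl(g(x_0)-f(x_0)\bigr)-L\ep^\alpha ,
\]
where the parenthesised difference is bounded below by (\ref{eq: gap0}) at $t=t_2$ and the subtracted term by (\ref{eq: holder}). If instead the H\"older bound holds for $f$, one writes $g(t_1)-f(t_2)=\bigl(g(t_1)-f(t_1)\bigr)+\bigl(f(t_1)-f(t_2)\bigr)$ and argues symmetrically. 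Combining this with the row count, condition (\ref{eq: gap1}) gives $m_g-M_f>3\ep$, hence $r_g-r_f\ge 2$; so the column over $x$ has exactly two components and $\bn_{G,\ep}(x)=2=\bn_G(x)$. The closing assertion of the theorem is the special case obtained by taking for $x_0$ a minimiser of $g-f$ on $[a,b]$.

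I expect the row-counting step to be the only real difficulty: one must keep the floor/ceiling estimates sharp enough to land on the stated constant, and one must ensure that the extremisers $t_1$ and $t_2$ lie in a common interval of length $\le\ep$ --- which is exactly why one works over $[k\ep,(k+1)\ep]\cap[a,b]$ and not over all of $[a,b]$. Everything else is routine.
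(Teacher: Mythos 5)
Your proposal is correct and follows essentially the same route as the paper: reduce to $C_\ep(G,x)=C_\ep(f,x)\cup C_\ep(g,x)$ with each column a single stack by Theorem \ref{thm: ivt}, then show the two stacks are separated by comparing the minimum of $g$ and the maximum of $f$ over the width-$\ep$ strip containing $x$, using the telescoping $g(t_1)-f(t_2)=\bigl(g(t_2)-f(t_2)\bigr)-\bigl(g(t_2)-g(t_1)\bigr)$ together with (\ref{eq: gap0}), (\ref{eq: holder}) and (\ref{eq: gap1}). The only cosmetic difference is that you count pixel rows while the paper compares altitudes of pixel centers, and you handle the symmetric H\"older case directly rather than via the reflection $(f_1,g_1)=(-g,-f)$; both lead to the same $3\ep+L\ep^\alpha$ threshold.
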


\begin{proof} We deal with the case that $|g(x)-g(y)| \leq L|x-y|^\alpha$. For any $\ep$-generic $x$ we denote by $T_\ep(f,x)$ (resp. $B_\ep(f,x)$) the  altitude of the center of the top (resp. bottom) pixel of the column $C_\ep(f,x)$. $B_\ve(g,x)$ and $T_\ve(g,x)$ are defined similarly. 

For a $\ve$-generic $x$ we have
\[
C_\ep(G, x) = C_\ep(f, x) \cup C_\ep(g, x),
\]
and furthermore,  Theorem \ref{thm: ivt} implies that each of these columns is connected.  Therefore $C_\ep(G, x)$ will have two components exactly when the columns  $C_\ep(f,x)$ and $C_\ep(g,x)$ do not intersect.   Since $f \le g$ and $x$ is $\ep$-generic, this will occur when 
\[
T_\ep(f,x) < B_\ep(g,x),
\]
or equivalently,
\[
B_\ep(g, x) - T_\ep(f,x) >\ep.
\]
Now fix $x\in[a,b]$ and let $i \in \bZ$ such that $i \ep < x < (i+1)\ep$.   Choose $x_f,x_g\in [i\ep, (i+1)\ep
]$ such that
\[
f(x_f)=\max_{x\in [i\ep,(i+1)\ep]} f(x),\;\; g(x_g)= \min_{x\in [i\ep,(i+1)\ep]} g(x).
\]
Therefore we have
\[
B_\ep(g,x)\geq g(x_g)-\ep,\;\;  T_\ep(f,x)\leq f(\,x_f\,)+\ep,
\]
so that
\[
B_\ep(g,x)-T_\ep(f,x)\geq  g(x_g)-f(x_f) -2\ep.
\]
Thus
\[
B_\ep(g,x)-T_\ep(f,x)\geq g(x_g)-g(x_f)+g(\,x_f\,)-f(\,x_f\, ) -2\ep
\]
\[
\stackrel{(\ref{eq: gap0})}{\geq}  g(x_g)-g(x_f)+ g(x_0)-f(x_0)-2\ep
\]
\[
\stackrel{(\ref{eq: holder})}{\geq} g(x_0)-f(x_0)-L\ep^\alpha -2\ep\stackrel{(\ref{eq: gap1})}{>}\ep.
\]
which completes the proof for the case that $|g(x)-g(y)| \leq L|x-y|^\alpha$.  

The case when  $|f(x)-f(y)| \leq L|x-y|^\alpha$  can be obtained from the above case  by working with a new pair of functions $g_1=-f$ and $f_1=-g$.
\end{proof}

This important Separation Theorem can be used to prove the following two results, which will tell us exactly when $\bn$ and $\bn_\ep$ correspond, using only information from the pixelation.  With these theorems we will be able to distinguish real cycles from the original set from fake cycles created by the pixelation.

\begin{theorem}
Let $S\subset \bR^2$ be a compact semialgebraic set with jumping set $\eJ_S$. Then there exist $\kappa_0=\kappa_0(S)\in (0,1]$, $\nu_0=\nu_0(S)>0$,  $\ep_0=\ep_0(S)>0$, \emph{depending only on $S$}, such that,  if  $0<\ep <\ep_0$ and $x$ is $\ep$-generic  and satisfies 
\[
\dist(x,\eJ_S)\geq \nu_0\ep^{\kappa_0},
\]
 then $\bn_{S,\ep}(x) = \bn_S(x)$.
\label{thm: noisebound}
\end{theorem}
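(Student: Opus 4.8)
The plan is to localize the problem to the region between two consecutive jumping points and then feed the fiberwise description of $S$ into the Separation Theorem (Theorem~\ref{th: sep}). First I would fix a compact interval $[a,b]\supset\bh(S)$ and write $\eJ_S=\{j_1<\cdots<j_N\}$, arranging that it also contains the endpoints of the maximal intervals on which $\bn_S\equiv 0$. Over each connected component $I=(j_i,j_{i+1})$ of $\bR\setminus\eJ_S$ the counter $\bn_S$ is constant, equal to some integer $k=k(I)$, and $S\cap(I\times\bR)$ is a disjoint union of $k$ elementary bands $S_l=S(\beta_l,\tau_l)$, $l=1,\dots,k$, ordered so that $\beta_1\le\tau_1<\beta_2\le\tau_2<\cdots<\beta_k\le\tau_k$ on $I$, with all $\beta_l,\tau_l$ continuous and semialgebraic on $I$; this is the semialgebraic incarnation of the decomposition behind {\bf Principle 2}, and establishing it cleanly is the main point (see below). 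For $l<l'$ put $G_{l,l'}:=\beta_{l'}-\tau_l$, so that $G_{l,l'}\ge G_{l,l+1}>0$ on $I$.

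Next comes the quantitative input. Each $G_{l,l+1}$ is continuous and semialgebraic on $\bar I$ and its zero set is contained in $\{j_i,j_{i+1}\}\subset\eJ_S$, so by the {\L}ojasiewicz inequality (Appendix~\ref{s: a}) there are $c_{I,l}>0$ and an integer $M_{I,l}\ge 1$ with $G_{l,l+1}(x)\ge c_{I,l}\,\dist(x,\eJ_S)^{M_{I,l}}$ on $I$, hence the same estimate holds for every $G_{l,l'}$. Since continuous semialgebraic functions on a compact interval are H\"older, I can also fix a single exponent $\alpha\in(0,1]$ and constant $L>0$ serving as H\"older data for all of the finitely many functions $\beta_l,\tau_l$ on all of the $\bar I$. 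Now set $M:=\max_{I,l}M_{I,l}$, $\kappa_0:=\alpha/M\in(0,1]$, choose $\nu_0\ge 2$ so large that $c_{I,l}(\nu_0/2)^{M_{I,l}}\ge 3+L$ for all $I,l$, and take $\ep_0\le 1$. Given an $\ep$-generic $x$ with $\dist(x,\eJ_S)\ge\rho:=\nu_0\ep^{\kappa_0}$: if $x\notin\bh(S)$ then $\rho>\ep$ forces the whole strip $\eS_\ep(x)$ to miss $\bh(S)$, so $C_\ep(S,x)=\varnothing$ and $\bn_{S,\ep}(x)=0=\bn_S(x)$; otherwise $x\in I$ for a unique component, $j_{i+1}-j_i\ge 2\rho$, and $[a',b']:=[j_i+\rho/2,\ j_{i+1}-\rho/2]$ is a compact subinterval of $I$ containing both $x$ and the strip $\eS_\ep(x)$, with $\dist(z,\eJ_S)\ge\rho/2$ for every $z\in[a',b']$. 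A short computation using $\kappa_0 M_{I,l}\le\alpha$ and $\ep<\ep_0\le 1$ then gives $\min_{[a',b']}G_{l,l'}\ge c_{I,l}(\rho/2)^{M_{I,l}}\ge(3+L)\ep^\alpha\ge 3\ep+L\ep^\alpha$ for all $l<l'$.

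Finally I would apply Theorem~\ref{th: sep} on $[a',b']$ to each pair $f=\tau_l$, $g=\beta_{l'}$ with $l<l'$ (using the H\"older bound on $g$): the columns $C_\ep(\tau_l,x)$ and $C_\ep(\beta_{l'},x)$ are then disjoint, and the proof of that theorem in fact yields $T_\ep(\tau_l,x)<B_\ep(\beta_{l'},x)$. By (the proof of) Proposition~\ref{pro: ivt2}, $C_\ep(S_l,x)$ is a single stack running from $B_\ep(\beta_l,x)$ to $T_\ep(\tau_l,x)$; since $\eS_\ep(x)\subset[a',b']$, every pixel of $P_\ep(S)$ meeting the vertical line over $x$ meets $\bigsqcup_l S_l|_{[a',b']}$, so $C_\ep(S,x)=\bigsqcup_{l=1}^k C_\ep(S_l,x)$, and the inequalities $T_\ep(\tau_l,x)<B_\ep(\beta_{l'},x)$ show that these $k$ stacks are pairwise disjoint and linearly ordered by height. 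Hence $\bn_{S,\ep}(x)=k=\bn_S(x)$, which is the assertion with the constants $\kappa_0,\nu_0,\ep_0$ chosen above.

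The step I expect to be the real obstacle is the structural claim in the first paragraph: that over each component of $\bR\setminus\eJ_S$ the generic semialgebraic set $S$ is exactly a finite disjoint union of elementary bands whose consecutive gap functions are strictly positive there and vanish only at points of $\eJ_S$. This is where genericity and the one-variable semialgebraic structure theory must be used carefully — in particular to rule out a critical value of $\bh|_S$ sitting strictly between two jumping points at which two bands would touch, or a band be created or destroyed, without $\bn_S$ changing. Once that decomposition and the accompanying {\L}ojasiewicz bound are in place, the remainder is a routine application of the Separation Theorem together with the bookkeeping of uniform constants.
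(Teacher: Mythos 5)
Your proposal follows essentially the same route as the paper's proof: decompose $S$ over each interval between consecutive jumping points into an ordered stack of elementary bands $S(\beta_l,\tau_l)$, bound the consecutive gap functions from below via the {\L}ojasiewicz inequality in terms of the distance to $\eJ_S$, fix uniform H\"older data for the finitely many boundary functions, and then choose $\kappa_0$, $\nu_0$, $\ep_0$ so that the hypothesis $3\ep+L\ep^\alpha<\min(g-f)$ of the Separation Theorem is met. Your write-up is somewhat more careful about the bookkeeping (the case $x\notin\bh(S)$, keeping the whole strip $\eS_\ep(x)$ inside the subinterval, and flagging the band decomposition as the point needing justification), but the substance and the key lemmas are identical to the paper's argument.
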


\begin{proof} Let  $x_0<x_1<\cdots <x_\ell$ be the jumping points of  $\bn=\bn_S$.   We set
\[
\Delta x_i:=x_i-x_{i-1},\;\;\forall i=1,\dotsc, \ell,\;\;\Delta:=\min_{1\leq i\leq \ell} \Delta x_i.
\]
 Note that $\bn(x)$ is constant on each of the intervals $(x_{i-1},x_i)$.  For $i=1,\dotsc, \ell$   we set
\[
S_i:=\bigl\{ (x,y)\in S;\;\;x\in [x_{i-1},x_i]\,\bigr\}.
\]
The set $S_i$ is a  union of   elementary sets 
\[
S(\beta_{i,j}, \tau_{i,j}),\;\;j=0,\dots, p_i,
\]
over the same interval $[x_{i-1},x_i]$ ,  ``stacked one above the other'', i.e.,
\begin{equation}
\beta_{i,0}(x)\leq \tau_{i,0}(x)< \beta_{i,1}(x) \leq \tau_{i,1}(x)< \cdots < \beta_{i,p_i}(x)\leq \tau_{i,p_i}(x),\;\;\forall x\in(x_{i-1},x_i).
\label{eq: orders}
\end{equation}
From  Proposition \ref{pro: ivt2} we deduce that for any $\ep$-generic $x\in (x_{i-1},x_i)$ we have $\bn(x)=p_i$. 

Both of the functions $\beta_{i,j}$ and $\tau_{i,j}$ are continuous and semialgebraic.  For any $i=1,\dotsc, \ell$, any $j=1,\dots p_i$ and any $ \hbar\in (0,\frac{\Delta}{4})$ we denote by $\gamma_{i,j}(\hbar)$ the minimum of $\beta_{i,j}-\tau_{i,j-1}$ on the interval $[x_{i-1}+\hbar, x_i-\hbar]$.  Using {\L}ojasewicz's inequality (\ref{Loja})  in the special  case $f(\hbar)=\gamma_{i,j}(\hbar)-\gamma_{i,j}(0)$ and $g(\hbar)=\hbar$ we deduce that there exists $C=C(S)>0$ and $r=r(S)\in\bZ_{>0}$ such that for any $ i=1,\dotsc, \ell$, any $j=1,\dotsc, p_i$ and any $\hbar\in (0,\frac{\Delta}{4})$ we have
\begin{equation}
\gamma_{i,j}(\hbar) > C \hbar^{r}.
\label{eq: gamma1}
\end{equation}
Fix $L>0$ and $\alpha>0$  such that for any $i=1,\dotsc, \ell$, $j=1,\dotsc, p_i$ and any $x,y\in [x_{i-1},x_i]$ we have
\[
|\beta_{i,j}(x)-\beta_{i,j}(y)|+|\tau_{i,j}(x)-\tau_{i,j}(y)|\leq L|x-y|^\alpha.
\]
Fix $\kappa_0>0$ such that $r\kappa_0<\min(1,\alpha)$.   Since 
\[
\lim_{\ve\searrow 0} \frac{\ve^{r\kappa_0}}{3\ep+L\ep^\alpha}=\infty,
\]
we  can  choose  $\ep_0>0$,  $\nu_0>0$ such that  
\begin{equation}
C\bigl(\,\nu_0\ep^{\kappa_0}\,\bigr)^{r}> 3\ep+L\ep^\alpha,\;\;\forall \ep\in (0,\ep_0].
\label{eq: gamma2}
\end{equation}
The desired  follows by  letting $\hbar= \nu_0\ve_0^{\kappa_0}$ in (\ref{eq: gamma1})  and then invoking   (\ref{eq: gamma2}) and Theorem \ref{th: sep}.
\end{proof}

\begin{definition}
The constant $\kappa_0(S)$ guaranteed by Theorem \ref{thm: noisebound} is called the \emph{separation exponent} of the set $S$.\qed
\label{def: sep constant}
\end{definition}

Theorem \ref{thm: noisebound} tells us that the jumps of $\bn_\ep$ occur within $\nu_0 \ep^{\kappa_0-1}$ pixels  from the jumps in $\bn$.  A priori, it could be possible  that, given a jumping point $x_0$ of $\bn$,  there is no jump   in $\bn_\ep$  within $\nu_0 \ep^{\kappa_0}$ pixels   of $x_0$.  Our next theorem shows that in fact this cannot happen.

\begin{theorem}
Let $S$ be a generic compact semialgebraic   set, and $\ep_0=\ep_0(S)$, $\nu_0=\nu_0(S)$ as in Theorem \ref{thm: noisebound}. Let $\kappa_0=\kappa_0(S)$ be the separation exponent of $S$. Then, there exist $\ep_1=\ep_1(S)>0$ such that  if $\ep<\min(\ep_0,\ep_1)$ and $x_0$ is a jumping point of $\bn=\bn_S$, then $\bn_\ep=\bn_{S,\ep}$ has at least one jumping point  in the interval $[x_0 - \nu_0\ep^{\kappa_0}, x_0 + \nu_0\ep^{\kappa_0}]$.
\label{th: critgen}
\end{theorem}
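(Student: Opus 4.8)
The plan is to argue by contradiction: suppose that near the jumping point $x_0$ of $\bn_S$ the stack counter function $\bn_{S,\ep}$ is constant on the whole interval $[x_0-\nu_0\ep^{\kappa_0},x_0+\nu_0\ep^{\kappa_0}]$, and derive that $\bn_S$ could not have jumped at $x_0$ after all. First I would use the fact that $x_0$ is a jumping point and that $x_0\pm\nu_0\ep^{\kappa_0}$ are, for small $\ep$, strictly between $x_0$ and its neighbouring jumping points, so that by Theorem \ref{thm: noisebound} we have $\bn_{S,\ep}(x_0-\nu_0\ep^{\kappa_0})=\bn_S(x_0^-)$ and $\bn_{S,\ep}(x_0+\nu_0\ep^{\kappa_0})=\bn_S(x_0^+)$. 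If $\bn_{S,\ep}$ had no jump in between, then these two values agree, so it suffices to show that $\bn_S(x_0^-)=\bn_S(x_0^+)$ implies that $x_0$ is \emph{not} a jumping point, i.e., that also $\bn_S(x_0)$ equals this common value. In other words, the real content is a \emph{local} statement at a single stratified-Morse critical point: if the number of components of the vertical slice of $S$ is the same just to the left and just to the right of $x_0$, then it is also unchanged exactly at $x_0$.

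The key step is the local analysis at $x_0$ using the genericity hypothesis. Over a small punctured neighbourhood $(x_0-\delta,x_0)\cup(x_0,x_0+\delta)$ the set $S$ is a stack of elementary regions $S(\beta_{i,j},\tau_{i,j})$ as in \eqref{eq: orders}, and the passage through $x_0$ is governed by the critical points of $\bh|_S$ lying on the line $\{x=x_0\}$. I would enumerate the ways a critical point can change $\bn_S$: a face-critical point (a local max/min of $\bh$ on a face) creates or destroys one component of the slice as $x$ crosses $x_0$; an arc-critical point (vertical tangency on an arc) merges or splits two components; a vertex is analyzed via its local conical structure $C_\infty(v,S)$, where condition $(\bsC_0)$ forbids the degenerate vertical-tangency behaviour. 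The point is that each such event changes $\bn_S$ monotonically in a definite direction on one side versus the other, so that $\bn_S(x_0^-)=\bn_S(x_0^+)$ forces the net effect of \emph{all} critical points on $\{x=x_0\}$ to cancel — but since no two critical points of a stratified Morse function share a level set, there is exactly one such critical point, and a single event cannot cancel itself. Hence $\bn_S(x_0^-)=\bn_S(x_0^+)$ is impossible at a genuine jumping point; equivalently $\bn_S(x_0)$ differs from at least one of the one-sided limits, and the jump must be detected by $\bn_{S,\ep}$ within the claimed window. The threshold $\ep_1$ is chosen so that $2\nu_0\ep^{\kappa_0}<\Delta/2$ (with $\Delta$ the minimal gap between jumping points) and so that the elementary-region description above is valid at scale $\nu_0\ep^{\kappa_0}$; combined with $\ep<\ep_0$ this makes every invocation of Theorem \ref{thm: noisebound} legitimate.

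A cleaner way to package the last step, which I would probably adopt, is to avoid the case-by-case enumeration and instead argue via the pixelation directly: pick $\ep$-generic points $x_-<x_0<x_+$ with $x_\pm$ within $\nu_0\ep^{\kappa_0}$ of $x_0$, apply Theorem \ref{thm: noisebound} at $x_\pm$ to get $\bn_{S,\ep}(x_\pm)=\bn_S(x_0^\mp{}^\pm)$ (the appropriate one-sided limits), and then observe that if $\bn_{S,\ep}$ were constant on the lattice of half-integer points between $x_-$ and $x_+$ then $\bn_S$ would take the same value on $(x_0-\delta,x_0)$ and $(x_0,x_0+\delta)$; since $x_0\in\eJ_S$ means precisely that at least one of $\bn_S(x_0^-),\bn_S(x_0^+)$ differs from $\bn_S(x_0)$, and since by genericity the local model at $x_0$ has $\bn_S(x_0)$ strictly between (or equal to one of) the one-sided values with the two sides \emph{not} both equal, we reach a contradiction.

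The main obstacle I anticipate is the vertex case: for a vertex $v=(x_0,y_0)$ the behaviour of $\bn_S$ across $x_0$ depends on the full local conical structure of $S$ at $v$ — how many arcs and faces emanate, and on which side of the vertical line $\{x=x_0\}$ each lies — and one must check that condition $(\bsC_0)$ (together with the dimension and boundary-compatibility axioms of a good stratification) rigidly controls this so that a vertex also produces a \emph{single}, non-self-cancelling change in $\bn_S$, exactly as a smooth critical point does. Making this precise will require a careful description of the local normal/tangential Morse data of $\bh|_S$ at $v$ in the sense of Goresky--MacPherson, and ruling out configurations in which components simultaneously appear and disappear at $v$ in a way that leaves $\bn_S(x_0^-)=\bn_S(x_0^+)$ while $\bn_S(x_0)$ happens to coincide with both.
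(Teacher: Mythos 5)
There is a genuine gap, and it sits exactly where you anticipated trouble. Your whole argument reduces to the claim that at a generic critical point one cannot have $\bn_S(x_0^-)=\bn_S(x_0^+)$, so that comparing $\bn_{S,\ep}$ at the two endpoints $x_0\pm\nu_0\ep^{\kappa_0}$ already yields a contradiction. That claim is false. Take $S$ to be the union of two straight segments crossing transversally at a point $p_0=(x_0,y_0)$, neither segment vertical and the four endpoints at pairwise distinct abscissae (locally $\{y^2=x^2\}$ near the origin). Then $p_0$ is a vertex whose limit tangent lines are non-vertical, so ($\bsC_0$) holds, it is the unique critical point on $\{x=x_0\}$, and $S$ is generic; yet $\bn_S(x_0^-)=\bn_S(x_0^+)=2$ while $\bn_S(x_0)=1$, so $x_0\in\eJ_S$ with equal one-sided limits. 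Your heuristic that ``a single event cannot cancel itself'' is precisely what fails: one transversal crossing simultaneously merges the two components arriving from the left and splits them again to the right, leaving the one-sided counts equal while the count at $x_0$ itself dips. For such a point your endpoint comparison gives $\bn_{S,\ep}(x_-)=\bn_{S,\ep}(x_+)$ and no contradiction, even though the conclusion of the theorem is still true there (the pixelations of the two arcs coalesce into a single stack near $x_0$, so $\bn_\ep$ does jump).

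The missing ingredient --- and the paper's actual key step --- is to evaluate $\bn_{S,\ep}$ not only at the edges of the window but at an $\ep$-generic point $x_0'(\ep)$ within $\ep$ of $x_0$ itself, and to prove $\bn_{S,\ep}(x_0'(\ep))=\bn_S(x_0)$: the $\ep$-stacks over $x_0'(\ep)$ of the elementary regions that meet at the critical point all contain the pixel containing $p_0$ and hence coalesce, while the remaining stacks stay pairwise disjoint once $\ep<\ep_1$ (this is where $\ep_1$ is manufactured). Comparing this with $\bn_{S,\ep}(x_0'')=\bn_S(x_0'')=\bn_S(x_0^-)$ at an $\ep$-generic point $x_0''$ just outside the window (legitimate by Theorem \ref{thm: noisebound}) forces a jump of $\bn_\ep$ inside $[x_0-\nu_0\ep^{\kappa_0},x_0]$ whenever $\bn_S(x_0^-)\neq\bn_S(x_0)$; a separate short argument handles the birth of an isolated component ($\bn_S(x_0^-)<\bn_S(x_0)$), and a reflection in the $y$-axis handles the case where the jump is only on the right. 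If you prefer to keep your contradiction framing, the same computation is the fix: constancy of $\bn_{S,\ep}$ on the window together with $\bn_{S,\ep}(x_0'(\ep))=\bn_S(x_0)$ yields $\bn_S(x_0)=\bn_S(x_0^-)=\bn_S(x_0^+)$, contradicting $x_0\in\eJ_S$. Without some determination of $\bn_{S,\ep}$ at $x_0$ itself, the argument cannot close.
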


\begin{proof} Fix a good stratification $\eF$ of $S$  such that the function $\bh(x,y)=x$ is a stratified Morse function with respect to $(S,\eF)$.  Then there exists    exactly one critical point  $p_0\in S$  of $\bh$ such that $\bh(p_0)=x_0$.  Let $p_0=(x_0,y_0)$. 

Since $x_0$ is a jumping point of $\bn$ we have
\[
\bn(x_0^+)\neq \bn(x_0)\;\;\mbox{or}\;\;\bn(x_0)\neq \bn(x_0^-).
\]
We discuss only the case $\bn(x_0^-)\neq \bn(x_0)$ because   the other case reduces to this case applied to the region  obtained from $S$ via a reflection in the $y$-axis. For every $\ep>0$ fix an $\ep$-generic  point $x_0'(\ep)$  such that
\[
x_0'(\ep)=\begin{cases}
x_0, & \mbox{if $x_0\in\bR\setminus\ep\bZ$}\\
x_0-\frac{\ep}{2} & \mbox{if $x_0\in\ep\bZ$}.
\end{cases}
\]
 We distinguish several cases.

\medskip 

\noindent {\bf Case 1.}   $\bn(x_0^-) > \bn(x_0)$.   We can find $\delta>0$ sufficiently small   such that the interval  $(x_0 - \delta, x_0) $ will contain no  jumping points  of $S$.   The  set
\[
S_{[x_0-\delta,x_0]}:=\bigl\{ (x,y)\in S;\;\;x\in [x_0-\delta,x_0]\,\bigr\}
\]
is   a union of  elementary regions 
\[
S(\beta_j, \tau_j),\;\;j=0,\dots, m=\bn(x_0^-),
\]
``stacked one above the other'', i.e.,
\[
\beta_0(x)\leq \tau_0(x)< \beta_1(x) \leq \tau_1(x)< \cdots < \beta_{m}(x)\leq \tau_{m}(x),\;\;\forall x\in(x_0-\delta,x_0),
\]
 where $\beta_j,\tau_j$ are are continuous semialgebraic functions.  Since $\bn(x_0)<\bn(x_0^-)$ we deduce that there exist $j_0, j_1\in\{1,\dotsc, m\}$ such that $j_0\leq j_1$ and 
 \[
 \beta_{j_1}(x_0)=\tau_{j_0-1}(x_0)\;\;\mbox{and}\;\; \gamma_j:=\beta_{j}(x_0)-\tau_{j-1}(x_0)>0,\;\;\forall j\not\in [j_0, j_1].
 \]
 Thus, the elementary sets 
 \[
 S(\beta_{j_1},\tau_{j_1}),\dotsc, S(\beta_{j_0}, \tau_{j_0}), S(\beta_{j_0-1},\tau_{j_0-1} )
 \]
have a point in common, namely the critical point $p_0$; see Figure \ref{fig: jumpl}. In particular, for any $\ep>0$, the $\ep$-stacks over $x_0'(\ep)$ of these sets  also have a point in common.

 \begin{figure}[ht]
\centering{\includegraphics[height=2in,width=2.9in]{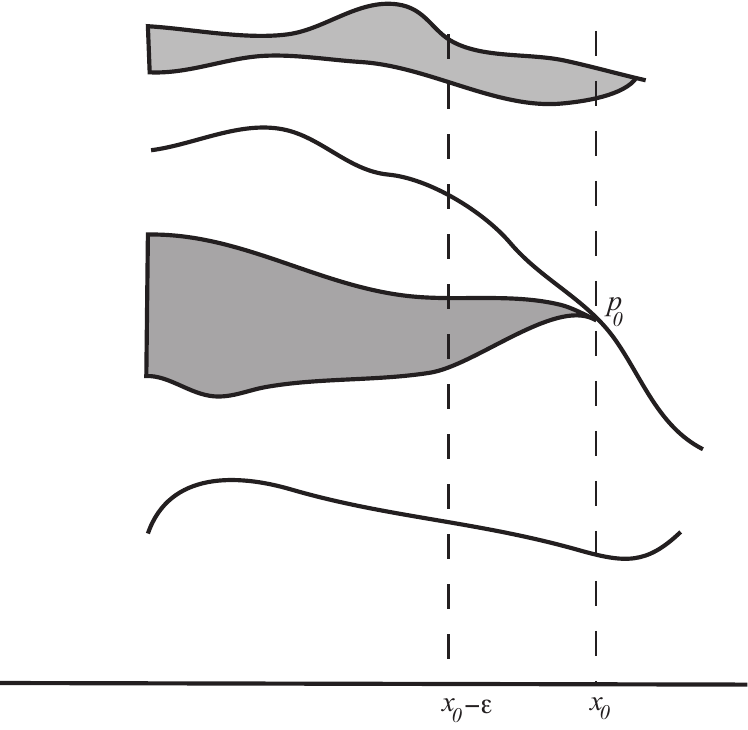}}
\caption{\sl  Near a jump point $x_0$.}
\label{fig: jumpl}
\end{figure}

 Now choose   $\ep_1$ sufficiently small so that  for $j\not\in[j_0,j_1]$  and $\ep<\ep_1$, the $\ep$-stack of $S(\beta_j,\tau_j)$ over $x_0'(\ep)$ is   disjoint  form the $\ep$-stack of $S(\beta_{j-1},\tau_{j-1})$ over $x_0'(\ep)$. Fix   $\ep<\min(\ep_0,\ep_1)$. The above discussion shows that
 \[
 \bn(x_0)=\bn_\ep(x_0'(\ep) ).
  \]
 If we set 
 \[
 x''_0(\ve): = \ve\left\lfloor\frac{x_0-\nu_0\ve^{\kappa_0}}{\ve}\right\rfloor-\frac{\ve}{2},
 \]
 then    Theorem  \ref{thm: noisebound} now implies that
 \[
\bn_\ep(\,x_0''(\ve)\,)= \bn(\,x_0''(\ve)\,)>\bn(\, x_0'(\ve)\,)=\bn_\ep(x_0'(\ep) ).
 \]
 This proves that  the interval $[x_0-\nu_0\ep^{\kappa_0}, x_0]$ contains a jumping point of $\bn_\ep$.

\medskip

 \begin{figure}[ht]
\centering{\includegraphics[height=2.2in,width=3in]{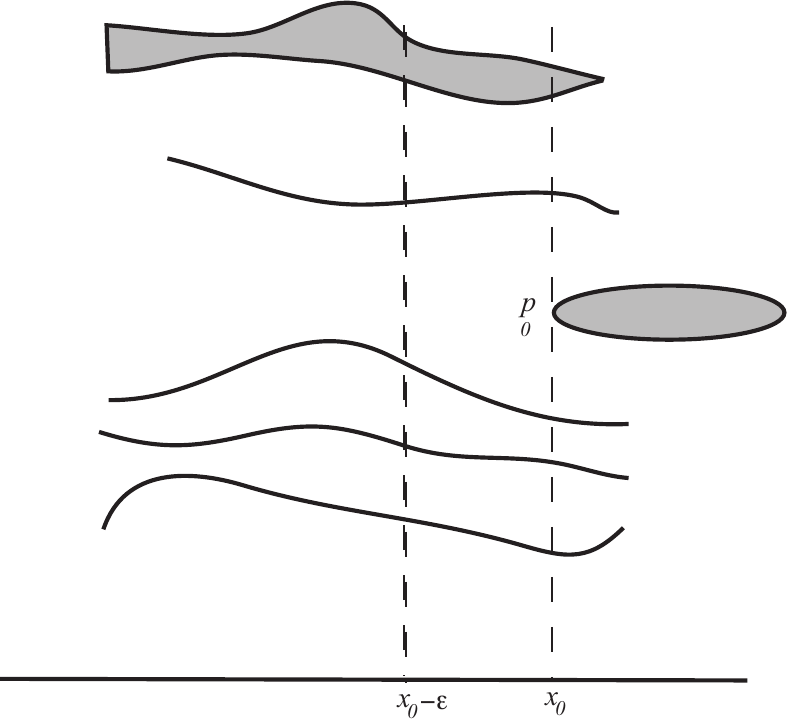}}
\caption{\sl  Near a jump point $x_0$.}
\label{fig: jumpr}
\end{figure}

\noindent {\bf Case 2.}  $\bn(x_0^-) < \bn(x_0)$.   This implies that there exist a critical point $p_0$ on the vertical line $\{x=x_0\}$ and   a tiny disk $D$ centered at $p_0$ such that  (see Figure \ref{fig: jumpr})  
\begin{equation}
D\cap \bigl\{ (x,y)\in S;\;\;x<x_0\,\bigr\}=\emptyset.
\label{eq: isolate}
\end{equation}
To see why this is the case note that the condition  $\bn(x_0^-) < \bn(x_0)$ implies that a component $K$ of $S\cap\{x=x_0\}$ is disjoint from  the closure of  $S\cap \{x_0-\ve\leq  x < x_0\}$ for $\ve$ sufficiently small.   The  component $K$ is either a point, or a nontrivial compact interval.   The  second possibility is prohibited  by the genericity of $S$ because any point of $K$ is a stratified critical point of the projection of $S$ onto the $x$-axis. This $K$ consists of a single point $p_0$ which is critical and, by construction,  it satisfies (\ref{eq: isolate}).

In  particular, this shows that $p_0$ is an isolated point of the set
\[
S_{x\leq x_0}=\bigl\{(x,y)\in S;\;\;x\leq x_0\,\bigr\}.
\]
  If $\bn(x_0^-)=0$, the conclusion is obvious.  We assume that $\bn(x_0^-)>0$.     Choose $\delta>0$ such that the interval $[x_0-\delta,x_0)$ contains no jumping point of $S$. Set
\[
R:= \cl\bigl(\,S_{[x_0-\delta,x_0]}\setminus \{p_0\}\,\bigr).
\]
Then $R$ is a  union of   simple regions
\[
S(\beta_j, \tau_j),\;\;j=0,1\dotsc, m=\bn(x_0^-)-1,
\]
where $\beta_j$ and $\tau_j$ are continuous semialgebraic functions functions such that
\[
\beta_0(x)\leq \tau_0(x) <\beta_1(x)\leq \tau_1(x)<\cdots <\beta_m(x)\leq \tau_m(x),\;\;\forall x\in [x_0-\delta,x_0].
\]
We can find  $\ep_1=\ep_1(S)$ such that for any $\ep<\ep_1$ and any  $\ep$-generic $x\in [x_0-\delta,x_0]$ we have:
\begin{itemize}
\item $\bn_{R,\ep}(x)=\bn_S(x)=m+1=\bn_S(x_0^-)$, and

\item  the $\ep$-column of $S_{[x_0-\delta,x_0]}$  over  $x_0$  consists of $\bn(x_0)=m+2$ stacks. 

\end{itemize}

Theorem \ref{thm: noisebound} implies that
\[
\bn_{S,\ep}(x)=\bn_S(x)=\bn_S(x_0^-)=m+1\;\; \forall x\in [x_0-\delta,x_0-\nu_0\ep^{\kappa_0}]\setminus \bZ\ep.
\]
On the other hand, $\bn_{S,\ep}(x_0^-)=m+2$.    Thus the interval $[x_0-\nu_0\ep^{\kappa_0},x_0]$ must contain a jumping point of $\bn_{S,\ep}$.

\end{proof}

\begin{remark}    Theorem  \ref{thm: noisebound}   states that the two functions  $\bn$ and $\bn_\ep$ coincide  at points situated at a distance at least $\nu_0(S)\ep^{\kappa_0 - 1}$ pixels away from  the jumping points of $\bn$. On the other hand, Theorem \ref{th: critgen} shows that, for a generic semialgebraic set, then  within $\nu_0(S)\ep^{\kappa_0 - 1}$ pixels  from a jumping point of $\bn$ there must be jumping points of $\bn_\ep$.\qed
\label{rem: sepres}
\end{remark}

\begin{definition} Let $S$ be a generic semialgebraic set in $\bR^2$ and the constants $\ep_0(S)$ and $\ep_1(S)$ as defined in Theorems \ref{thm: noisebound} and  \ref{th: critgen}. We set
\[
\hbar(S):=\min\bigl(\, \ep_0(S),\;\ep_1(S)\,\bigr),
\]
and  we will refer to it as the \emph{critical resolution} of $S$.

\qed
\label{def: constants}
\end{definition}

\section{Approximation of generic semi-algebraic sets}
\label{s: 4}
\setcounter{equation}{0}

This section is the heart of the paper.  Here we will describe an algorithm which will approximate a generic semi-algebraic set using only its pixelations, and then prove a very strong convergence result for this approximation.  This algorithm is based on the central algorithm of \cite{Row}, updated to handle the additional  complexities of semi-algebraic sets.

We first observe that when   narrow vertical strips around the  jumping set $\eJ_S$ are removed from a semi-algebraic set $S$,  the remainder is a disjoint union of elementary sets.  Corollary \ref{cor: piece curv} indicates a good way to approximate continuous semi-algebraic functions, and Theorems \ref{thm: noisebound} and \ref{th: critgen} indicate that for small $\ep$, the jumping points of $S$ become close to the jumping points of $P_\ep(S)$.  Therefore a viable approximation technique is to treat parts of $P_\ep(S)$ which occur near jumping points as noise (to be approximated crudely) and to approximate outside of this noise by means of Corollary \ref{cor: piece curv}.

There are two quantities which must be used  in this approximation.  The first is the previously mentioned spread function $\sigma$ which determines the width of line segments to be used in approximating outside of noise.  From Corollary \ref{cor: piece curv} we know that this spread function should satisfy the following limits:
\[
\lim_{\ep \searrow 0} \ep \sigma(\ep) = 0, \;\; \lim_{\ep \searrow 0} \ep (\sigma(\ep))^2 = \infty.
\]
The second quantity determines the width of the noise, measured in pixels, about jumping points.  We will call this quantity $\nu$ and refer to it as the \emph{noise width}.  It is defined as follows:
\begin{definition}
Let $S$ be a semi-algebraic set and $\kappa_0$ be its separation exponent.  Then a \emph{noise width} $\nu$ is a function $\nu: \bR^+ \to \bZ^+$ which satisfies the following equations:
\begin{subequations}
\begin{equation}
\lim_{\ep \searrow 0} \ep \nu(\ep) = 0,
\label{eq: smallnoise}
\end{equation}
\begin{equation}
\lim_{\ep \searrow 0} \frac{\ep \nu(\ep)} { \ep^{\kappa_0}} = \infty,
\label{eq: suff-noise}
\end{equation}
\end{subequations}
\end{definition}
The first property in this definition ensures that noise is a highly localized phenomenon.  The second property implies that $\ep \nu(\ep)$ increases faster than $\ep^{\kappa_0}$ so that the noise will eventually contain all fake cycles (consult Remark \ref{rem: sepres}). For a reasonable approximation we must have an a priori  estimate of $\kappa_0$.  The choice  $\kappa_0 = \frac{1}{2}$ works for many $S$. We could then set $\nu(\ep) = \lceil \ep^{\frac{-2}{3}}\rceil$.

\begin{definition}
Let $S$ be a semialgebraic set and $P_\ep(S)$ its $\ep$-pixelation.  If $A \subset \bR$ then the \emph{part of $P_\ep$ over $A$} is the set
\[
P_\ep(S) \cap (A \times \bR)
\]
\label{def: over}\qed
\end{definition}

\begin{algo}

\begin{enumerate}
\item Choose a spread $\sigma$ such that $\ep \sigma(\ep)^2 \to \infty$ and $\ep \sigma(\ep) \to 0$ as $\ep \to 0$.

\item Choose a separation exponent $\kappa_0>0$  and noise width $\nu=\nu(\ve)$ such that $\frac{\ep \nu(\ep)}{\ep^{\kappa_0}} \to \infty$ and $\ep \nu(\ep) \to 0$ as $\ep \to 0$.

\item  For each point $p \in \eJ_{S,\ep}$ set
\[
\zeta_\ep^-(p) :=-\ve\nu(\ve)+\ve\left\lfloor\frac{p}{\ve}\right\rfloor-\frac{\ve}{2},\;\;\zeta_\ep^+(p):= \ve\nu(\ve)+\ve\left\lceil\frac{p}{\ve}\right\rceil+\frac{\ve}{2},
\]
\[
\Delta_\ep(p):=[\zeta^-_\ep(p), \zeta^+_\ep(p)].
\]
The set 
\[
\Delta_\ep := \bigcup_{p \in \bsJ_\ep} \Delta_\ep(p).
\]
is called the \emph{noise} set of $P_\ep(S)$ and its connected components are called the \emph{noise intervals} of $P_\ep(S)$.

\item Define $\eR_\ep$ to be the closure of  $\bR\setminus \Delta_\ep$. We call $\eR_\ep$ the \emph{regular} set of $P_\ep(S)$, and its connected components are called \emph{regular intervals} of $P_\ep(S)$.

\item For each bounded regular interval $I \subset \eR_\ep$   and each connected component $\eC$  of $P_\ep(S) \cap (I\times \bR)$, the part of $P_\ep(S)$ over  the regular regular intervals $I$, do the following:

\begin{enumerate} 

\item Choose compatible upper and lower samples $\Xi_\ep^+$ and $\Xi_\ep^-$ with spread $\sigma$.

\item Generate the PL-approximation determined by the above upper and lower samples. 

\item The union of all PL-approximations found in the above step is called the \emph{regular approximation} which we denote by $S_\ep^{\; reg}$.

\end{enumerate}

\item For each noise interval $I \subset \Delta_\ep$ denote by $\eC_\ep(I)$ the set of connected components of $ P_\ep(S)$ over $I$. 

\begin{enumerate}
 
\item  For every  $C\in \eC_\ep(I)$ we denote by $U_C$ (resp. $L_C$)  the  highest (resp. lowest)   $y$-coordinate of  the center of a pixel  in $C$.

  \item Denote by  $\eP_\ep(C)$ the  rectangle $I\times [L_C,U_C]$; see Figure \ref{fig: noise blocks}.
 
 \begin{figure}[ht]
\centering{\includegraphics[height=1.5in,width=1.5in]{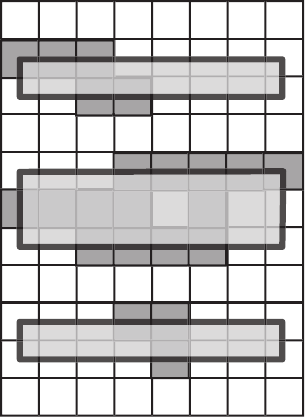}}
\caption{\sl  Covering up the noise.}
\label{fig: noise blocks}
\end{figure}

\item The \emph{noise approximation over I}, which we denote by $P_\ep(I)$, is the union
\[
P_\ep(I):=\bigcup_{C \in C_\ep(I)} \eP_\ep(C)
\]

 \end{enumerate}
 \item The union of all $P_\ep(I)$, where $I$ are the noise intervals in $\Delta_\ep$, is called the \emph{noise approximation} which we denote by $S_\ep^{\; noise}$.
\item The final approximation $S_\ep$ is simply the union of the noise and regular approximations, i.e.
\[
S_\ep =  S^{\; noise}_\ep \cup S_\ep^{\; reg}
\]
\end{enumerate}
This final set $S_\ep$ will be piecewise linear by construction.\qed
\label{alg: process}
\end{algo}

\begin{ex} If we apply the above algorithm applied to  the $\ve$-pixelation of the unit circle $\{x^2+y^2=1\}$, where $\ve\approx \frac{1}{16}$,   we obtain  the region depicted  in Figure \ref{fig: circle}. The noise blocks are the two rectangles that cover the noise region.
\begin{figure}[ht]
\centering{\includegraphics[height=1.5in,width=1.5in]{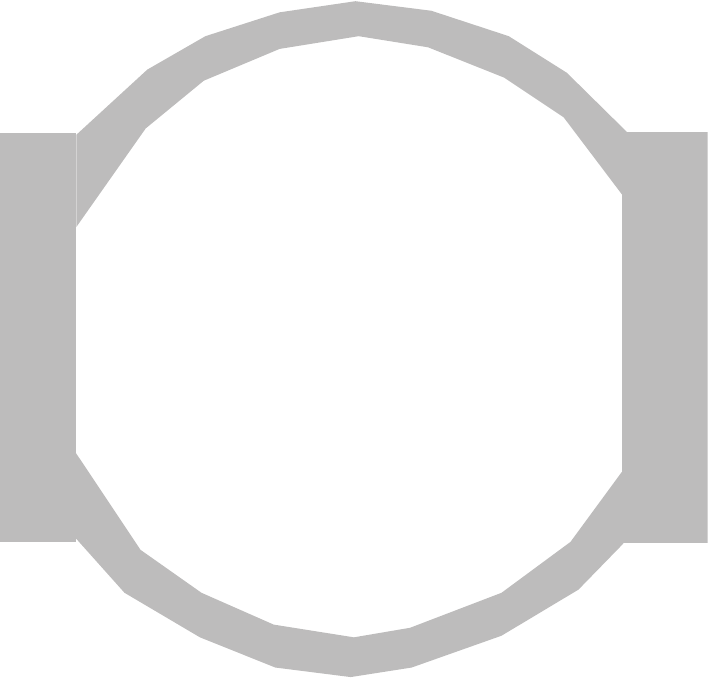}}
\caption{\sl  Recovering a circle from a rough pixelation.}
\label{fig: circle}
\end{figure}

We  can ``beautify'' a bit the final product by running  the algorithm    on  the pixelation obtained by a  ninety degree rotation, i.e., by reversing the roles of $x$ and $y$-axes.  We obtain two $PL$ approximations.   The intersection of the two is another $PL$ approximation with smaller  noise blocks.   The  final  product is a  region  which  closely resemble  an annular region of width $\approx \ve$. The unit circle is the ``median'' circle of this annular region.\qed
\end{ex}

The approximation $S_\ve$ produced by the above algorithm is ``good",   meaning  that it captures both topological and geometric information such as area, perimeter, and curvature measures.  The precise notion  of ``good approximation'' relies on the concept of  \emph{normal cycle}.

The normal cycle is a correspondence that associates to each compact planar semialgebraic  set $X$ a $1$-dimensional current   $\bsN^X$        on the unit sphere  tangent bundle of $\bR^2$ \label{p: normal},
\[
\bsS(T\bR^2)= \bigl\{ (\bv, \bp)\in \bR^2\times \bR^2;\;\; |\bv|=1\,\bigl\}.
\]
For a precise definition of this object we refer to \cite{Ber, Fu2, Mor, LC}.        Here we  will content   ourselves with a  brief description of its construction.

For a  semialgebraic compact domain $D\subset \bR^2$ with $C^2$-boundary the normal cycle $\bsN^D$ has a simple description. It is the current of integration   given by the   closed curve  $\eG_D\subset \bsS(T\bR^2)$ 
\[
\eG_ D = \bigl\{ (\bn(\bp), \bp)\in \bsS(T\bR^2);\;\;\bp\in \pa D\,\bigl\}
\]
where  $\bn(\bp)$ denotes the unit  outer normal to $\pa D$ at $\bp\in \pa D$.    Equivalently, $\eG_D$ is the graph the Gauss map
\[
\pa D\ni \bp \mapsto \bn(\bp)\in S^1.
\]
Clearly, in this case,  the normal cycle contains all the curvature information   concerning the boundary of $D$.

More generally, if $S$ is a compact semialgebraic set, the we can find a $C^3$, proper semialgebraic function $f: \bR^2\ra [0,\infty)$ such that $S= f^{-1}(0)$.   For all $\ep>0$ sufficiently small the region $S_\ep:=\{f\leq \ep\}$ is a compact semialgebraic domain with $C^2$-boundary  so we can define the normal cycle $\bsN^{S_\ep}$ as above. One can show that as $\ep \ra 0$  the currents   $\bsN^{S_\ep}$ converge  weakly to a current  which by definition  is the normal cycle of $S$. The hard part is to prove that this current is independent of the choice of defining function $f$.    This current is   a current of integration along a finite number of oriented semialgebraic arcs  in $\bsS(T\bR^2)$.  We refer to \cite{Mor} to a more in depth description  of  the normal cycle of planar semialgebraic sets.  In particular, in \cite{Mor}  one can  see how this current  captures   the various curvature properties of $S$.

The following is the main result of this paper.

\begin{theorem}
Let $S$ be a generic compact semi-algebraic subset of the plane.  For each $\ep$, let $S_\ep$ be the PL set constructed using the Algorithm \ref{alg: process}. Denote by $\bsN^S$ (resp. $\bsN^{S_\ve}$) the normal cycle of $S$ (resp. $S_\ve$). 
Then $\bsN^{S_\ve}$ converges to $\bsN^{S}$ weakly and in the flat topology as $\ve\to 0$.
\label{th: main}
\end{theorem}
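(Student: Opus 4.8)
The plan is to invoke J.~Fu's approximation theorem, cited in the Introduction as \cite{Fu1}, which reduces the weak and flat convergence of normal cycles to two verifications: (i) a uniform bound on the perimeter and total curvature of $S_\ve$ as $\ve \searrow 0$, and (ii) for almost every closed half-plane $H$, the convergence $\chi(H \cap S_\ve) \to \chi(H \cap S)$ of Euler characteristics. So the body of the proof splits into these two parts, with part (ii) being the substantial one.

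For part (i), I would decompose $\pa S_\ve$ according to the output of Algorithm~\ref{alg: process}: the boundary contributions coming from the regular approximation $S_\ve^{\,reg}$ over bounded regular intervals, and those coming from the noise approximation $S_\ve^{\,noise}$ over noise intervals. Over each regular interval, each connected component of $P_\ve(S) \cap (I \times \bR)$ is the pixelation of an elementary set (this is exactly the content of Theorems~\ref{thm: noisebound} and \ref{th: critgen}: removing strips of half-width $\ve\nu(\ve) \gg \ve^{\kappa_0}$ around $\eJ_{S,\ve}$ separates $P_\ve(S)$ into pixelations of elementary sets, matching the decomposition of $S$ into elementary pieces $S(\beta_{i,j},\tau_{i,j})$). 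Then Corollary~\ref{cor: piece curv} gives that the length and total curvature of the boundary of each such PL-approximation converge to those of the corresponding boundary piece of $S$; summing over the finitely many pieces yields convergence, hence boundedness, of the total length and curvature of $\pa S_\ve^{\,reg}$. For the noise blocks, each is a rectangle $I \times [L_C, U_C]$ with $|I| = O(\ve\nu(\ve)) \to 0$; each rectangle contributes total curvature $2\pi$ and perimeter $O(\ve\nu(\ve)) + 2(U_C - L_C)$, and the number of noise blocks and the heights $U_C - L_C$ are controlled by $S$ for small $\ve$ (the noise intervals shrink to the finitely many points of $\eJ_S$, and over a small neighbourhood of each jumping point the set $S$, hence its pixelation, has boundedly many and boundedly tall stacks). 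This gives uniform bounds, and in fact the noise contribution to length tends to a finite limit while its curvature contribution stays bounded.

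Part (ii) is where the work lies. Fix a closed half-plane $H$. If $H$ is bounded by a non-vertical line, one should be able to rotate coordinates so that $H = \{x \le c\}$ or $\{x \ge c\}$ (the half-planes with vertical boundary form a measure-zero set in the space of half-planes, and moreover the algorithm can be run with respect to either axis so there is no real loss); then $\chi(H \cap S)$ is read off from the stack-counting / Morse-theoretic data of $\bh|_S$, and $\chi(H \cap S_\ve)$ from that of $\bh|_{S_\ve}$. The strategy is: for $c$ not in the (finite) jumping set $\eJ_S$ and $\ve$ small, the slice $\{x = c\}$ lies in a regular interval, $\bn_{S,\ve}(c) = \bn_S(c)$ by Theorem~\ref{thm: noisebound}, and more generally the whole ``sublevel'' part $P_\ve(S) \cap \{x \le c\}$ is, away from noise blocks, a disjoint union of pixelated elementary sets each of which is contractible, glued along vertical slices; an inclusion-exclusion / Mayer--Vietoris count shows $\chi(P_\ve(S) \cap \{x \le c\}) = \chi(S \cap \{x \le c\})$ once $\ve$ is below the critical resolution $\hbar(S)$ and below a threshold depending on $c$. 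The point of Theorems~\ref{thm: noisebound} and \ref{th: critgen} together is precisely that the jumping points of $\bn_{S,\ve}$ cluster within $\nu_0\ve^{\kappa_0}$ of those of $\bn_S$ and conversely, so the combinatorial structure (which stacks merge/split, and where) of $\bh|_{S_\ve}$ matches that of $\bh|_S$; one then checks that replacing each pixelated elementary piece by its PL-approximation from Corollary~\ref{cor: piece curv}, and each noise component by its bounding rectangle, does not change the Euler characteristic of the half-plane slice, because a rectangle and a narrow elementary piece are both contractible and the incidence pattern of components along the boundary slices of noise intervals is preserved. Passing from slices $\{x \le c\}$ to a single value and then letting $\ve \to 0$ gives $\chi(H \cap S_\ve) = \chi(H \cap S)$ for all small $\ve$, which is stronger than the required limit.

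\textbf{Main obstacle.} The hard part is the book-keeping in part (ii): one must show that the Euler-characteristic count is insensitive both to the coarse rectangular approximation inside noise intervals and to the passage from pixelated elementary sets to their PL-interpolations, \emph{uniformly} as the half-plane boundary $c$ varies over the continuum of values (with the finitely many exceptional values in $\eJ_S$, where a separate argument near the critical point $p_0$ and its Goresky--MacPherson Morse data is needed, and a genericity argument discards the $c$ for which the half-plane boundary passes through a vertex of $S_\ve$). Making the ``incidence pattern is preserved'' claim precise requires a careful local analysis near each jumping point, essentially a discretized version of local stratified Morse theory, controlling how the stacks of $P_\ve(S)$ enter and leave the noise block and how the bounding rectangle connects them --- this is the discretization of stratified Morse theory advertised in the abstract, and it is the technical core of the paper. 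Once the Euler-characteristic identity and the uniform geometric bounds are in hand, Fu's theorem delivers the weak and flat convergence $\bsN^{S_\ve} \to \bsN^S$ immediately.
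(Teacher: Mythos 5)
You have correctly identified the engine of the proof (Fu's approximation theorem, with its two hypotheses of uniform mass bounds and Euler-characteristic convergence on half-plane slices), and your treatment of the mass bound via Corollary \ref{cor: piece curv} for the regular pieces and the shrinking rectangles for the noise pieces matches the paper. Structurally, though, the paper does not apply Fu's theorem to $S_\ve$ globally: it first uses the inclusion--exclusion property of normal cycles over the Reeb graph of $\bh|_{S_\ve}$ (whose vertices are the noise components and whose edges are the regular components) to write $\bsN^{S_\ve}$ as a signed sum of normal cycles of rectangles, elementary regions, and vertical segments, and only then applies Fu's theorem to each piece separately. This localization is what makes the Euler-characteristic verification tractable, since each piece is either convex (handled by Lemma \ref{lem: quad}) or an elementary region bounded by two PL graphs.

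The genuine gap is in your part (ii), at the sentence where you propose to rotate coordinates so that a half-plane with non-vertical boundary becomes $\{x\le c\}$. Fu's theorem requires the Euler-characteristic convergence for almost every linear functional $\xi$, i.e.\ for cutting lines of almost every slope, and the set $S_\ve$ is a fixed PL set built from a grid aligned with the coordinate axes: after a rotation it is no longer a union of stacks, its pieces are no longer graphs over the new horizontal axis, and the entire stack-counting apparatus (Theorems \ref{thm: noisebound}, \ref{th: critgen}, the columns, the noise intervals) is unavailable in the rotated frame. One genuinely must analyze how a line $L_{\xi,c}$ of \emph{arbitrary} non-vertical slope meets the PL graphs $\beta_{\bse,\ve}$, $\tau_{\bse,\ve}$ bounding each regular component. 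This is the technical core of the paper: Lemma \ref{lemma: cross} shows that near each transversal crossing of $L_{\xi,c}$ with the limit graph $\beta_{\bse,0}$ the PL graph crosses $L_{\xi,c}$ exactly once and with the same sign (the uniqueness is proved by an intersection-number argument, and it uses the spread condition to force the slopes of the interpolating segments to converge to the slope of the limit curve, hence to stay on one side of the slope of $L_{\xi,c}$); this is then fed into a three-case analysis according to whether the elementary region is nondegenerate, degenerate, or mixed, to match up the boundary components of $C_{\bse,\ve}\cap\{\xi\ge c\}$ with those of $C_{\bse,0}\cap\{\xi\ge c\}$. Your proposal correctly flags that the incidence bookkeeping is the hard part, but the specific reduction you offer to make it tractable does not work, and without it the claim $\chi(H\cap S_\ve)=\chi(H\cap S)$ for generic $H$ is unproved.
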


\begin{proof} First we note that the approximation converges in the Hausdorff metric to the original set.  This is because each vertex of a line segment is taken from a pixel which contains a piece of the boundary of the original set.  Since every pixel of the pixelation can be at most $\ep \sqrt{2}$ far from the original set, this forces the approximation into a tube around the original set which becomes arbitrarily small as $\ep$ goes to $0$.

From here the strategy of the proof will make heavy use of the inclusion-exclusion principle  satisfied  by the normal cycle correspondence $X\mapsto \bsN^X$. More precisely, this means that for any compact semi-algebraic sets $X$ and $Y$, we have
\[
\bsN^{X\cup Y} =\bsN^{X}+\bsN^{Y}-\bsN^{X\cap Y}.
\]
We will use this principle to reduce the calculation of the normal cycle of $S_\ep$ to calculations of the normal cycle of  simpler subsets of $S_\ep$.


First, we need to introduce   some more notation.  We set $S_0 = S$.  For each $\ep > 0$ and each $c \in \eJ_{S,\ep}$ we indicate by $\eI_\ep(c)$ the $\ep$-noise interval containing $c$.   For each $c \in \eJ_S$ we set $\eI_0(c) := \{c\}$.  We then define, for each $\ep \geq 0$, the noise strip $\eN_\ep(c)$ as
\[
\eN_\ep(c) := \{(x,y); x \in \eI_\ep(c)\}
\]
and set
\[
\eN_\ep := \bigcup_{c \in \eJ_{S,\ep}} \eN_\ep(c),\;\;\hat{\eR}_\ep:=\bR^2 \setminus \eN_\ep.
\]
For each $\ep \geq 0$ we construct a graph $\Gamma_\ep$ as follows.  The vertex set is the set of connected components of $\eN_\ep \cap S_\ep$.  The edge set is the set of connected components of $\hat{\eR}_\ep \cap S_\ep$, so that two vertices $v_1,v_2$ are connected by an edge  if and only if there is a component of $\hat{\eR}_\ep \cap S_\ep$  whose closure  intersects  the two  components  of $\eN_\ep \cap S_\ep$ defining $v_1,v_2$.  This graph is the Reeb graph   of the  projection of $S_\ep$ onto the $x$-axis. (We refer to \cite[VI.3]{EH} for a definition of the Reeb graph.)

 Observe that  there is a $\delta > 0$ such that for all $\ep \in (0,\delta]$ the graph $\Gamma_\ep$ is isomorphic to the graph $\Gamma_0$.  Let 
\[
\ep_2 := \min \{\hbar(S), \delta\},
\]
 where $\hbar(S)$ is the critical resolution defined in Definition \ref{def: constants}.  For the remainder of the proof we will deal only with $\ep \in [0,\ep_2]$.


Let $\eV_\ep$ be the set of vertices of $\Gamma_\ep$ and $\eE_{\ep}$ be the set of edges of $\Gamma_\ep$.  Note that, by the above equivalence of Reeb graphs, for $\ep \in [0,\ep_2]$, there is a natural bijection between the vertices of $\Gamma_\ep$ with those of $\Gamma_0$ and similarly for the edges.    Given a vertex $\bv$ of $\Gamma_0$  we set
\[
 E_\bv:=\mbox{ the set of edges of $\Gamma_0$ incident to $\bv$.}
 \] 
For any vertex $\bv$ of $\eV_0$ and $\ep \in [0,\ep_2]$ we indicate by $C_{\bv,\ep}$ the connected component of $\eN_\ep \cap S_\ep$ corresponding to the vertex.  Similarly, for any edge $\be$ of $\Gamma_0$ we indicate by $C_{\be,\ep}$ the \emph{closure} of the connected component of $\hat{\eR}_\ep \cap S_\ep$ corresponding to $\be$.   We have  the following result, (compare \cite[Lemma 5.3]{Row}).

\begin{lemma}  For any $\ve\in [0,\ep_2]$ we  have
\begin{equation}
\bsN^{S_{\be}} = \sum_{\bv \in \eV_0} \bsN^{C_{\bv,\ep}} + \sum_{\be \in \eE_0} \bsN^{C_{\be,\ep}} - \sum_{\bv \in \eV_0} \sum_{\be \in \eE_0}\bsN^{C_{\bv,\ep} \cap C_{\be,\ep}}.
\label{eq: key-normal}
\end{equation}
\end{lemma}

\begin{proof} Note that we have  a  decomposition
\begin{equation}
S_\ep=\left(\bigcup_{\bv\in\eV_0} C_{\bv, \ep}\right)\cup \left(\bigcup_{\bse\in \eE_0} C_{\bse,\ep}\right).
\label{eq:  decomp}
\end{equation}
We need to discuss separately the cases $\ve>0$ and $\ve=0$.

\smallskip

\noindent {\bf 1.} Assume that $\ep\in (0,\ep_2]$. In this case  we have

\begin{equation}
C_{\bv,\ep}\cap C_{\bv',\ep}=\emptyset=C_{\bse,\ep}\cap C_{\bse',\ep},\;\;\forall \bv\neq \bv',\;\;\bse\neq\bse'.
\label{eq: disj1}
\end{equation}
The equality (\ref{eq: key-normal})  now follows from the inclusion-exclusion principle  applied to the decomposition (\ref{eq: decomp})   satisfying the overlap conditions (\ref{eq: disj1}).

\smallskip

\noindent {\bf 2.}  Assume that $\ep=0$. In this case the overlap conditions are more complicated.  We have
\begin{subequations}
\begin{equation}
C_{\bv,0}\cap C_{\bv',0}=\emptyset,\;\;\forall \bv\neq\bv',
\label{eq: disj2a}
\end{equation}
\begin{equation}
C_{\bse,0}\cap C_{\bse',0}=\emptyset\Llra \bse\cap\bse'=\emptyset,
\label{eq: disj2b}
\end{equation}
\end{subequations}
where the condition $\bse\cap\bse'=\emptyset$ signifies that the edges $\bse$ and $\bse'$ have no vertex in common. Recall that $E_\bv$ denotes the set of edges of $\Gamma_0$ incident to the vertex $\bv$.   We have  
\begin{equation}
\bigcap_{\bse\in A} C_{\bse, 0} = C_{\bv,0},\;\;\forall \bv\in\eV_0,\;\; \emptyset\neq A\subset E_{\bv}.
\label{eq: disj3}
\end{equation}
Using (\ref{eq: decomp}), (\ref{eq: disj2a}),  (\ref{eq: disj2b}),  (\ref{eq: disj3}) and the inclusion-exclusion principle we deduce
\[
\begin{split}
\bsN^S = &\sum_{\bv\in \eV_0} \bsN^{C_{\bv,0}}+\sum_{\bse\in \eE_0} \bsN^{C_{\bse,0}}-\sum_{\bv \in \eV_0}\sum_{\bse\in E_\bv} \bsN^{C_{\bv,0}\cap C_{\bse,0}}\\
&+ \sum_{\bv\in \eV_0} \sum_{ \emptyset\neq A\subset E_{\bv}} (-1)^{|A|+1} \bsN^{ C_{\bv}\cap (\bigcap_{\bse\in A}) C_{\bse,0} } + \sum_{\bv\in \eV_0} \sum_{\emptyset\neq  A\subset E_{\bv}} (-1)^{|A|} \bsN^{ \bigcap_{\bse\in A} C_{\bse,0} }
\end{split}
\]
\[
\begin{split}
&=\sum_{\bv\in \eV_0} \bsN^{C_{\bv,0}}+\sum_{\bse\in \eE_0} \bsN^{C_{\bse,0}}-\sum_{\bv \in \eV_0}\sum_{\bse\in E_\bv} \bsN^{C_{\bv,0}\cap C_{\bse,0}}\\
&+\sum_{\bv\in \eV_0}\, \underbrace{\left( \sum_{\emptyset\neq  A\subset E_\bv} \bigl(\,(-1)^{|A|+1}+  (-1)^{|A|}\,\bigr)\,\right)}_{=0}\,\bsN^{C_{\bv,0}}
\end{split}
\]
\end{proof}

The above lemma shows  that Theorem \ref{th: main} will follow  once we prove that the  three equalities below are satisfied for every edge $\be$ and vertex $\bv$ in $\Gamma_0$.

\begin{subequations}
\begin{equation}
\lim_{\ep \searrow 0} \bsN^{C_{\bv,\ep}} = N^{C_{\bv,0}}
\label{eq: limnoise}
\end{equation}
\begin{equation}
\lim_{\ep \searrow 0} \bsN^{C_{\bv,\ep} \cap C_{\be, \ep}} = \bsN^{C_{\bv,0} \cap C_{\be,0}}
\label{eq: limint}
\end{equation}
\begin{equation}
\lim_{\ep \searrow 0} \bsN^{C_{\be,\ep}} = \bsN^{C_{\be,0}},
\label{eq: limreg}
\end{equation}
\end{subequations}
where the convergence in each limit is meant in the weak sense of currents.

Each of these equations will rely on an approximation result of normal cycles proved by Joseph Fu in \cite{Fu1}.  A restricted version of this theorem, which shall suffice for the purposes of this paper, is stated below.

\begin{theorem}[Approximation Theorem] Suppose $S$ is a  compact semialgebraic  subset of the plane and for each $\ep>0$ we are given a  compact semialgebraic  subset  $S_\ep$  of the plane with the following properties.

\begin{enumerate}
\item There is a compact set $K \subset \bR^2$ which contains each $S_\ep$.
\item There is a $M \in \bR$  such that
\[
{\rm mass}\bigl(\,\bsN^{S_\ep}\,\bigr)\leq M,\;\;\forall \ep.
\]
\item For almost every $\xi \in \Hom(\bR^2,\bR)$ and almost every $c \in \bR$ we have
\[
\lim_{\ep \searrow 0} \chi(S_\ep \cap \{\xi \geq c\}) = \chi(S \cap \{\xi \geq c\})
\]
\end{enumerate}
Then $\bsN^{S_\ep}$  converges to $\bsN^S$ as $\ep\ra 0$ weakly and in the flat metric.\qed
\label{thm: fu}
\end{theorem}

Equations (\ref{eq: limnoise}) and (\ref{eq: limint}) will both follow from applying this approximation theorem to the case of rectangles.  Therefore, the following lemma will be useful.

\begin{lemma}
Suppose $(S_\ep)_{\ep>0}$ is a family of compact convex polygons in the plane that converge in the Hausdorff  metric to a compact convex polygon $S$ as $\ve\searrow 0$. Then  $\bsN^{S_\ep}$ converges weakly to $\bsN^S$ as $\ep\searrow 0$.
\label{lem: quad}
\end{lemma}

\begin{proof}
We argue by proving the conditions of Fu's Theorem.   Observe first that there exists $R>0$ such that
\[
\dist(S_\ep, S)<R,\;\;\forall \ep
\]
 and thus  the condition (1) of the Approximation Theorem.   The  computations of \cite[Chap. 23]{Mor}  show that the mass of the normal cycle of a convex polygon $P$ is  equal to $2\pi +{\rm length}\,(P)$.    From  Hadwiger's characterization theorem \cite[Thm. 9.1.1]{KR}  we deduce that
 \[
 \lim_{\ep\ra 0} {\rm length}\,(S_\ep)={\rm length}\,(S)
 \]
 and  thus condition (2) is also satisfied.

Therefore we must show that for almost every $\xi \in \Hom(\bR^2,\bR)$ and almost every $c \in \bR$ we have
\[
\lim_{\ep \searrow 0} \chi(S_\ep \cap \{\xi \le c\}) = \chi(S \cap \{\xi \le c \})
\]
Note that $S$ and each $S_\ep$ are all convex subsets of the plane.  Therefore any intersection with a half-plane is either empty, or  a contractible set.  Therefore to prove the convergence of Euler characteristic on half-planes we need only prove that a half plane  $H$ will only intersect $S_\ep$ for small $\ep$ if and only if it intersects $S$.    This is true since $H\cap S_\ep$ converges in the Hausdorff metric to $H\cap S$.
\end{proof}

\noindent {\bf Proof of (\ref{eq: limnoise})} Fix a vertex $\bv \in \eV_0$.  The set $C_{\bv,0}$ is a subset of a vertical line over a jumping point, and  so it is either a point or a line segment.  For every $\ep \in [0,\ep_2]$, the set $C_{\bv,\ep}$ is a rectangle which spans a noise interval $I_\ep$ and contains $C_{\bv,0}$.  The width of $I_\ep$ (and so the width of $C_{\bv,\ep}$) is proportional to $\ep \nu(\ep)$, and so vanishes as $\ep \to 0$ (by choice of $\nu$).

The rectangle $C_{\bv,\ep}$ is constructed by choosing the highest and lowest pixels from the component of $I_\ep\cap P_\ep(S)$ containing $C_{\bv,0}$.  For sufficiently small $\ep$, the noise interval $I_\ep$ will be thin enough so that $C_{\bv,\ep} \cap S$ can be described as a number of regions lying between the graphs of functions which are $C^2$ everywhere except possibly at the jumping point.  This implies that for small $\ep$, the height of $C_{\bv,\ep}$ differs from the height of $C_{\bv,0}$ be an arbitrarily small amount.

Since the height and width of $C_{\bv,\ep}$ converge to the height and width of $C_{\bv,0}$ and since each $C_{\bv,\ep}$ contains $C_{\bv,0}$, the rectangles $C_{\bv,\ep}$ converge in the Hausdorff metric to $C_{\bv,0}$. Therefore by Lemma \ref{lem: quad} $\lim_{\ep \searrow 0} \bsN^{C_{\bv,\ep}} = \bsN^{C_{\bv,0}}$.

\smallskip

\noindent {\bf Proof of (\ref{eq: limint})} Fix an edge $\be \in \eE_0$ and a vertex $\bv \in \eV_0$.  If $C_{\be,0} \cap C_{\bv,0} = \emptyset$, then for sufficiently small $\ep$ the component $C_{\be,\ep}$ will also not intersect $C_{\bv,\ep}$ and so the convergence in normal cycle follows.

If the sets  $C_{\be,0}$ and $C_{\bv,0}$ do in fact intersect, then note that $C_{\be,0} \cap C_{\bv,0} = C_{\bv,0}$ (since the vertex is a connected component over a point, and the edges is a connected component over an interval which overlaps that point).

The intersection $C_{\be,\ep} \cap C_{\bv,\ep}$ is a vertical line segment.  In fact it is either the right or left edge of $C_{\bv,\ep}$.  However, since $C_{\be,\ep}$ converges to a vertical line segment $C_{\be,0}$, it follows that its left right and right edges converge to the same line segment.  Therefore  (\ref{eq: limint}) follows from (\ref{eq: limnoise}).

\smallskip

\noindent {\bf Proof of (\ref{eq: limreg})} We again plan to use the Approximation Theorem. The condition (1) of Theorem \ref{thm: fu} is plainly satisfied while condition (2) follows from  Corollary \ref{cor: piece curv} and   the   explicit description of the mass of the normal cycle  of a planar set given in \cite[Chap. 23]{Mor}.    All that is left to do is to verify condition (3)  of the Approximation Theorem.

  The component   $C_{\bse, 0}$ is  an elementary region  defined  by continuous  semialgebraic functions 
  \[
  \beta_\bse,\tau_\bse:[a,b]\ra \bR.
  \]
  More precisely, this means  that 
\[
\beta_\bse(x)\leq \tau_\bse(x),\;\; \forall x\in [a,b],
\]
and
\[
C_{\bse,0}=\bigl\{ (x,y)\in \bR^2;\;\;x\in [a,b],\;\; \beta_\bse(x)\leq y\leq \tau_\bse(x)\,\bigr\}.
\]
There exists an integer $n>0$ and points 
\[
a=c_0<c_1<\cdots <c_n=b
\]
 such that for any  $i=1,\dotsc, n$ the restrictions of $\beta_\bse$ and $\tau_\bse$ to $(c_{i-1}, c_i)$ are real analytic. Moreover, since the set $S$ is generic,  the   derivatives $\beta_\bse'$ and $\tau_\bse'$ are bounded   near $c_1,\dotsc, c_{n-1}$. In particular,  the functions $\beta_\bse$ and $\tau_\bse$ are  locally Lipschitz  on  the open interval $(a,b)$. We will refer to the points
 \[
 \bigl(c_j,\beta_\bse(c_j)\,\bigr),\;\;\bigl(c_j,\tau_\bse(c_j)\,\bigr),\;\;j=0,1,\dotsc, n,
 \]
 as the \emph{vertices} of $C_{\bse,0}$.  The other points on these graphs are called \emph{regular}. Now fix a  constant $c\in\bR$ and a linear map $\xi: \bR^2\ra \bR$, $\xi(x,y)=ux+vy$, with the following generic properties:
 
 \begin{itemize}
 
 \item[$\mathbf{G_1}.$] The restriction of $\xi$ to $C_{\bse,0}$ is a stratified Morse function, and $v\neq 0$, i.e., the level sets of $\xi$ are not vertical lines.
 
 \item[$\mathbf{G_2}.$] The constant $c$ is not a critical value of $\xi|_{C_{\bse,0}}$.
 
 \item[$\mathbf{G_3}.$] The line $L_{\xi,c}:=\{\xi=c\}$ does not contain any of the vertices of $C_{\bse, 0}$.
 
 \end{itemize}
 
 We will show that
 \begin{equation}
 \lim_{\ep \searrow 0}\chi\Bigl(\, C_{\bse,\ep}\cap \{\xi\geq c\,\}\,\Bigr)=\chi\Bigl(\, C_{\bse,0}\cap \{\xi\geq c\,\}\,\Bigr).
 \label{eq: final-conv1}
 \end{equation}
 For $\ve>0$  sufficiently small we set
 \begin{equation}
 a_\ep:=a+\nu(\ep)\ep,\;\; b_\ve:=b-\nu(\ep)\ep,\;\;C_{\bse,0}^\ve=C_{\bse ,0}\cap \bigl([a_\ve,b_\ve]\times \bR\,\bigr).
 \label{eq: cve0}
 \end{equation}
 Above we assume that $\ep$ is small enough so that $a_\ep<b_\ep$.

 Let us observe that the conditions $\mathbf{G_1}$,$\mathbf{G_2}$ and $\mathbf{G_3}$ imply that for  $\ep$ sufficiently small we have 
 \[
 \chi\Bigl(\, C_{\bse,0}\cap \{\xi\geq c\,\}\,\Bigr)=\chi\Bigl(\, C_{\bse,0}^\ep\cap \{\xi\geq c\,\}\,\Bigr).
 \]
 Thus, to prove  (\ref{eq: final-conv1}) it suffices to show that
 \begin{equation}
 \chi\Bigl(\, C_{\bse,\ep}\cap \{\xi\geq c\,\}\,\Bigr)=\chi\Bigl(\, C_{\bse,0}^\ep\cap \{\xi\geq c\,\}\,\Bigr),\;\;\forall \ep\ll 1.
 \label{eq: final-conv2}
 \end{equation}

 The region $C^\ve_{\bse,0}$ is an elementary region  defined by the $PL$ functions
 \[
 \beta_{\bse,\ep},\tau_{\bse,\ep}: [a_\ep,b_\ep]\ra \bR,\;\;\beta_{\bse,\ep}(x)\leq\tau_{\bse,\ep}(x),\;\;\forall  x\in [a_\ep, b_\ep].
 \]
It is the part of the component  $C_{\be,0}$  outside the $\ve$-noise strips.

We need to develop some terminology to handle the intersections of these $PL$ boundary functions.  If $f:[s,t]\ra \bR$ is a piecewise $C^2$ function, then we say  that the line \emph{$L_{\xi,c}$ intersects the graph  of $f$  transversally} at a point $\bp_0= (x_0,f(x_0))$ if  there exists a $\delta>0$ such that  the function 
 \[
 x\mapsto  \xi(x,f(x))
 \]
 is  differentiable  on the set $0<|x-x_0| \leq \delta$  and its derivative has  constant  sign on this set.   We will denote by $\sign(\bp_0, f)\in \{\pm 1\}$ this sign. Thus, if $\sign(\bp_0, f)=1$, then the  curve  
 \[
 x\mapsto (x,f(x)), \;\;|x-x_0|\leq \delta, 
 \]
  intersects  the line $L_{\xi,c}$ at $\bp_0$ coming from the half-plane $\{\xi<c\}$ and entering the half-plane $\{\xi>c\}$.
  
  For any point $\bp\in\bR^2$ and any $r>0$ we denote by $\Sigma_r(\bp)$ the closed square
  \[
  \Sigma_r(\bp):=\bigl\{\, (x,y)\in\bR^2;\;\; |x-x(\bp)|\leq r,\;\;|y-y(\bp)|\leq r\,\bigr\}.
  \]
 We need to discuss separately three cases.

 \medskip
 
  \begin{figure}[ht]
\centering{\includegraphics[height=2in,width=3in]{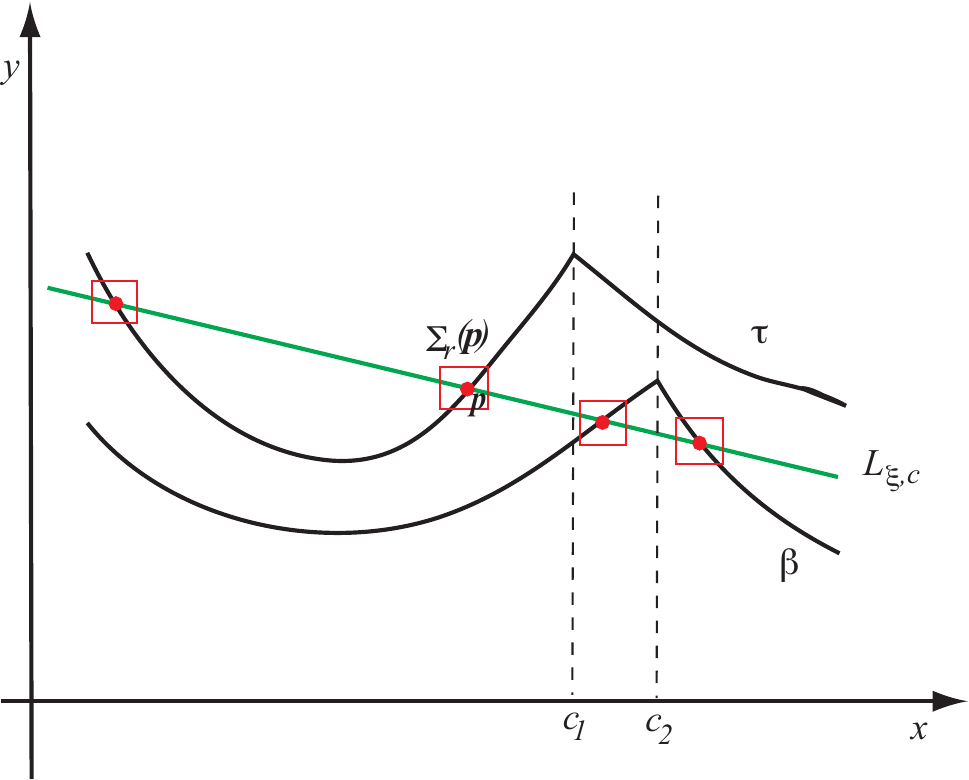}}
\caption{\sl Isolating the intersection points of $L_{\xi,c}$ with the graphs of $\beta$ and $\tau$ so that the squares $\Sigma_r(\bp)$ do not touch any of the vertical lines containing  the  singular points of these graphs.}
\label{fig: isolate}
\end{figure}
 
 \noindent {\bf Case 1.} \emph{The elementary set $C_{\bse,0}$ is nondegenerate}, i.e., $\beta_\bse(x) <\tau_\bse(x)$, $\forall x\in (a,b)$.  The conditions $\mathbf{G_1},\mathbf{G_2}, \mathbf{G_3}$ imply that there exists  a compact subinterval $I=[a_*,b_*]\subset (a,b)$ such that  the  line  $L_{\xi,c}$ intersects   the graphs  of $\beta_\bse$  and $\tau_\bse$ transversally in  \emph{regular} points  on these graphs whose $x$-coordinates are contained in the interval $I$.  Denote by $\bsI_\beta^0$ (resp. $\bsI_\tau^0$) the  intersection of $L_{\xi,c}$ with the graph  of $\beta_\bse$ (resp. $\tau_\bse$.)  The superscript $0$ in $\bsI_\beta^0$ comes from our convention that $S = S_0$.   Finally we set 
 \[
 \bsI^0 := \bsI_\beta^0 \cup \bsI_\tau^0
 \]
 
 Fix a small   positive real number  $r$ with the following properties (see Figure \ref{fig: isolate}). 
 
 \begin{itemize}
 
 \item  The  closed squares  $\Sigma_r(\bp)$, $\bp\in\bsI^0$ are pairwise disjoint.
 
 \item   For each point $\bp\in \bsI^0$, there exists  $i=0,1,\dotsc, n$ such that the projection  of $\Sigma_r(\bp)$ onto the $x$-axis  is contained in a compact sub-interval $J_{\bp}\subset (c_{i-1}, c_i)$.
 
 \end{itemize}
 
 We set
 \[
 \Sigma_r(\bsI_\beta^0):= \bigcup_{\bp\in\bsI_\beta} \Sigma_r(\bp),\;\;\Sigma_r(\bsI_\tau^0):=\bigcup_{\bp\in\bsI_\tau} \Sigma_r(\bp).
 \]

\begin{lemma} (a)  Denote by $\bsI_\beta^\ep$ the intersection of $L_{\xi,c}$ with the graph of $\beta_{\bse,\ep}$.     There exists $\ep_\beta>0$  with the following properties. 

\begin{itemize}
\item[(a1)] For any $\ep\leq \ep_\beta$  we have
\[
\bsI_\beta^\ep\subset \Sigma_r(\bsI_\beta),
\]
\item[(a2)] For any  $\bp\in \bsI_\beta^0$    and any $\ep\leq \ep_\beta$  the line  $L_{\xi,c}$ intersects the portion of the graph of $\beta_{\bse,\ep}$ inside $\Sigma_r(\bp)$ in a unique point $\bp(\ep)$.  This intersection  is transversal and
\begin{equation}
\sign(\bp, \beta_{\bse,0})=\sign(\bp(\ep),\beta_{\bse,\ep}).
\label{eq: sign=}
\end{equation}
\end{itemize}

(b) Similar  statements are true with the  functions $\beta_{\bse,\ep}$ replaced by the top functions $\tau_{\bse,\ep}$.

\qed
\label{lemma: cross}
\end{lemma}

 We defer the proof of this result to the end of this section.   
 
 Set $\bsI^\ep=\bsI^\ep_\beta\cup \bsI^\ep_\tau$.  We will refer to the intersection  of $L_{\xi,c}$ with $\pa C^\ep_{\bse,0}$   as the $0$-crossing set   and, for $\ep > 0$, we we will refer to the   intersection of $L_{\xi,c}$ with $\pa C_{\bse,\ep}$ as the $\ep$-crossing set.   (The set $C^\ve_{\bse,0}$ is defined in (\ref{eq: cve0}).)   For $\ep\geq 0$ we denote by $\bsK_\ep$ the $\ep$-crossing set.
 
 Observe that the set $\bsI^\ep$  is contained in $\bsK_\ep$  but the  $\ep$-crossing set may contain additional points, namely, the intersection of $L_{\xi,c}$ with the vertical lines $x=a_\ep,b_\ep$.  The Hausdorff distance between the $\bsK_\ep$ and $\bsK_0$   goes to  zero as $\ep\searrow 0$.  Moreover, Lemma \ref{lemma: cross} implies that for any $\ep$ sufficiently small there exists a bijection
 \[
 \Psi_\ep: \bsK_0\ra \bsK_\ep
 \]
 defined by 
 \[
 \Psi_\ep(\bp) =\bsK_\ep\cap\Sigma_r(\bp).
 \]
 For $\ep>0$ we denote by $C_{\ep}^+$ the intersection of $C_{\bse,\ep}$ with the half-plane $\{\xi\geq c\}$. Similarly, we define $C_0^+$ to be the intersection of $C^\ep_{\bse,0}$ with the same half-plane.   We have to prove that
 \begin{equation}
 \chi(C_\ep^+)=\chi(C_0^+),\;\;\forall \ep\ll 1.
 \label{eq: final-conv3}
 \end{equation}
 For $\ep\geq 0$ the connected components   of $C_{\ep}^+$ are all homeomorphic to closed $2$-dimensional disks so that  the Euler characteristic  of $C_{\bse,\ep}^+$ is equal to the number of boundary components  of  $\pa C^+_\ep$.
 
 Observe that if $\bsK_0=\emptyset$, then $\bsK_\ep=\emptyset$ for all $\ep$ sufficiently small. In this case  $C_{\bse,\ep}^+$  is homeomorphic to a closed disk for all $\ep$ sufficiently small and  (\ref{eq: final-conv3}) is obviously true. We need to investigate the case $\bsK_0\neq \emptyset$.
 
 For $\ep> 0$ we define an equivalence relation $\sim_\ep$  on $\bsK_\ep$ by declaring $\bp\sim_\ep \bq$ if and only if $\bp$ and $\bq$ belong   to the same component of $\pa C_\ep^+$. Similarly, we define an equivalence relation $\sim_0$ on $\bsK_0$  by declaring   $\bp\sim_0\bq$ if and only $\bp$ and $\bq$ belong to the same connected component of    $\pa C_0^+$.  Thus, for $\ep\geq 0$ the number of connected components of $\pa C_\ep^+$ is  equal to the number of equivalence classes of $\sim_\ep$. To prove the equality (\ref{eq: final-conv3}) it suffices to show that 
 \begin{equation}
 \bp\sim_0\bq \Rightarrow \Psi_\ep(\bp)\sim_\ep\Psi_\ep(\bq).
 \label{eq: equiv}
 \end{equation}
 Indeed, if (\ref{eq: equiv}) holds, then  we deduce  that the number of equivalence classes  of $\sim_\ep$ is     not larger than the number of equivalence classes of $\sim_0$.   Since the number of connected components of $C_0^+$ is independent of $\ep$ if $\ep$ is small   and
 \[
 \dist(C_\ep^+,C_0^+)\ra 0\;\;\mbox{as}\;\;\ep\searrow 0
 \]
 we deduce that $C_{\bse,\ep}^+$ has at least as many components as $C_0^+$.
 
 Fix a component $R$ of $C_0^+$ and $\bp,\bq\in \pa R$.    We denote by  $[\bp,\bq]_R$ the   arc  of $\pa R$ obtained by traveling counterclockwise   from $\bp$ to $\bq$. Along this arc there could be additional crossing points $\bp_0=\bp, \dotsc, \bp_k=\bq\in \bsK_0$, arranged in counterclockwise order.  We set
 \[
 \bp_j^\ep:=\Psi_\ep(\bp_j).
 \]
 Each of the arcs $[\bp_{j-1},\bp_j]_R$ is of one of the following  two types:
 
 \begin{itemize}
 \item[I.]  A  line segment contained in  $L_{\xi,c}$.
  
 \item[II.]   A sub-arc   of  $\pa C^\ep_{\bse,0}$  that intersects $L_{\xi,c}$ only at its endpoints. 
 \end{itemize}
 
 \begin{figure}[ht]
\centering{\includegraphics[height=2in,width=3in]{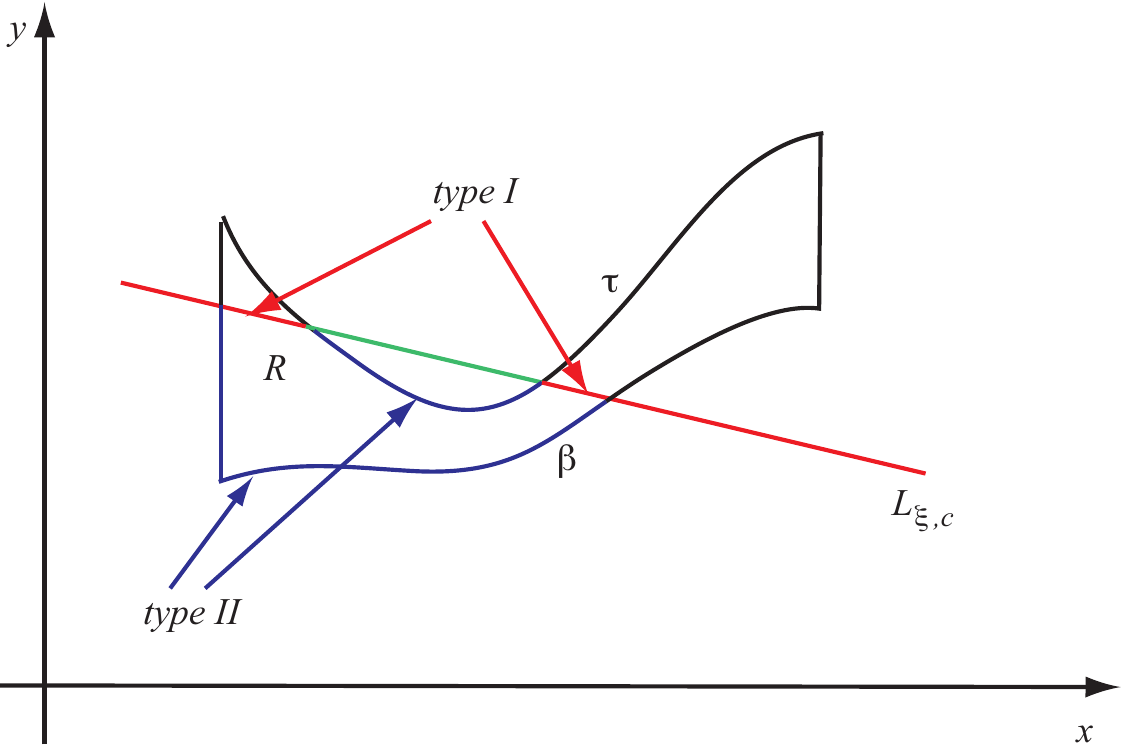}}
\caption{\sl An elementary region $S(\beta,\tau)$ cut  by a line $L_{\xi,c}$. The intersection of this region with the  lower half-plane  determined by $L_{\xi,c}$ has one component $R$ whose  boundary is decomposed in arcs of two types.}
\label{fig: elem-slice}
\end{figure}

 If $[\bp_{j-1}, \bp_j]_R$ is of type  I,  so that it is contained in $L_{\xi,c}$,   then  the points $\bp_{j-1}^\ep$ and $\bp_j^\ep$ are also contained in $L_{\xi,c}$  and we denote by $[\bp_{j-1}^\ep, \bp_j^\ep]_R$ the oriented line segment going from  $\bp_{j-1}^\ep$ to $\bp_j^\ep$. Clearly 
 \[
 p_{j-1}^\ep\sim_\ep\bp_j^\ep.
 \]
Suppose now that $[\bp_{j-1}, \bp_j]_R$ is of type  II.   The points $\bp_{j-1}^\ep$ and $\bp_j^\ep$ divide  the  boundary $\pa C_{\bse,\ep}$ into two arcs, one of which  approaches $[\bp_{j-1}, \bp_j]_R$ in the Hausdorff distance as $\ep\ra 0$. We denote this arc by $[\bp_{j-1}^\ep, \bp_j^\ep]_R$. Lemma \ref{lemma: cross} implies that the arc $[\bp_{j-1}^\ep,\bp_j^\ep]_R$ intersects $L_{\xi,c}$ only at its end points if $\ep$ is sufficiently small. For such $\ep$'s  the arc $[\bp_{j-1}^\ep,\bp_j^\ep]_R$ lies on the same side of $L_{\xi,c}$  as $[\bp_{j-1}, \bp_j]_R$ so that  $p_{ji1}^\ep\sim\bp_j^\ep$.    By transitivity we now deduce that 
\[
\Psi_\ep(\bp)=\bp_0^\ep\sim_\ep \bp_k^\ep=\Psi_\ep(\bq).
\]
This proves (\ref{eq: equiv}) and  thus proves (\ref{eq: final-conv2}) in the case when  the elementary set $C_{\bse,0}$ is nondegenerate.

\medskip

\noindent {\bf Case 2.} \emph{The elementary set $C_{\bse,0}$ is degenerate}, i.e., $\beta_{\bse,0}=\tau_{\bse,0}$. We denote by $\bsJ^0$ the set consisting of the  endpoints of the graph of $\beta_{\bse, 0}$ and the intersection of this graph with $L_{\xi,c}$. Similarly, denote by $\bsJ^\ep_\beta$ (resp. $\bsJ^\ep_\tau$) the set consisting of the endpoints of the graph of $\beta_{\bse,\ep}$ (resp. $\tau_{\bse,\ep}$)  and the intersection of this graph  with the line $L_{\xi,c}$.  As in  {\bf Case 1} we  can invoke Lemma \ref{lemma: cross} to obtain bijections
\[
\Psi_\ep^\beta:\bsJ^0\ra \bsJ^\ep_{\beta},\;\;\Psi_\ep^\tau:\bsJ^0_+\ra \bsJ^\ep_{\tau}.
\]
We continue to use the notations $C_0^+$ and $C_{\ep}^+$  introduced in the proof of {\bf Case 1}.    In this case $C_0^+$ is a finite union of subarcs  of the graph of $\beta_{\bse,0}$.      Let these arcs  be $A_1,\dotsc, A_k$.   Each of these arcs  carry a natural orientation.  Denote by $\bp_j$ the initial  point of $A_j$ and by $\bq_j$ the final point   of $A_j$. We set
\[
\bp_j^\beta(\ep):= \Psi^\beta_\ep(\bp_j),\;\; \bp_j^\tau(\ep):= \Psi^\tau_\ep(\bp_j).
\]
We define $\bq_j^\beta(\ep)$ and $\bq_j^\tau(\ep)$ in a similar  fashion. Consider the simple closed curve $A_j^\ep$  which is the union of  four arcs (see Figure \ref{fig: elem-slice1}).

\begin{itemize}

\item The line segment from $\bp_j^\tau(\ep)$ to $\bp^\beta_j(\ep)$.

\item The arc of $\beta_{\bse,\ep}$ from $\bp^\beta_j(\ep)$ to $\bq_j^\beta(\ep)$.

\item The line segment from   $\bq_j^\beta(\ep)$ to  $\bq_j^\tau(\ep)$.

\item The arc of $\tau_{\bse,\ep}$  from  $\bq_j^\tau(\ep)$ to  $\bp_j^\tau(\ep)$.

\end{itemize}

\begin{figure}[ht]
\centering{\includegraphics[height=1.5in,width=3.6in]{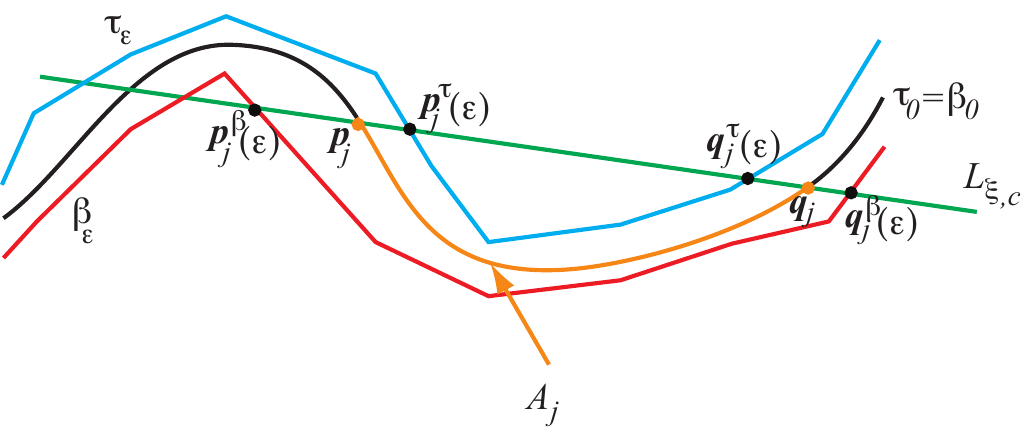}}
\caption{\sl The arc $A_j$ and the simple closed curve $A_j^\ep$, $\bp_j^\tau(\ep)\ra\bp_j^\beta(\ep)\ra \bq_j^\beta(\ep)\ra \bq_j^\tau(\ep)\ra \bp_j^\tau(\ep)$.}
\label{fig: elem-slice1}
\end{figure}

Lemma \ref{lemma: cross} implies that for $\ep$ sufficiently small the  closed curve $A_j^\ep$ is contained entirely in the half-plane $\{\xi\geq c\}$ so the   bounded region it surrounds is contained in this half-plane as well.  The  region  $C_{\ep}^+$ consists precisely of the regions surrounded by  the closed curves $A_j^\ep$, $j=1,\dotsc, k$ so that $\chi(C_0^+)=\chi(C_\ep^+)=k$ for all $\ep$ sufficiently small. This  proves (\ref{eq: final-conv2}) in  {\bf Case 2}.

\medskip

\noindent {\bf Case 3.} \emph{$C_{\bse,0}$ is a mixed elementary set}.   It has a \emph{minimal} good partition
\[
a=t_0<t_1<\cdots <t_n=b,\;\;n\geq 2,
\]
where for   each $j=1,\dotsc, n$ the intersection  of $C_{\bse,0}$ with the  strip $[t_{j-1},t_j]\times \bR$ is  either degenerate or nondegenerate. The intersection  of the graphs of $\beta$ and $\tau$ with each of the vertical lines $x=t_j$, $j=0,\dotsc ,n$, is  a singular point of $C_{\bse, 0}$; see Figure \ref{fig: eul-part}. Since the $c$ is not a critical value of the restriction of $\xi$ to $C_{\bse,0}$, we conclude that the line $L_{\xi, c}$ does not contain any of these  singular points.

  \begin{figure}[ht]
\centering{\includegraphics[height=1.3in,width=3.8in]{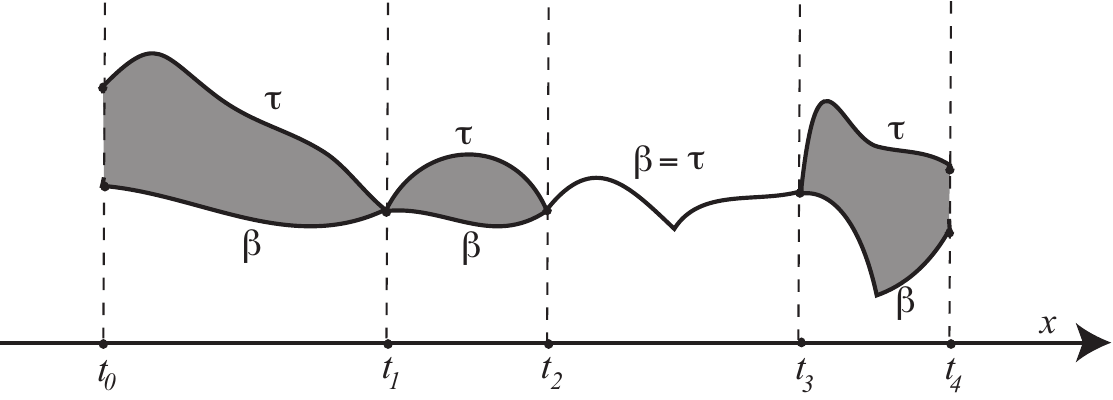}}
\caption{\sl The minimal good partition of a mixed elementary sets.}
\label{fig: eul-part}
\end{figure}

Set $H^+_{\xi,c}:=\{\xi\geq c\}$. For $j=1,\dotsc, n$ and $\ep>0$   we set
\[
R_i:= \bigl([t_{j-1},t_j]\times \bR\bigr)\cap C^\ep_{\be, 0},\;\;R_{i,\ep}:= \bigl([t_{j-1},t_j]\times \bR\bigr)\cap C_{\be, \ep}.
\]
For $k=1,\dotsc, n-1$  and $\ep>0$ we set
\[
V_k:= \{x=t_k\}\cap C^\ep_{\be, 0},\;\; V_{k,\ep}:=  \{x=t_k\}\cap C_{\be, \ep}.
\]
To prove  (\ref{eq: final-conv2}) it suffices to show that
\begin{subequations}
\begin{equation}
\chi(R_j\cap H^+_{\xi,c})=\chi(R_{j,\ep}\cap H^+_{\xi,c}),\;\;\forall \ep\ll1,\;\;j=1,\dotsc, n,
\label{eq: final-conv3a}
\end{equation}
\begin{equation}
\chi(V_k\cap H^+_{\xi,c})=\chi(V_{k,\ep}\cap H^+_{\xi,c}),\;\;\forall \ep\ll1,\;\;k=1,\dotsc, n-1,
\label{eq: final-conv3b}
\end{equation}
\end{subequations}

The equalities (\ref{eq: final-conv3a}) follow from the  Cases 1 and 2 investigated above.  The equalities   (\ref{eq: final-conv3b})    are consequences of  the following   simple facts.

\begin{itemize}

\item  For any $k=1,\dotsc, n-1$,  the set   $V_k$ consists of a single point  that does not line on the line $L_{\xi,c}$.

\item  For any $k=1,\dotsc, n-1$, and $\ep>0$  the set   $V_{k,\ep}$ consists of a single  vertical line segment.

\item  For any $k=1,\dotsc, n-1$, the set $V_{k,\ep}$ converges in the Hausdorff metric to the set $V_k$. In particular, for $\ep\ll1$ we have
\[
V_k\subset  H^+_{\xi,c} \Llra  V_{k,\ep}\subset  H^+_{\xi,c}.
\]
\end{itemize}

This  completes the proof of  Theorem \ref{th: main}.

\end{proof}

 \noindent {\bf Proof of Lemma \ref{lemma: cross}.} The inclusion (a1) follows from the fact that the  distance between the graph of $\beta_{\bse,\ep}$ and the graph of $\beta_{\bse,0}$ approaches zero as $\ep\ra 0$. To prove  (a2) let us denote by $(x_0,y_0)$ the coordinates of $\bp$.  From the choice of $r$ we deduce that  for  $\ep$ sufficiently small the interval
 \[
 J_\ep:= [x_0-r-\si(\ep)\ep, x_0+r+\si(\ep)\ep]
 \]
 is contained entirely in an interval of the form $(c_{j-1},c_j)$  for some $j=1,\dotsc, n$ (where $c_j$ were defined much earlier in the proof to be the $x$-coordinates such that either $\beta_\ep$ or $\tau_\ep$ fail to be real analytic) so that $\beta_{\bse,0}$ is $C^2$ on  $J_\ep$.  We set
 \[
 K_1=\sup_{x\in J_\ep} |\beta'_{\bse,0}(x)|,\;\;K_2=\sup_{x\in J_\ep} |\beta''_{\bse,0}(x)|.
 \]
We denote by $m_\xi$ the slope of $L_{\xi,c}$ and by $m_0$ the slope of the tangent to the graph of $\beta_{\bse,0}$ at $\bp$,
\[
m_0=\beta'_{\bse,0}(x_0).
\]
Because $L_{\xi,c}$ intersects the graph of $\beta_{\bse,0}$ transversally at $\bp$ we deduce $m_0\neq m_\xi$. We deduce that for every $x\in J_\ep$ we have
\begin{equation}
|\beta'_{\bse,0}(x)-m_0|\leq K_2|x-x_0|.
\label{eq: var1}
\end{equation}
The    function $\beta_{\bse,\ep}$ is piecewise linear.    Consider  the portion of this graph
\[
\lan \bp_0^\ep,\dotsc, \bp_{\ell(\ep)}^\ep\ran
\]
 with the property that  $\bp_1^\ep,\dotsc, \bp^\ep_{\ell(\ep)-1}$ are successive  vertices on the graph of $\beta_{\bse,\ep}$ such that
 \[
 \lan \bp_1^\ep,\dotsc, \bp_{\ell(\ep)-1}^\ep\ran \subset \Sigma_r(\bp),\;\;\bp_0^\ep,\bp^\ep_{\ell(\ep)}\not\in \Sigma_r(\bp).
 \]
 Denote by $(x_j^\ep,y_j^\ep)$ the coordinates of $\bp^\ep_j$, $j=0,\dotsc,\ell(\ep)$, and set $z_j^\ep=\beta_{\bse,0}(x_j^\ep)$.
 Observe that
 \[
 |y_j^\ep-z_j^\ep|\leq (K_1+4)\ep,\;\;\forall j=0,\dotsc, \ell(\ep),\;\; \frac{1}{x_k^\ve-x^\ve_{k-1}}=O\left(\frac{1}{\ep\si(\ep)}\right),\;\;\forall 1\leq k\leq \ell(\ep).
 \]
 We deduce that
 \[
m_j^\ep:= \frac{y^\ep_j-y^\ep_{j-1}}{x_j^\ep-x_{j-1}^\ep}=  \frac{z^\ep_j-z^\ep_{j-1}}{x_j^\ep-x_{j-1}^\ep} +O\left(\frac{1}{\si(\ep)}\right),
 \]
 where the constant implied by the $O$-symbol is independent of $\ep$.  The mean value theorem implies that  the difference quotient in the right-hand side of the above equality is equal to the derivative of $\beta_{\bse,0}$ at a point $\eta_j^\ep\in (x_{j-1}^\ep,x_j^\ep)$. Using (\ref{eq: var1}) we deduce that
 \begin{equation}
 \left|m_j^\ep-m_0\right|= O\left( \frac{1}{\si(\ep)} +|x^\ep_{j-1}-x_0|+|x_j^\ep-x_0|\right).
 \label{eq: var2}
 \end{equation}
 Since $\si(\ep)\ra \infty$ we deduce that  given 
 \[
 \gamma <\min\left\{ r, \frac{1}{4}|m_\xi-m_0|\,\right\}
 \]
  there exist  constants  $\delta=\delta(\gamma)>0$  and $\ep(\gamma)>0$ such that, for any $\ep<\ep(\gamma)$ the segments of the graph of  $\beta_{\bse,\ep}$ situated in the strip $|x-x_0|\leq \gamma$ have slopes  $m_j^\ep$ located in the range $(m_0-\gamma, m_0+\gamma)$. In particular, none of these  slopes   can be equal to $m_\xi$, and they are all situated on the same  side of $m_\xi$ as $m_0$.

If we fix $\gamma$ as above we can find  $\ep_1(\gamma)>0$ such that, for $\ep<\ep_1(\gamma)$ all the intersection points of $L_{\xi,c}$ with the graph of $\beta_{\bse,\ep}$ located in $\Sigma_r(\bp)$ are in fact located in the narrow vertical strip $|x-x_0|<\gamma$.  The above discussion then shows that all  these intersections must be transversal and  they all have the same sign, $\sign(\bp, \beta_{\bse,0})$.  Denote by $N_\ep(\gamma)$ the number of such intersections.  Set
\[
P_0^\pm= \bigl(\,x_0\pm\gamma, \beta_{\bse,0}(x_0\pm\gamma)\,\bigr),\;\;P_\ep^\pm = \bigl(\,x_0\pm \gamma, \beta_{\bse,\ep}(x_0\pm\gamma)\,\bigr).
\]
Consider  now the closed curve   $C^\ep_\gamma$ obtained  as follows.

\begin{itemize}
\item Travel from the point $P_\ep^-$ to  $P_\ep^+$ along the graph  of $\beta_{\bse,\ep}$ .

\item Next, travel   on  the vertical segment connecting  $P_\ep^+$ to $P_0^+$.

\item Travel along the graph of $\beta_{\bse,0}$ from $P_0^+$ to $P_0^-$.

\item  Finally, travel  along the vertical segment connecting $P_0^-$ to $P_\ep^-$.

\end{itemize}

The above  discussion shows that the intersection number between the line $L_{\xi,c}$ and the curve $C_\gamma^\ep$ is $\pm \bigl(\,N_\ep(\gamma)-1\,\bigr)$. On the other hand, since this curve is   homologically trivial we deduce  that the intersection number $L_{\xi,c}$ and $C_\gamma^\ep$ is $0$.  Therefore we conclude that $N_\ep(\gamma) = 1$, which completes the lemma in the case of the function $\beta_{\bse,\ep}$.  

The above proof can be repeated replacing $\beta_{\bse,\ep}$ with $\tau_{\bse,\ep}$ for the upper boundary case. \qed

\appendix

\section{Semialgebraic geometry}
\label{s: a}
\setcounter{equation}{0}

A set   $X\subset \bR^n$ is  called \emph{semialgebraic}  if  it can be written as a finite union
\[
X= X_1\cup \cdots \cup X_N,
\]
where each of the sets $X_i$ is described by a  finite system of polynomial   inequalities.     

A map $F:X_0\ra X_1$ between two semialgebraic sets $X_i\in \bR^{n_i}$, $i=0,1$, is called \emph{semialgebraic}  if  its graph  $\Gamma_F$ is a semialgebraic   subset of $\bR^{n_0+n_1}$.

Here is a list of basic properties of  semialgebraic sets and functions. For proofs and more details we refer to \cite{BCR, Co, Dr}.

\begin{itemize}

\item The union, the intersection   and the Cartesian product of two semialgebraic sets  are semialgebraic.

\item  If $X,Y$ are semialgebraic subsets     of $\bR^n$ then so is their difference.

\item     A subset of $\bR$ is semialgebraic if and only if  it is a finite union of open interval and points.

\item (\emph{Tarski-Seidenberg})   The image and preimage of a semialgebraic set via a semialgebraic  map are semialgebraic sets.

\item  If  $I$ is an interval of the  real axis   and $f: I\ra \bR$ is semialgebraic, then there exists a finite subset $F\subset I$ such that the restriction of $F$ to any component of $I\setminus F$ is monotone and real analytic.

\item (\emph{Curve selection}) If $X$ is a  semialgebraic subset of $\bR^n$ and $x_0\in \cl(X)\setminus X$, then there exists a continuous semialgebraic map $\gamma: (0,1)\ra X$ such that
\[
\lim_{t\searrow 0} \gamma(t)=x_0.
\]

\item (\emph{{\L}ojasewicz' inequality})  Suppose that $X$ is a compact semialgebraic  set and $f, g:X\ra \bR$ are continuous semialgebraic   functions such that
\[
\{f=0\}\subset \{ g=0\}.
\]
Then there exists a positive integer $N$ and a positive real number $C$ such that
\begin{equation}
|g(x)|^N\leq C|f(x)|,\;\;\forall x\in X.
\label{Loja}
\end{equation}

\item Suppose that $X$ is a compact  semialgebraic  set and $f: X\ra \bR$ is a continuous semialgebraic function.   Then the function $\bR\ra \bZ$ that associates to each $t\in \bR$ the Euler characteristic  of the level set $\{f=t\}$ is a semialgebraic function.

\item A semialgebraic set is connected if and only if  it is path connected.

\item  A semialgebraic set has finitely many  connected components and each of them is also a semialgebraic set.

\end{itemize}

\noindent {\bf Proof of Proposition \ref{prop: pl-approx}}  We prove only the statement about  the total curvature. The statement  about the   perimeter   follows the same pattern  and has fewer complications.  First some terminology.

A continuous function  $f: [a,b]\ra \bR$ is said to be piecewise $C^2$ if  there exists a finite set
\[
S=\bigl\{a=s_0<s_1<\cdots < s_\ell=b\,\bigr\},
\]
such that  for any $j=1,\dotsc, \ell$, and any $k=1,2$  the restriction of $f$ to the open interval $(s_{j-1},s_j)$ is a $C^2$ function and the limits
\[
\lim_{x\searrow s_{j-1} }f'(x),\;\;\lim_{x\nearrow s_j} f'(x)
\]
exist. Note that the  last condition implies that  as $s\to s_j\pm 0$ the oriented  tangent space  to the graph of $f$ at $(s,f(s))$ has a limit  in the Grassmanian of oriented one-dimensional subspaces of $\bR^2$.

We  say that  the arc   $\bsC$ is \emph{convenient}   if there exists a piecewise $C^2$-function $f:[a,b]\ra \bR$  and  an orthonormal  system of coordinates $(\bar{x},\bar{y})$ on $\bR^2$ such that   
\[
\bsC=\bigl\{\, (\bar{x},\bar{y});\;\;\bar{y}=f(\bar{x}),\;\;\bar{x}\in [a,b]\,\bigr\}.
\]
  When  $\bsC$ is  convenient, Proposition \ref{prop: pl-approx}  is a special case of  \cite[Prop. 3.6]{Row}.

To deal with the general case let us observe that since $\bsC$ is  \emph{semialgebraic  arc}, for any point $\bp\in C$ there exists a closed disk $D$ centered at $\bp$ such that the intersection $D\cap C$  is  a convenient arc. (As $\bar{x}$-axis we can choose any line that is not perpendicular to the lines in the tangent cone to $\bsC$ at $\bp$ described in Definition \ref{def: tcone}. ) 

Since any continuous semialgebraic function is piecewise $C^2$ we  deduce  there exists an ordered  sampling  of $\bsC$
\[
\eQ=\bigl\{ Q_0,\dotsc, Q_N\,\bigr\}
\]
with the following properties.

\begin{itemize}

\item[(a)] The arc $\bsC$ starts at $Q_0$ and ends at $Q_N$.

\item[(b)]   The arc $\bsC$   is smooth at each of the points $Q_1,\dotsc, Q_{N-1}$.

\item[(c)] For any $j=1,\dotsc, N$, the portion of $\bsC$ between $Q_{j-1}$ and $Q_j$  is convenient. We denote by  $\bsC_j$ this portion.

\end{itemize}

Denote by $\eP_\ep^j$ the ordered sampling of $\bsC_j$  determined by the points in $\eP_\ep$ contained in  $\bsC_j$. We denote by $K_\ep^j$ the total curvature of the $PL$-curve $\bsC(\eP_\ep^j)$.      Since each of the curves $\bsC_j$ is convenient we have
\[
\lim_{\ep\searrow 0} K_\ep^j= K(\bsC_j),\;\;\forall j=1,\dotsc, N,
\]
so that
\[
\lim_{\ep\searrow 0} \sum_{j=1}^N K_\ep^j=K(\bsC).
\]
On the other hand, since  $\bsC$ is smooth at the points $Q_1,\dotsc, Q_{N-1}$ we deduce that 
\[
\lim_{\ep\searrow 0}\left( K_\ep- \sum_{j=1}^N K_\ep^j\right)=0.
\]
\qed

\section{The approximation algorithm}
\label{s: b}
\setcounter{equation}{0}

In this section we  give a more formal description of the approximation algorithm.  

Assume that $S$ sits on a screen consisting of $m\times m$ pixels so that $\ve=\frac{1}{m}$. We convert $P_\ep(S)$ into an $m \times m$ matrix $A$  of $1$'s and $0$'s, where $A[i,j]=1$ if and only if the pixel of center $c_{i,j}(\ep)$ touches $S$. 

Given this matrix $A$ we will generate a $PL$ set $S_\ep$ which approximates the original set $S$.  We will  assume that $\ep$ is fixed throughout the description of the algorithm.

The algorithm     depends on two parameters, both positive integers:  the spread   $\sigma$ and the noise width $\nu$ which we regard as functions of $m$.  These should 
 be chosen  so that 
\[
\lim_{m \to \infty } \frac{\sigma(m)}{m} = 0, \;\;\lim_{m\to\infty} \frac{(\sigma(m))^2}{m} = \infty,
\]
\[
\lim_{m\to \infty} \frac{\nu(m)}{m} = 0, \;\;\lim_{m\to\infty} \frac{\nu(m)}{ m^{1-\kappa_0}} = \infty,
\]
where $\kappa_0\in (0,1]$ is a constant dependent on $S$ introduced in Theorem \ref{thm: noisebound}.  However, for most applications  we can choose $\kappa_0=\frac{1}{2}$ and then
\[
\sigma\approx m^{\frac{1}{2}+ s},\;\;\nu\approx m^{\frac{1}{2}+r},\;\;s,r\in \left(0,\frac{1}{2}\right).
\]

The algorithm  uses several smaller subroutines.  The first subroutine $\stack$ obtains information about the various columns of $A$ which will be used to determine both the noise intervals as well as to select the vertices of $S_\ep$.  The input of $\stack$ is a list
\[
C = c_1, \dotsc c_m,\;\; c_i = 0,1,
\]
where $C$ is one of the columns of $A$.  The output  of $\stack$  is a  list  of  nonnegative integers
\[
\bn(C); \;\;b_1\leq t_1<b_2\leq t_2<\cdots <b_{\bn(C)}\leq t_{\bn(C)},
\]
where  $\bn(C)$ is the number of stacks in the column encoded by    $C$,   and the location of the bottom and top pixel in the $j$-th stack is determined by    the integers  $b_j, t_j$. More formally
\[
c_k=1 \Llra \exists 1\leq j\leq \bn(C):\;\; b_j\leq k\leq t_j.
\]
  If  $C=C_i$, the $i$-th column  of $A$, i.e.,
  \[
  C_i= a_{i,1},\dotsc, a_{i,m},
  \]
 then we will denote the output $\stack(C_i)$ by
 \[
 \bn_i,\;\; b_{i,1}\leq t_{i,1}<\cdots <b_{i,\bn_i}\leq t_{i,\bn_i}.
 \]
 A number $1\leq i\leq m-1$ is called a \emph{jump point} if
 \[
 \bn_i\neq \bn_{i+1}.
 \]
 
  \medskip
  
The next subroutine is called $\jump$.  Its input is an integer $k\in [1, m)$  and the  output is an integer $j_k=\jump(k)$ where $j_k$ is the next jump point, i.e., if
\[
\bigl\{ i\in [k,m)\cap\bZ;\;\; i\;\mbox{is a jump point}\,\bigr\}=\emptyset,
\]
 then we set
\[
\jump(k):= m+1.
\]
Otherwise
 \[
 \jump(k)=\min\bigl\{ i\in [k,m)\cap\bZ;\;\; i\;\mbox{is a jump point}\,\bigr\}.
 \]

\medskip 
 
Using these subroutines we can construct the \emph{noise regions} of the approximation.  These are simply the columns which are within $2\nu$ columns of a jump point.  Specifically we create a certain number of intervals:
\[
[\ell_1, r_1],\dotsc, [\ell_\alpha, r_\alpha]\subset [1,m]
\]
where the integers  $\ell_k, r_k$ are determined inductively as follows.
\[
\ell_1= \max\bigl(\, \jump(1)-2\nu, 1\,\bigr),\;\; r_1=\min\bigl(\, m,  \jump(1)+2\nu\,\bigr).
\]
Suppose that $\ell_1,r_1,\dotsc, \ell_j,r_j$ are determined. If $\jump(r_j)>m$    we stop. Otherwise  we set
\[
\ell_{j+1}= \max\bigl(\, \jump(r_j)-2\nu, 1\,\bigr),\;\;r_{j+1}=\min\bigl(\, m,  \jump(r_j)+2\nu\,\bigr).
\]
The intervals $[\ell_1,r_1],\dotsc ,[\ell_\alpha,r_\alpha]$ may not be disjoint,  but their union is a   \emph{disjoint} union of intervals
\[
[a_1, b_1],\dotsc, [a_J,b_J],\;\; b_i<a_{i+1}.
\]
The intervals $[a_j,b_j], 1\leq j\leq J$ are the  \emph{noise intervals}.  The intervals
\[
[1,a_1], [b_1,a_2],\dotsc, [b_{J-1},a_J], [b_J, m]
\]
are  the \emph{regular intervals}.

\medskip

Now that we have determined the noise and regular intervals, we can create the approximation $S_\ep$.  We do this with separate procedures on the noise or regular intervals.  In either case the approximation will be formed by (possibly degenerate) trapezoids whose bases are vertical. We call any set which is a union of finitely many such trapezoids a \emph{polytrapezoid}.  The approximations on the regular and noise intervals will both be polytrapezoids, and $S_\ep$ itself will also be a polytrapezoid.

First some notation. Given   a collection of  points
\[
B_0, T_0,\dotsc, B_N, T_N\in \bR^2
\]
such that
\[
x(B_i)= x(T_i),\;\;y(B_i)\leq y(T_i),\;\;\forall i=0,\dotsc, N,
\]
\[
x(B_{j-1})<x(B_{j}),\;\;\forall 1\leq j\leq N,
\]
  we denote by $\polygon(B_0, T_0, \dotsc, B_N, T_N)$ the   region surrounded by the simple closed $PL$-curve obtained    as the union of line segments
  \[
  [B_0, B_1],\dotsc, [B_{N-1}, B_N], 
  \]
  \[
  [B_N,T_N],\dotsc, [T_1,T_0], [T_0, B_0].
  \]
 Note that  each of the quadrilaterals $B_{i-1}B_iT_iT_{i-1}$ is a (possibly degenerate) trapezoid with vertical bases.
 
 \medskip
 
Consider first the regular intervals.   Given a regular interval $I:=[p,q]$  we observe that  the number of stacks $\bn_i$ is independent of $i\in [p,q]$. We denote this shared number by $\bn=\bn(I)$.

 We  construct inductively a sequence of numbers $i_0 <\cdots < i_N$  as follows: 
 \begin{itemize}
 
 \item We set $i_0=p$. 
 
 \item If $q-p<2\sigma$ we set $N=1$ and $i_1=q$. 
 
 \item If  $i_0,\dotsc, i_k$ are  already constructed, then,  if $q-i_k< 2\sigma$ we set $N=k+1$ and $i_{k+1}=q$,  else $i_{k+1}=i_k+\sigma$.
 \end{itemize}
 
 Note that if $q-p> \sigma$, then $N\geq 1$, $i_0=p$, $i_N=q$ and
 \[
 N=1\;\;\mbox{if}\;\; q-p<\sigma.
 \]
 We have
 \[
 \stack(C_{i_k}) = \bn, \;\;b_{i_k,1}, t_{i_k,1},\dotsc, b_{i_k, \bn}, t_{i_k, \bn}.
 \]
 For $j=1,\dotsc, \bn$, and $k=0,\dotsc, N$ we denote by $B_{k,j}$ the center  of the $\ep$-pixel corresponding to the element entry $b_{i_k,j}$ in the column $C_{i_k}$. Similarly we denote by $T_{k,j}$ the center of the pixel corresponding to the entry $t_{i_k,j}$ of the column $C_{i_k}$.  For $1\leq j\leq \bn(I),
$ we set
 \[
 \eP_j(I) := \polygon(B_{0,j}, T_{0,j}, \dotsc,   B_{N,j}, T_{N,j}).
  \]
  Define
 \[
 \eP(I)=\bigcup_{j=1}^{\bn(I)} \eP_j(I),\;\; \eP_{\mathrm{reg}}:=\bigcup_{I\;\mathrm{regular\; interval}} \eP(I).
 \]
 
\medskip

\noindent Suppose now that $I=[p,q]$ is a noise interval.  We  modify the column 
 \[
 C_p= a_{p,1},\dotsc, a_{p,m}
 \]
  to a column 
  \[
  C'_p= a'_{p,1},\dotsc, a'_{p,m},
  \]
  by setting
  \[
  a'_{p,k}:= \begin{cases}
  1, & \mbox{if}\;\sum_{i=p}^q a_{i,k}>0\\
  &\\
  0, & \mbox{if}\;\sum_{i=p}^q a_{i,k}=0.
  \end{cases}
  \]
  We apply the subroutine $\stack$ to the new column $C'_p$and the output is
  \[
  \stack(C'_p)= \bn(I),\;\;b_1\leq t_1<\cdots <b_{\bn(I)}\leq t_{\bn(I)}.
  \]
 For $j=1,\dotsc, \bn(I)$ we  set
 \[
 B_{0,j}:=A[p,b_j],\;\; T_{0,j}:= A[p,t_j],
 \]
 \[
 B_{1,j}:=A[q,b_j],\;\; T_{0,j}:= A[q,t_j],
 \]
 where  we recall that $A[i,j]$ is defined as the center of the pixel associated to $a_{i,j}$. Next,  for $j=1,\dotsc, \bn(I)$ we  define the rectangle 
 \[
 \eR_j(I) := \polygon(B_{0,j}, T_{0,j}, B_{1,j}, T_{1,j}),
 \]
 and we set
 \[
 \eR(I)= \bigcup_{j=1}^{\bn(I)} \eR_j(I),\;\;  \eP_{\rm noise}:=\bigcup_{I\;\mathrm{noise\; interval}}\eR(I).
 \]
 The output of the algorithm is the polytrapezoid
  \[
  \eP_\ep(A):=\eP_{\rm regular}\cup \eP_{\rm noise}.
  \]

\end{document}